\documentclass[12pt,a4paper]{article}

\usepackage{graphicx}
\usepackage{amsmath,amssymb,bm,amsthm,mathrsfs,amscd, mathtools, stmaryrd,url}
\usepackage{here}
\usepackage{fancyhdr}
\usepackage{tikz}
\usepackage{xcolor}
\usepackage{authblk}
\usepackage[hidelinks]{hyperref}
\usepackage[inline]{enumitem}
\usepackage{wrapfig}
\usetikzlibrary{calc}

\theoremstyle{plain}
\newtheorem{thm}{Theorem}[section]
\newtheorem{lem}[thm]{Lemma}
\newtheorem{cor}[thm]{Corollary}
\newtheorem{prop}[thm]{Proposition}

\newtheorem{claim}{Claim}

\theoremstyle{definition}
\newtheorem{dfn}[thm]{Definition}
\newtheorem{eg}[thm]{Example}

\theoremstyle{remark}
\newtheorem{rem}[thm]{Remark}

\numberwithin{equation}{section}

\allowdisplaybreaks

\DeclareMathOperator{\Ann}{Ann}
\DeclareMathOperator{\AG}{AG}

\DeclareMathOperator{\Cyc}{Cyc}
\DeclareMathOperator{\diag}{diag}

\DeclareMathOperator{\Ker}{Ker}

\DeclareMathOperator{\Min}{Min}
\DeclareMathOperator{\ord}{ord}
\DeclareMathOperator{\pr}{pr}

\DeclareMathOperator{\rowspan}{rowspan}
\DeclareMathOperator{\supp}{supp}
\DeclareMathOperator{\width}{width}

\newcommand{\abs}[1]{\left\lvert #1 \right\rvert}
\renewcommand{\Im}{\operatorname{Im}}

 \title{On Representing Matroids via Modular Independence}
\author[1]{Koji Imamura\thanks{k-imamura@kumamoto-u.ac.jp}}
\author[2]{Keisuke Shiromoto\thanks{keisuke@kumamoto-u.ac.jp}}

\affil[1]{Research and Education Institute for Semiconductors and Informatics, Kumamoto University}
\affil[2]{Graduate School of Science and Technology, Kumamoto University}
\date{} 
\begin{document}
  \maketitle
  \begin{abstract}
  We study a matrix-based notion of matroid representation
  over local commutative rings obtained by replacing 
  linear independence with modular independence.
  This construction always defines an independence system, though not necessarily a matroid.
  Under a mild nilpotent hypothesis,
  we show that chain rings are exactly the local rings
  for which the minimal number of generators is monotone
  on finitely generated submodules, and over commutative
  chain rings we obtain a criterion for the associated independence
  system to be a matroid.
  For codes over finite commutative chain rings,
  we identify puncturing with deletion,
  show that shortening agrees with contraction under a contractibility
  hypothesis, and establish duality for free codes. 
  We further derive bounds for simple and uniform matroids,
  prove that the uniform matroid $U_{2, n}$ is representable if and only if
  the size $n$ is at most the sum of the cardinalities of the local ring
  and its unique maximal ideal, and show that all excluded minors for
  $\mathbb{F}_{4}$-representability are representable over $\mathbb{Z}/4\mathbb{Z}$.
  The examples also include ring representations of matroids
  not representable over any field, such as the V\'{a}mos matroid
  over $\mathbb{Z}/8\mathbb{Z}$.
\end{abstract} 
\section{Introduction} \label{sect:introduction}

The matroid representation problem asks whether a given matroid $M$
is representable over a field $\mathbb{F}$, i.e.,
whether there exists a matrix $A$ over $\mathbb{F}$ whose
linearly independent sets of columns are exactly
the independent sets of $M$.
Although matroids abstract linear independence over a field,
P.~Nelson \cite{Nelson18} proved that almost all matroids are
not representable over any field.
A central theme in classical matroid theory is to determine,
for a fixed field $\mathbb{F}$, which matroids are $\mathbb{F}$-representable.
For the smallest finite fields, this problem admits striking excluded-minor
characterisations.
Binary matroids are exactly those with no $U_{2, 4}$-minor.
Ternary matroids are characterised by the excluded minors $U_{2, 5}$,
$U_{3, 5}$, $F_{7}$, and $F_{7}^{\ast}$.
The characterisation of $\mathbb{F}_{4}$-representable matroids,
a landmark result of Geelen, Gerards, and Kapoor \cite{ExcludedMinorGF4},
is achieved by the seven excluded minors $U_{2, 6}$, $U_{4, 6}$, $P_{6}$,
$F_{7}^{-}$, $(F_{7}^{-})^{\ast}$, $P_{8}$, and $P_{8}^{=}$.
These characterisations provide a natural benchmark for the comparison
in Section~\ref{sect:application}.
For larger fields, however, the excluded-minor discussion becomes
much less explicit; for instance, representability over $\mathbb{F}_{5}$
already leads to a substantially more complicated theory than
in the binary, ternary, and quaternary cases; see \cite{GGW14,Brettell25}.
For more details on classical matroid representation problems, see \cite[Section~6.5]{Oxley}.
Beyond matrix representations over fields and a classical alternative
based on algebraic independence, recent work has explored
representations over hyperfields \cite{BB18, BB19}.

In this paper, we extend the matrix-based construction of vector matroids
to the case of rings via modular independence.
Modular independence is introduced by Park \cite{Park09} for the ring $\mathbb{Z}_{m}$
of integers modulo $m$, and generalised to the class of Frobenius rings
by Dougherty and Liu \cite{DoughertyLiu09}. In this paper, we begin with
a local commutative (not necessarily Frobenius) ring,
where the definition of modular independence coincides with
that used for Frobenius rings.
This framework yields matrix representations over rings
for several matroids that are not representable over any field.

Throughout, “matroids over a ring” refers to our matrix-based framework
using modular independence. This should not be confused with
the module-valued notion of matroids over a ring introduced by
Fink and Moci \cite{FinkMoci16}.
Fink-Moci's ``matroids over a ring'' are a module-valued
generalisation of matroids: to each subset $X \subseteq E$ one assigns
a finitely generated $R$-module $M(X)$ satisfying exactness/rank-like
axioms, rather than a linear representation.
The framework specialises to ordinary matroids when $R$ is a field,
to arithmetic matroids for $R = \mathbb{Z}$, and to valuated/tropical
data for discrete valuation rings (DVRs) via module length/valuation.
Their work suggests some potential applications to linear codes
over a ring, though this connection is not developed in their paper.

Motivated by codes over rings, several matroid-like formalisms
capture dependence, support, and weight for module-linear codes.
Vertigan \cite{Vertigan04} introduced latroids, which associate
to a (module-)linear code a combinatorial object whose
Tutte polynomial determines the code's weight enumerator,
extending the matroidal viewpoint to modules (see also \cite{GorlaSalizzoni25}).
Demi-matroids introduced by Britz, Johnsen, Mayhew, and Shiromoto
\cite{WeiTypeDuality} relax the rank axioms and furnish the right setting
for generalised Hamming weights and Wei-type duality,
thereby accommodating codes beyond fields (see also \cite{WeiTypeDuality, HarmonicTutte2}).

Our aim is to extend the matrix construction of vector matroids
from fields to rings via modular independence;
we do not pursue axiomatic generalisations in this paper.
At the level of general local rings, modular independence
always determines an independence system,
whereas the stronger matroidal properties studied in this paper
emerge most naturally over chain rings.
Accordingly, Section~\ref{sect:matroid_repr_by_modular_indep}
works over local rings, while Sections~\ref{sect:matroid_repr_codes} and
\ref{sect:application} specialise to finite commutative chain rings.

The organisation of the paper is as follows.
In Section~\ref{sect:preliminaries}, we review the necessary background
on matroids, independence systems, clutters, local and chain rings,
linear codes over rings, and modular independence.
In Section~\ref{sect:matroid_repr_by_modular_indep}, we investigate
when modular independence on subsets of modules gives rise to matroids.
In particular, we characterise chain rings by the property that
the minimal number of generators of a finitely generated submodule
is monotone under inclusion, and over chain rings we obtain
a criterion for the submodularity of the resulting rank function.
In Section~\ref{sect:matroid_repr_codes}, we study the independence systems
arising from codes over finite chain rings.
We identify puncturing with deletion, prove that shortening
agrees with contraction under a contractibility hypothesis,
and establish duality for free codes.
In Section~\ref{sect:application}, we study representations
over finite chain rings more concretely.
We derive size bounds for simple matroids and uniform matroids,
compare representability over $\mathbb{Z}/4\mathbb{Z}$ and
$\mathbb{Z}/8\mathbb{Z}$ with representability over fields
of the same cardinality, and show that all excluded minors for
$\mathbb{F}_{4}$-representability admit representations over
$\mathbb{Z}/{4}\mathbb{Z}$.
Appendix~\ref{sect:appendix} collects explicit matrices
for the examples used in Section~\ref{sect:application}. 
\section{Preliminaries} \label{sect:preliminaries}

\subsection{Matroids, Independence Systems, and Clutters} \label{subsect:matroid_indep_sys_clutter}

A \emph{matroid} $M$ is defined as an ordered pair $(E, \mathcal{I})$
consisting of a finite set $E$ and a collection $\mathcal{I}$ of
subsets of $E$ satisfying the following three properties:
\begin{enumerate}[label=\textup{(I\arabic{*})}, itemsep=0ex]
  \item $\emptyset \in \mathcal{I}$;
  \item if $I \in \mathcal{I}$ and $I^{\prime} \subseteq I$, then $I^{\prime} \in \mathcal{I}$;
  \item for all $I_{1}, I_{2} \in \mathcal{I}$ with $\abs{I_{1}} < \abs{I_{2}}$,
          there is some element $e \in I_{2} \setminus I_{1}$ such that
          $I_{1} \cup \{ e \} \in \mathcal{I}$.
\end{enumerate}

The pair $M = (E, \mathcal{I})$ satisfying the first two properties
(I1) and (I2) is called an \emph{independence system}.
Even if $M$ is an independence system but not a matroid,
we call the set $E$ the \emph{ground set}, and each member of $\mathcal{I}$
an \emph{independent set} of $M$.
A subset of $E$ which is not in $\mathcal{I}$ is called
a \emph{dependent set} of $M$. An independence system is
primarily a term in combinatorial optimisation;
in the context of combinatorial topology,
$M$ is also called an \emph{abstract simplicial complex} with \emph{vertex set} $E$.
(Strictly speaking, to call $E$ the vertex set, it is necessary
that $\{ e \} \in \mathcal{I}$ for every $e \in E$.)
In the context, each member of $\mathcal{I}$ is called a \emph{simplex}.
In this paper, we adopt the term ``independence system''
in order to ensure consistency of terminology and notation.

One of the most attractive features of matroids is that
there are many equivalent axiom systems for matroids,
which is known as \emph{cryptomorphisms} (see, for example \cite[Chapter 2]{GordonMcNulty12}).
Actually, if $(E, \mathcal{I})$ is a matroid, the integer-valued function
$r \colon 2^{E} \to \mathbb{Z}_{\geq 0}$ defined as
\begin{equation*}
  r(X) \coloneqq \max \{ \abs{I} : I \subseteq X, \, I \in \mathcal{I} \} \quad \text{for all } X \subseteq E
\end{equation*}
satisfies the following three properties:
\begin{enumerate}[label=\textup{(R\arabic{*})},itemsep=0ex]
  \item For all $X \subseteq E$, $0 \leq r(X) \leq \abs{X}$;
  \item For all $X \subseteq Y \subseteq E$, $r(X) \leq r(Y)$;
  \item For all $X, Y \subseteq E$, $r(X \cup Y) + r(X \cap Y) \leq r(X) + r(Y)$.
\end{enumerate}
The last condition (R3) is called \emph{submodularity}.
The function $r$ is called the rank function of the matroid.
Conversely, given a function $r \colon 2^{E} \to \mathbb{Z}_{\geq 0}$
satisfying (R1)--(R3),
\begin{equation*}
  \mathcal{I} \coloneqq \{ I \subseteq E \mid r(I) = \abs{I} \}
\end{equation*}
satisfies (I1)--(I3), and thus $(E, \mathcal{I})$ becomes a matroid.
Furthermore, we can similarly characterise the independence system
as the function $r \colon 2^{E} \to \mathbb{Z}_{\geq 0}$
with the condition (R1), (R2), and
\begin{enumerate}[itemsep=0ex]
  \item[] (R2+) For all $X \subseteq E$ and $e \in E$, $r(X \cup \{ e \}) \leq r(X) + 1$.
  \item[] (H) For all $I \subseteq E$ with $r(I) = \abs{I}$ and for all $e \in I$, $r(I \setminus \{ e \}) = \abs{I} - 1$.
\end{enumerate}
Note that if (R2+) holds, we may replace (R1) with (R1')
$r(\emptyset) = 0$. In fact, if (R1') and (R2+) hold,
$r$ satisfies (R1) because $0 = r(\emptyset) \leq r(\emptyset \cup X) = r(X) \leq r(\emptyset) + \abs{X} = \abs{X}$.

If $(E, \mathcal{I})$ is a matroid, the collection of minimal dependent sets
(under the inclusion relation),
\begin{equation} \label{eq:cryptomorphism_I_to_C}
   \mathcal{C} = \{ D \subseteq E \mid D \notin \mathcal{I} \text{ and  } I \in \mathcal{I} \text{ for all } I \subsetneq D \}
\end{equation}
satisfies the following three properties:
\begin{enumerate}[label=\textup{(C\arabic{*})}, itemsep=0ex]
  \item $\emptyset \notin \mathcal{C}$;
  \item if $D_{1}, D_{2} \in \mathcal{C}$ and $D_{1} \subseteq D_{2}$, then $D_{1} = D_{2}$;
  \item if $D_{1}, D_{2} \in \mathcal{C}$ with $D_{1} \neq D_{2}$,
        and $x \in D_{1} \cap D_{2}$, then $D_{3} \subseteq (D_{1} \cup D_{2}) \setminus \{ x \}$
        for some $D_{3} \in \mathcal{C}$.
\end{enumerate}
Conversely, if $\mathcal{C} \subseteq 2^{E}$ satisfies (C1)--(C3), the collection
\begin{equation*}
  \mathcal{I}^{\prime} = \{ I \subseteq E \mid D \nsubseteq I, \text{for all } D \in \mathcal{C} \}
\end{equation*}
satisfies (I1)--(I3), and then $\mathcal{I}^{\prime} = \mathcal{I}$.
Thus, we may define a matroid as the pair $(E, \mathcal{C})$,
where $\mathcal{C}$ is a collection of subsets of $E$ satisfying
(C1), (C2), and (C3). Then each member of $\mathcal{C}$ is called
a \emph{circuit} of the matroid. Furthermore, it is straightforward that
the independence system is similarly characterised as
the pair $(E, \mathcal{C})$ with the condition (C1) and (C2).
Such $\mathcal{C}$ forms an \emph{antichain} in the Boolean lattice,
and it is also called a \emph{clutter} or \emph{Sperner family}.

In \cite{CluttersMatroids}, the following two operations called
\emph{deletion} and \emph{contraction} are defined on
the collection of the circuits of an independence system.
We note that they introduced two different types of
deletion and contraction for clutters.
One respects the collection of bases of matroids \cite[Definition 2.1]{CluttersMatroids},
and the other is consistent with the collection of circuits of matroids \cite[Definition 4.1]{CluttersMatroids}.
In terms of abstract simplicial complex, they correspond to
deletion and link, respectively.
For a collection $\mathcal{A}$ of subsets of $E$,
we denote the set of minimal members in $\mathcal{A}$
under the inclusion relation by $\Min(\mathcal{A})$.

\begin{dfn}[{\cite[Definition~4.1]{CluttersMatroids}}] \label{dfn:deletion_and_contraction}
  Let $(E, \mathcal{C})$ be an independence system, namely,
  $\mathcal{C}$ satisfies (C1) and (C2).
  The element $e \in E$ is a \emph{loop} (resp. \emph{isthmus} or a \emph{coloop})
  if $\{ e \} \in \mathcal{C}$ (resp. $e \notin D$ for all $D \in \mathcal{C}$).
  The \emph{deletion} of $T \subseteq E$ from $\mathcal{C}$ is
  the collection of subsets of $E \setminus T$ defined by
  \begin{equation*}
    \mathcal{C} \backslash T \coloneqq \{ D \subseteq E \setminus T \mid D \in \mathcal{C} \}.
  \end{equation*}
  The \emph{contraction} of $T \subseteq E$ from $\mathcal{C}$ is the collection of subsets of $E \setminus T$ defined by
  \begin{equation*}
    \mathcal{C}/T \coloneqq \Min(\{ D \setminus T \mid D \in \mathcal{C} \}).
  \end{equation*}
  In the clutter-theoretic definition above,
  contracting a loop would produce the empty set
  and hence violate \textnormal{(C1)}.
  Accordingly, whenever $\mathcal{C}/T$ is used below,
  we implicitly assume that $T$ is independent
  in the associated independence system;
  equivalently, $T$ contains no member of $\mathcal C$.
  A collection of circuits which is obtained by a sequence of deletions
  and contractions from $\mathcal{C}$ is called a \emph{minor} of $\mathcal{C}$.
\end{dfn}

It is clear that $\mathcal{C}/T$ and $\mathcal{C} \backslash T$
satisfy (C1) and (C2). Thus, we obtain the new independence systems
$M \backslash T \coloneqq (E \setminus T, \mathcal{C} \backslash T)$ and
$M/T \coloneqq (E \setminus T, \mathcal{C}/T)$ from an independence system $M$.
If $M$ is a matroid, $M \backslash T$ and $M/T$ evidently
correspond to the deletion and contraction of matroids, respectively.
As we make these operations agree with the corresponding operations for matroids,
the following is immediate (see, for example, \cite[Proposition~3.9]{GordonMcNulty12}):

\begin{prop}[{\cite[Proposition~3.9]{GordonMcNulty12}}] \label{prop:single_element_del_contr}
  Let $(E, \mathcal{C})$ be an independence system and
  $e$ an element that is neither an isthmus nor a loop. Then
  \begin{enumerate}[label=\textup{(\arabic*)}, itemsep=0ex]
    \item $D$ is a member of $\mathcal{C} \backslash \{ e \}$
      if and only if $e \notin D$ and $D$ is a member of $\mathcal{C}$.
    \item $D$ is a member of $\mathcal{C}/\{ e \}$ if and only if
      \begin{enumerate}[label=\textup{(\roman*)}, itemsep=0ex]
        \item $D \cup \{ e \}$ is a member of $\mathcal{C}$, or
        \item $D$ is a member of $\mathcal{C}$ and
          $D \cup \{ e \}$ contains no circuits except $D$.
      \end{enumerate}
  \end{enumerate}
\end{prop}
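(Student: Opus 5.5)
Both parts follow by unwinding Definition~\ref{dfn:deletion_and_contraction} with $T=\{e\}$. For (1), $\mathcal{C}\backslash\{e\}$ consists of the members $D\in\mathcal{C}$ with $D\subseteq E\setminus\{e\}$, that is, with $e\notin D$. So (1) is literally the definition, and the hypothesis on $e$ is not needed.

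For (2), $e$ is not a loop, so $\{e\}$ is independent and $\mathcal{C}/\{e\}=\Min(\{D'\setminus\{e\} : D'\in\mathcal{C}\})$ is defined. Every set $D'\setminus\{e\}$ is nonempty, since $D'=\{e\}$ would make $e$ a loop and $D'=\emptyset$ is excluded by (C1). The candidates therefore split into two types: sets $D'\setminus\{e\}$ with $e\in D'$, and sets $D'$ with $e\notin D'$. For each candidate I determine when it is minimal in the family.

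First let $D=D'\setminus\{e\}$ with $e\in D'\in\mathcal{C}$, so $D\cup\{e\}\in\mathcal{C}$. Suppose some candidate $D''\setminus\{e\}$ is properly contained in $D$. If $e\in D''$, then $D''\subsetneq D'$, which contradicts (C2). If $e\notin D''$, then $D''\subsetneq D\subseteq D'$, again contradicting (C2). Hence $D$ is minimal, and condition (i) implies membership.

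Next let $D\in\mathcal{C}$ with $e\notin D$. Then $D$ is non-minimal exactly when some candidate is a proper subset of $D$. A candidate of the form $D''\in\mathcal{C}$ with $D''\subsetneq D$ is ruled out by (C2). So the only possibility is $D''\setminus\{e\}\subsetneq D$ with $e\in D''$, which means $D''\subseteq D\cup\{e\}$ and $D''\neq D$. Thus $D$ is minimal if and only if $D\cup\{e\}$ contains no member of $\mathcal{C}$ other than $D$, which is condition (ii).

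Conversely, every member of $\mathcal{C}/\{e\}$ is a candidate of one of these two types and is minimal, so it satisfies (i) or (ii). The statement says ``circuits'', meaning members of $\mathcal{C}$. The isthmus hypothesis is not actually needed; it only ensures that case (i) occurs. The one point requiring care is the second case, where a proper subset $D''\setminus\{e\}\subsetneq D$ must be translated into $D''\subseteq D\cup\{e\}$ with $D''\ne D$; this is a direct set computation using $e\notin D$.
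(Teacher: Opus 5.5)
Your proof is correct and matches the paper's approach. The paper gives no separate proof; it calls the statement immediate from Definition~\ref{dfn:deletion_and_contraction} and cites Gordon--McNulty, and your split of the candidates $D'\setminus\{e\}$ according to whether $e\in D'$ is that unwinding made explicit. One small precision: the reverse translation from ``$D''\subseteq D\cup\{e\}$, $D''\neq D$'' back to ``$e\in D''$ and $D''\setminus\{e\}\subsetneq D$'' is not purely a set computation, because it uses (C2) to exclude both $D''\subsetneq D$ and $D''=D\cup\{e\}$. Also, here and throughout, $D$ is tacitly assumed to be a subset of $E\setminus\{e\}$.
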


For a matroid $M$, two distinct elements $e, f\in E$ are called
\emph{parallel} if $\{ e, f \}$ is a circuit.
A matroid is called \emph{simple} if it has neither loops nor
parallel elements.
We write $U_{k,n}$ for the \emph{uniform matroid} of rank $k$
on an $n$-element ground set, that is,
the matroid whose independent sets are precisely
the subsets of cardinality at most $k$.

\subsection{Codes over Rings and Independence} \label{subsect:codes_over_rings_and_independence}

Throughout the paper, we shall assume that all rings are commutative
and have the multiplicative identity.
A ring is called \emph{local} if it has a unique maximal ideal.
Unless otherwise noted, we assume that $R$ is a local ring
with maximal ideal $\mathfrak{m}$;
then set $R^{\times} \coloneqq R \setminus \mathfrak{m}$,
the group of units of $R$.
For a subset $S \subseteq V$, we denote $\langle S \rangle_{R}$ or simply $\langle S \rangle$
the $R$-submodule generated by the elements in $S$.
If $S = \{ v_{1}, \dots, v_{\ell} \}$, we also denote $\langle v_{1}, \dots, v_{\ell} \rangle \coloneqq \langle S \rangle$.
For an $R$-module $V$ and an ideal $\mathfrak{a}$ of $R$,
we write $\mathfrak{a}V \coloneqq \{ av \mid a \in \mathfrak{a}, \, v \in V \}$.
By the fundamental fact in algebra, $\mathbb{F} = R/\mathfrak{m}$ is a field.
If $V$ is an $R$-module, then we regard $V/\mathfrak{m}V$ as an $R/\mathfrak{m}$-module,
that is, an $\mathbb{F}$-vector space in the canonical way.
Especially, $V/\mathfrak{m}V$ is finite dimensional
as an $\mathbb{F}$-vector space if $V$ is finitely generated.
For all positive integers $k$, let $R^{k}$ denote the $R$-module of
all ordered $k$-tuples of elements over $R$,
equipped with component-wise addition and scalar multiplication.
Recall that an $R$-module $V$ is \emph{free} if $V \cong R^{k}$ for some integer $k \geq 0$.
For any $R$-module $V$ and nonempty finite set $J$,
we also denote by $V^{J}$ the $R$-module of the functions
from $J$ to $V$. There is a canonical isomorphism
\begin{equation*}
  \begin{array}{ccc}
    V^{J} & \xrightarrow{\sim} & \prod_{j \in J} V \\
    (v \colon J \ni j \mapsto v(j) \in V) &\mapsto& (v(j))_{j \in J}.
  \end{array}
\end{equation*}
Especially, if $V = R$, then under this isomorphism,
we write members of $R^{J}$ in boldface (say, $\bm{v}$);
for $j \in J$ we write $v_{j} \coloneqq \bm{v}(j)$
for the component at $j$, so $\bm{v} = (v_{j})_{j \in J}$.
Similarly, for any two finite sets $J$ and $E$,
we identify the set $R^{J \times E}$ of the mappings from $J \times E$
to $R$, after choosing an ordering of $J$ and $E$, with the set
$R^{\abs{J} \times \abs{E}}$ of $\abs{J} \times \abs{E}$ matrices over $R$.
Under this identification, we write a matrix in $R^{J \times E}$
as, say, $A = (a_{je})_{j \in J, e \in E}$.
Also, for $J^{\prime} \subseteq J$ and $E^{\prime} \subseteq E$,
let $A_{J^{\prime}, E^{\prime}} = (a_{je})_{j \in J^{\prime}, e \in E^{\prime}}$
be the induced submatrix of $A$ by $J^{\prime}$ and $E^{\prime}$.
If $U$ is an $R$-submodule of an $R$-module $V$,
we write $U \leq V$.

For a finitely generated $R$-module $V$, we define
\begin{equation} \label{eq:definition_of_muR}
  \mu_{R}(V) \coloneqq \dim_{R/\mathfrak{m}}(\overline{V}),
\end{equation}
where $\overline{V} \coloneqq V/\mathfrak{m}V$.
The following theorem is a fundamental result in ring theory
derived by Nakayama's lemma (or a theorem of Krull and Azumaya).

\begin{thm}[{\cite[Theorem~2.3]{Matsumura_1987}}] \label{thm:number_of_minimal_generators}
  Let $R$ be a local commutative ring with maximal ideal $\mathfrak{m}$.
  Let $V$ be a finitely generated $R$-module.
  Then, setting $\mathbb{F} \coloneqq R/\mathfrak{m}$ and $\ell \coloneqq \mu_{R}(V)$,
  it follows that
  \begin{enumerate}[label=\textup{(\roman{*})}, itemsep=0ex]
    \item If we take a basis $\{ \overline{\bm{v}}_{1}, \dots, \overline{\bm{v}}_{\ell} \}$
      for $\overline{V}$ over $\mathbb{F}$, and
      choose an inverse image $\bm{v}_{i} \in V$ of
      each $\overline{\bm{v}}_{i}$,
      then $\{ \bm{v}_{1}, \dots, \bm{v}_{\ell} \}$ is a minimal set of
      generators of $V$;
    \item conversely every minimal set of generators of
      $V$ is obtained in this way, and so has $\ell$ elements.
    \item If $\{ \bm{v}_{1}, \dots, \bm{v}_{\ell} \}$
      and $\{ \bm{w}_{1}, \dots, \bm{w}_{\ell} \}$ are
      both minimal sets of generators of $V$,
      and $\bm{w}_{i} = \sum a_{ij} \bm{v}_{j}$ with
      $a_{ij} \in R$ then $\det (a_{ij})$ is a unit of $R$,
      so that $(a_{ij})$ is an invertible matrix.
  \end{enumerate}
\end{thm}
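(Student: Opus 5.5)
The plan is to prove the three parts in order from Nakayama's lemma: if $N$ is a finitely generated $R$-module with $N = \mathfrak{m}N$, then $N = 0$. The working form is the consequence that a submodule $W \leq V$ with $W + \mathfrak{m}V = V$ must equal $V$. To see this, apply Nakayama to the finitely generated module $N = V/W$. The hypothesis gives $\mathfrak{m}N = (\mathfrak{m}V + W)/W = N$, so $N = 0$. Nakayama's lemma itself I will prove by the usual determinant (Cayley--Hamilton) trick. Write $n_i = \sum_j a_{ij} n_j$ with $a_{ij} \in \mathfrak{m}$. Then $\det(I - (a_{ij}))$ kills $N$, and this determinant is congruent to $1 \bmod \mathfrak{m}$, hence a unit because $R$ is local.

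For (i), let $W = \langle \bm{v}_1, \dots, \bm{v}_\ell \rangle$. The images of the $\bm{v}_i$ span $\overline{V}$, so $W + \mathfrak{m}V = V$, and therefore $W = V$. Minimality is a counting argument. If some proper subset of the $\bm{v}_i$ generated $V$, its images would span $\overline{V}$ with fewer than $\ell = \dim_{\mathbb{F}} \overline{V}$ vectors, which is a contradiction.

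For (ii), take any minimal generating set $\{ \bm{w}_1, \dots, \bm{w}_m \}$. Its images span $\overline{V}$. Suppose they were linearly dependent over $\mathbb{F}$. Then some $\overline{\bm{w}}_k$ lies in the span of the others. By the argument of (i), the remaining $\bm{w}_i$ then generate a submodule $W'$ with $W' + \mathfrak{m}V = V$, hence $W' = V$, contradicting minimality. So the images of the $\bm{w}_i$ form a basis, $m = \ell$, and the set arises as in (i).

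For (iii), I will also express the $\bm{v}_j$ in terms of the $\bm{w}_i$, writing $\bm{v}_j = \sum_k b_{jk} \bm{w}_k$. Reducing both relations mod $\mathfrak{m}$ gives $\overline{A}\,\overline{B} = I$ as change-of-basis matrices between the two bases of $\overline{V}$ obtained in (ii). Hence $\det \overline{A} \neq 0$ in $\mathbb{F}$, that is, $\det A \notin \mathfrak{m}$, so $\det A$ is a unit. The adjugate formula then shows that $A$ is invertible over $R$.

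The only real obstacle is Nakayama's lemma itself, specifically the determinant trick. The rest is linear algebra over $\mathbb{F}$ transported along $V \to \overline{V}$. Since the statement is cited from Matsumura, the paper would presumably simply cite it.
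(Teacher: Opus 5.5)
Your proof is correct and follows the standard argument in Matsumura: the determinant-trick proof of Nakayama's lemma, then $W+\mathfrak{m}V=V\Rightarrow W=V$, then reduction modulo $\mathfrak{m}$. The paper does not reprove this result; it simply cites Matsumura. The only point you leave implicit is that a minimal generating set is automatically finite, which holds because any generating set of a finitely generated module already contains a finite generating subset.
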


Now we collect the basic ring-theoretic facts needed for the arguments in
Section~\ref{sect:matroid_repr_by_modular_indep}.
In that section we impose no finiteness hypothesis on the ring $R$,
in keeping with the matroid-theoretic practice of considering representations over
both finite and infinite fields (e.g., $\mathbb{R}$ and $\mathbb{C}$).

\begin{dfn}[{\cite{Jilyana99}}] \label{dfn:chain_ring}
  A (commutative) ring $R$ is a \emph{chain ring} if its ideals are
  linearly ordered by inclusion.
\end{dfn}

\begin{dfn}[{\cite{Jilyana99}}] \label{dfn:PIR}
  A ring $R$ is called a \emph{principal ideal ring} (abbreviated PIR)
  if, for any ideal $\mathfrak{a}$ of $R$, there exists $x \in \mathfrak{a}$
  such that $\mathfrak{a} = Rx = xR$. 
\end{dfn}

\begin{lem}[{\cite[Lemma~1.15]{Jilyana99}}] \label{lem:noetherian_chain_iff_local_PIR}
  If $R$ is a local ring with maximal ideal $\mathfrak{m}$,
  which is not necessarily Noetherian but satisfies $\bigcap_{\ell \geq 1} \mathfrak{m}^{\ell} = \{ 0 \}$,
  then the following conditions on $R$ are equivalent:
  \begin{enumerate}[label=\textup{(\arabic*)}, itemsep=0ex]
    \item $\mathfrak{m}$ is principal;
    \item $R$ is a \textup{PIR};
    \item $R$ is a chain ring, hence $R$ is Noetherian.
  \end{enumerate}
\end{lem}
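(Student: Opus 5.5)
The plan is to prove the cycle (3) $\Rightarrow$ (2) $\Rightarrow$ (1) $\Rightarrow$ (3). The hypothesis $\bigcap_{\ell} \mathfrak{m}^{\ell} = \{0\}$ is needed only in the last implication, which also yields the Noetherian claim.

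\emph{(3) $\Rightarrow$ (2).} Let $\mathfrak{a}$ be an ideal of the chain ring $R$, and suppose first that $\mathfrak{a}$ is finitely generated, say by $x_{1}, \dots, x_{n}$. The principal ideals $Rx_{i}$ are totally ordered, so one of them contains all the others and equals $\mathfrak{a}$. For an arbitrary $\mathfrak{a}$ this argument does not apply directly. I would therefore postpone the general case: first prove that $R$ is Noetherian from (3) together with the intersection hypothesis, and then the finitely generated case suffices. To get Noetherianity from (3), I would show that $\mathfrak{m}$ is principal and then proceed as in (1) $\Rightarrow$ (3).

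To see that $\mathfrak{m}$ is principal, note that $\mathfrak{m} \neq \mathfrak{m}^{2}$ unless $\mathfrak{m}=0$, since otherwise $\mathfrak{m} = \bigcap_{\ell} \mathfrak{m}^{\ell} = 0$. Pick $t \in \mathfrak{m} \setminus \mathfrak{m}^{2}$. By the chain property, $Rt$ and $\mathfrak{m}^{2}$ are comparable, and since $t \notin \mathfrak{m}^{2}$ we get $\mathfrak{m}^{2} \subsetneq Rt$. Now take any $y \in \mathfrak{m}$. Again $Ry$ and $Rt$ are comparable. If $t = ry$ with $r \in \mathfrak{m}$, then $t \in \mathfrak{m}^{2}$, a contradiction. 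Hence either $y \in Rt$, or $t = uy$ with $u$ a unit, in which case $y \in Rt$ as well. Thus $\mathfrak{m} = Rt$. So (3) implies (1), and the real content is (1) $\Rightarrow$ (2), (3) together with Noetherianity. I will reorganise the proof as (3) $\Rightarrow$ (1), (1) $\Rightarrow$ everything, and (2) $\Rightarrow$ (1), the last being trivial.

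\emph{(1) $\Rightarrow$ (2), (3).} Let $\mathfrak{m} = Rt$, so $\mathfrak{m}^{\ell} = Rt^{\ell}$. Take $0 \neq x \in R$. By the intersection hypothesis there is a largest $\ell \geq 0$ with $x \in \mathfrak{m}^{\ell}$. Write $x = ut^{\ell}$. Then $u \notin \mathfrak{m}$, since otherwise $x \in \mathfrak{m}^{\ell+1}$, so $u$ is a unit. Hence every nonzero element is a unit multiple of a power of $t$. Now let $\mathfrak{a} \neq 0$ be an ideal, and let $\ell$ be the minimal exponent of the powers of $t$ appearing among its nonzero elements. Then $\mathfrak{a} = Rt^{\ell}$. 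Consequently every ideal lies in the list $R \supseteq Rt \supseteq Rt^{2} \supseteq \cdots \supseteq 0$. This list is totally ordered, which gives (3), and every ideal is principal, which gives (2) and Noetherianity.

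The main obstacle is the non-Noetherian setting, where there is no a priori control of ideals. I handle it with the valuation argument $x = ut^{\ell}$, which is exactly where $\bigcap_{\ell} \mathfrak{m}^{\ell} = 0$ is used. The step (3) $\Rightarrow$ (1) also needs care, namely the choice of $t \notin \mathfrak{m}^{2}$ and the comparison argument above.
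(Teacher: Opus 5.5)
Your proof is correct. The cycle (3)$\Rightarrow$(1)$\Rightarrow$(2),(3), together with the trivial (2)$\Rightarrow$(1), is complete, and it uses the hypothesis $\bigcap_{\ell}\mathfrak{m}^{\ell}=\{0\}$ exactly where it is needed: to get $\mathfrak{m}\neq\mathfrak{m}^{2}$, and to define the largest $\ell$ with $x\in\mathfrak{m}^{\ell}$.

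The paper does not prove this lemma itself; it quotes it from the literature. Your two key steps coincide with arguments the paper uses elsewhere:
\begin{itemize}
\item choosing $x\in\mathfrak{m}\setminus\mathfrak{m}^{2}$ and showing $\mathfrak{m}=\langle x\rangle$, as in the proof of Theorem~\ref{thm:monotonic_iff_chain_ring}, (3)$\Rightarrow$(1), where the paper uses principality of $\langle x,y\rangle$ in place of your comparability of $Ry$ and $Rt$;
\item the description $a=u\theta^{t}$ of nonzero elements, given in the preliminaries on chain rings.
\end{itemize}
You can simply drop your aborted first attempt at (3)$\Rightarrow$(2).
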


\begin{lem}[{\cite[Lemma~1(ii)]{Guyot17}}] \label{lem:decomposition_PIR}
  Let $R$ be a \textup{PIR}. 
  Then every finitely generated $R$-module $V$ has a unique invariant
  factor decomposition, i.e., a decomposition of the form
  \begin{equation*}
    V \cong R/\mathfrak{a}_{1} \times R/\mathfrak{a}_{2} \times \dots \times R/\mathfrak{a}_{k}
    \quad \text{with} \quad
    R \neq \mathfrak{a}_{1} \supseteq \mathfrak{a}_{2} \supseteq \dots \supseteq \mathfrak{a}_{k}
  \end{equation*}
  where the factors $\mathfrak{a}_{i}$ are uniquely determined by the latter condition.
  For such decomposition, $k$ is the minimal number of generators of $V$, that is $k = \mu_{R}(V)$.
\end{lem}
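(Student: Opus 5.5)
We prove Lemma~\ref{lem:decomposition_PIR} in two parts: existence of an invariant factor decomposition, then uniqueness of the ideals $\mathfrak{a}_{i}$ together with the identification $k=\mu_{R}(V)$.

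\textbf{Reduction to the local case.} A principal ideal ring is Noetherian, so it has finitely many maximal ideals. The Chinese remainder theorem then gives a splitting $R \cong R_{1} \times \dots \times R_{s}$, where each $R_{i}$ is either a principal ideal domain or a local principal ideal ring whose maximal ideal is nilpotent (the classical Zariski--Samuel structure theorem). Every module splits compatibly, $V \cong \prod_{i} V_{i}$. An ideal chain of $R$ is assembled factorwise from chains of the $R_{i}$, padding with the ideals $R_{i}$ themselves, so it suffices to treat a PID and a local PIR separately.

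\textbf{Existence.} The PID case is the classical Smith normal form. In the local case, write $\mathfrak{m}=Rt$. Every element has the form $ut^{j}$ with $u$ a unit, because $\mathfrak{m}$ is nilpotent. Hence the ideals form the chain $R \supsetneq Rt \supsetneq \dots \supsetneq Rt^{n}=0$, and $R$ is a chain ring, consistent with Lemma~\ref{lem:noetherian_chain_iff_local_PIR}. Choose generators realising $\ell=\mu_{R}(V)$, as in Theorem~\ref{thm:number_of_minimal_generators}, and a presentation $R^{m} \xrightarrow{A} R^{\ell} \to V \to 0$. Any entry of $A$ of minimal valuation divides every other entry, so it can be moved to position $(1,1)$ and used to clear its row and column. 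Inducting diagonalises $A$ as $\diag(t^{e_{1}},\dots)$ with $e_{1}\le e_{2}\le\cdots$, which gives $V \cong \prod_{i} R/Rt^{e_{i}}$. Because the generating set is minimal, no column of $A$ contains a unit entry, so every $e_{i}\ge 1$. Therefore no factor is $R/R=0$, and there are exactly $\ell$ factors.

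\textbf{Uniqueness and $k=\mu_{R}(V)$.} Take any decomposition $V \cong \prod_{i=1}^{k} R/\mathfrak{a}_{i}$ with $\mathfrak{a}_{i}\ne R$. Reduce it locally at a maximal ideal $\mathfrak{m}$ that contains $\mathfrak{a}_{1}$, which exists because $\mathfrak{a}_{1}\neq R$. Since $\mathfrak{a}_{1}\supseteq\mathfrak{a}_{i}$, every $\mathfrak{a}_{i}\subseteq\mathfrak{m}$. It follows that $V/\mathfrak{m}V \cong (R/\mathfrak{m})^{k}$, which bounds the number of generators from below by $k$. The $k$ visible generators give the reverse bound, so $k$ is the minimal number of generators; in the local case this is exactly $\mu_{R}(V)$ as defined in \eqref{eq:definition_of_muR}.

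The ideals themselves are recovered from invariants of $V$ alone. They are the Fitting ideals $\mathrm{Fitt}_{j}(V)$, whose successive quotients determine the $\mathfrak{a}_{i}$. Alternatively, in the local case one counts $\dim_{\mathbb{F}}(t^{j}V/t^{j+1}V)$, which equals the number of $i$ with $e_{i}>j$; these counts determine the exponents $e_{i}$. Combining the components over the factors $R_{i}$ then gives uniqueness over $R$.

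\textbf{Main obstacle.} The delicate step is handling a non-local principal ideal ring, where the unified chain condition $\mathfrak{a}_{1}\supseteq\cdots$ has to be assembled from the local chains. I would settle this via Fitting ideals, whose invariance is standard, rather than through componentwise bookkeeping.
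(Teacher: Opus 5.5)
The paper does not prove this lemma; it cites it from \cite{Guyot17}, and later uses only existence and $k=\mu_{R}(V)$ over a local chain ring. Both of those parts of your proposal are sound. You use Smith normal form over the chain ring, and all entries of the relation matrix lie in $\mathfrak{m}$ because the generating set is minimal. Reducing modulo a maximal ideal $\mathfrak{m}\supseteq\mathfrak{a}_{1}\supseteq\mathfrak{a}_{i}$ then gives $V/\mathfrak{m}V\cong(R/\mathfrak{m})^{k}$. The genuine gap is the uniqueness step you say you would rely on. Since $\mathrm{Fitt}_{k-j}(V)=\mathfrak{a}_{1}\cdots\mathfrak{a}_{j}$, reading off $\mathfrak{a}_{j}$ requires cancelling $\mathfrak{a}_{1}\cdots\mathfrak{a}_{j-1}$, which is impossible once $R$ has non-zero nilpotents. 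For example, over $R=\mathbb{Z}_{4}$ consider $R/\langle 2\rangle\times R/\langle 2\rangle$ and $R/\langle 2\rangle\times R/\langle 0\rangle$. Both satisfy the chain condition, and both have $\mathrm{Fitt}_{0}=0$, $\mathrm{Fitt}_{1}=\langle 2\rangle$ and $\mathrm{Fitt}_{j}=R$ for $j\geq 2$. Yet they have $4$ and $8$ elements respectively. So Fitting ideals do not determine the invariant factors over a special principal ideal ring, and the route you commit to in your last paragraph fails on exactly the factors that matter in this paper.

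The fix uses ingredients you already have: argue componentwise, which you set aside. A decomposition of $V$ over $R$ induces one of each $V_{i}$ over $R_{i}$. Its chain consists of some leading copies of $R_{i}$, followed by proper ideals. The number of leading copies is $k$ minus the minimal number of generators of $V_{i}$, and both of these are invariants. On a special principal ideal ring factor, the proper part is pinned down by your count $\dim_{\mathbb{F}}(t^{j}V_{i}/t^{j+1}V_{i})=\abs{\{l : e_{l}>j\}}$. On a PID factor it is pinned down by classical uniqueness, where the Fitting argument is valid.

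Correspondingly, in the existence part the padding copies of $R_{i}$ must go at the \emph{front} of each component chain, with $k=\max_{i}k_{i}$; otherwise the combined chain is not decreasing.

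Finally, your justification of the splitting $R\cong R_{1}\times\dots\times R_{s}$ is wrong. A Noetherian ring can have infinitely many maximal ideals ($\mathbb{Z}$ does). Moreover, the Chinese remainder theorem applied to maximal ideals would only split $R$ modulo its Jacobson radical. The splitting is the content of the Zariski--Samuel theorem, obtained by applying the Chinese remainder theorem to pairwise comaximal primary components of $0$. You may cite that theorem, but not derive it the way you did.
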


For an $R$-module $V$ and $\bm{v} \in V$, the ideals
\begin{align*}
  \Ann_{R}(\bm{v}) &\coloneqq \{ r \in R \mid r\bm{v} = \bm{0} \}, \\
  \Ann_{R}(V) &\coloneqq \{ r \in R \mid rV = \{ \bm{0} \} \} = \bigcap_{\bm{v} \in V} \Ann_{R}(\bm{v})
\end{align*}
are called the \emph{annihilators} of $\bm{v}$ and $V$, respectively.
Then we define
\begin{align*}
  \psi(V) &\coloneqq \{ \bm{v} \in V \mid \Ann_{R}(\bm{v}) = \{ 0 \} \} \\
  &= \{ \bm{v} \in V \setminus \{ \bm{0} \} \mid \alpha \bm{v} \neq \bm{0} \, \text{ for all } \alpha \in \mathfrak{m} \setminus \{ 0 \} \}.
\end{align*}
(The exclusion of $\bm{0}$ on the second line is only to keep the
equivalence with the first line when $R$ is a field: in that case
$\mathfrak m = \{ 0 \}$ and $\mathfrak m\setminus\{0\}=\emptyset$, so the
universal condition is vacuous. Otherwise, $\bm{0}$ is excluded
automatically.)
It is easy to check that $\psi(V)$ is stable under multiplication
by units: that is, for every $u \in R^{\times}$ and $\bm{v} \in \psi(V)$,
we have $u\bm{v} \in \psi(V)$.
Note that it is not necessarily the case that
$\bm{v} + \bm{v}^{\prime} \in \psi(V)$ for $\bm{v}, \bm{v}^{\prime} \in \psi(V)$;
therefore $\psi(V)$ need not be an $R$-submodule of $V$.

\begin{lem} \label{lem:psi_and_Ann}
  For a commutative local ring $(R, \mathfrak{m})$,
  the following are equivalent:
  \begin{enumerate}[label=\textup{(\roman*)}, itemsep=0ex]
    \item For any finite set $E$ and every $R$-submodule $C \leq R^{E}$,
      \begin{equation*}
        \psi(C) = \{ (c_{i})_{i \in E} \in C \mid \exists i \in E, \, c_{i} \in R^{\times} \}.
      \end{equation*}
    \item For all finitely generated ideal $\mathfrak{a} \subseteq \mathfrak{m}$,
      \begin{equation*}
        \Ann_{R}(\mathfrak{a}) \neq \{ 0 \}.
      \end{equation*}
  \end{enumerate}
\end{lem}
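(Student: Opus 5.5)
We prove the two implications separately. Throughout, write $S(C) \coloneqq \{ (c_{i})_{i \in E} \in C \mid \exists i \in E, \, c_{i} \in R^{\times} \}$. One inclusion, $S(C) \subseteq \psi(C)$, holds unconditionally. If $c_{i} \in R^{\times}$ and $\alpha \bm{c} = \bm{0}$, then $\alpha c_{i} = 0$, and hence $\alpha = 0$. So $\Ann_{R}(\bm{c}) = \{0\}$. The content of (i) is therefore the reverse inclusion $\psi(C) \subseteq S(C)$. Since $\psi(C)$ is defined elementwise, $\psi(C) = \psi(R^{E}) \cap C$, and the same holds for $S$. Consequently (i) is equivalent to the statement that every $\bm{c} \in R^{E}$ all of whose coordinates lie in $\mathfrak{m}$ has a nonzero annihilator. (When $R$ is a field, such $\bm{c}$ is $\bm{0}$, whose annihilator is $R \neq \{0\}$, and (ii) is vacuous for $\mathfrak{a}=\{0\}$ since $\Ann(\{0\}) = R$; we handle this degenerate case separately.)

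For (ii)$\Rightarrow$(i), take $\bm{c} \in C$ with all $c_{i} \in \mathfrak{m}$. We must show $\bm{c} \notin \psi(C)$. Let $\mathfrak{a} \coloneqq \langle c_{i} : i \in E \rangle$. This is a finitely generated ideal contained in $\mathfrak{m}$, since $E$ is finite. By (ii) there is a nonzero $\alpha \in \Ann_{R}(\mathfrak{a})$. Then $\alpha c_{i} = 0$ for all $i$, so $\alpha \bm{c} = \bm{0}$ with $\alpha \neq 0$, and $\bm{c} \notin \psi(C)$. This also covers $\bm{c} = \bm{0}$, because $\Ann(\bm{0}) = R \neq \{0\}$.

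For (i)$\Rightarrow$(ii), let $\mathfrak{a} = \langle a_{1}, \dots, a_{n} \rangle \subseteq \mathfrak{m}$ with $n \geq 1$. If $\mathfrak{a} = \{0\}$, then $\Ann_{R}(\mathfrak{a}) = R \neq \{0\}$ and there is nothing to show. Otherwise, take $E = \{1, \dots, n\}$ and $C = R^{E}$, and set $\bm{a} = (a_{1}, \dots, a_{n})$. No coordinate of $\bm{a}$ is a unit, so $\bm{a} \notin S(C) = \psi(C)$ by (i). Hence $\Ann_{R}(\bm{a}) \neq \{0\}$. Since $\Ann_{R}(\bm{a}) = \bigcap_{i} \Ann_{R}(a_{i}) = \Ann_{R}(\mathfrak{a})$, this gives (ii).

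I expect no real obstacle. The only care needed is the bookkeeping around $\bm{0}$ and the field case, where the paper's convention excludes $\bm{0}$ from $\psi$. In that case the identity $\Ann_{R}(\bm{0}) = R \neq \{0\}$ makes both sides consistent. A smaller point is checking that the definitional identity $\Ann_{R}(\bm{a}) = \Ann_{R}(\mathfrak{a})$ follows from linearity.
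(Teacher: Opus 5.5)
Your proof is correct and follows the paper's argument essentially step for step. For (i)$\Rightarrow$(ii) you apply (i) to $C = R^{E}$ and the vector of generators of $\mathfrak{a}$, and for (ii)$\Rightarrow$(i) you use a nonzero element of $\Ann_{R}(\mathfrak{a})$, where $\mathfrak{a}$ is the ideal generated by the coordinates. The differences are only cosmetic: you use the equality $\Ann_{R}(\bm{a}) = \Ann_{R}(\mathfrak{a})$ where the paper needs only the inclusion, and you spell out the $\bm{0}$ and field-case bookkeeping.
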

\begin{proof}
  (i) $\Rightarrow$ (ii):
  Let $\mathfrak{a} = \langle a_{1}, \dots, a_{n} \rangle \subseteq \mathfrak{m}$
  be a finitely generated ideal of $R$.
  Set $E \coloneqq \{ 1, \dots, n \}$ and $\bm{c} \coloneqq (a_{1}, \dots, a_{n}) \in R^{E}$.
  For all $i \in E$, we have $a_{i} \in \mathfrak{m}$;
  thus applying (i) to the $R$-submodule $C = R^{E}$, we obtain $\bm{c} \notin \psi(R^{E})$.
  Moreover, by the definition of $\psi$, we get $\Ann_{R}(\bm{c}) \neq \{ 0 \}$.
  Since $\Ann_{R}(\bm{c}) \subseteq \Ann_{R}(\mathfrak{a})$, we have $\Ann_{R}(\mathfrak{a}) \neq \{ 0 \}$.

  (ii) $\Rightarrow$ (i):
  Set $S \coloneqq \{ (c_{i})_{i \in E} \in C \mid \exists i \in E, \, c_{i} \in R^{\times} \}$.
  Fix a vector $\bm{c} = (c_{i})_{i \in E} \in C$.
  If there exists $i \in E$ with $c_{i} \in R^{\times}$, then
  $\psi(C) \supseteq S$ since $\Ann_{R}(\bm{c}) = \{ 0 \}$.
  Assume that $c_{i} \in \mathfrak{m}$ for all $i \in E$, and
  set $\mathfrak{a} \coloneqq \langle \{ c_{i} \}_{i \in E} \rangle \subseteq \mathfrak{m}$.
  Since $\mathfrak{a}$ is finitely generated, the assumption (ii)
  guarantees the existence of a non-zero $\alpha \in \Ann_{R}(\mathfrak{a})$ such that $\alpha \bm{c} = \bm{0}$.
  Hence, $\bm{c} \notin \psi(C)$, so $\psi(C) \subseteq S$.
  Therefore, $\psi(C) = S$.
\end{proof}

\begin{rem} \label{rem:psi_and_Ann}
  If $R$ is finite (or Artinian in general), (ii) always holds, and so (i) does.
  Indeed, if $R$ is Artinian, the maximal ideal $\mathfrak{m}$ is nilpotent.
  Let $\nu$ be the nilpotency index of $\mathfrak{m}$ and
  $\mathfrak{a} \subseteq \mathfrak{m}$ be a finitely generated ideal.
  Then for any $x \in \mathfrak{m}^{\nu - 1}$ and any $a \in \mathfrak{a} \subseteq \mathfrak{m}$,
  we have
  \begin{equation*}
    xa \in \mathfrak{m}^{\nu - 1} \mathfrak{m} = \mathfrak{m}^{\nu} = 0.
  \end{equation*}
  Hence $\mathfrak{m}^{\nu - 1} \subseteq \Ann_{R}(\mathfrak{a})$.
  Since $\mathfrak{m}^{\nu - 1} \neq \{ 0 \}$ by the minimality of $\nu$,
  it follows that $\Ann_{R}(\mathfrak{a}) \neq 0$, and therefore (ii) holds.

  We next give a counterexample in the non-Artinian case.
  Let $\mathbb{F}$ be a field, and consider the formal power series ring
  $R = \mathbb{F} \llbracket t \rrbracket$ in one variable $t$.
  This is a commutative local ring with maximal ideal $\mathfrak{m} = \langle t \rangle$,
  but it does not satisfy (ii).
  Indeed, take the principal ideal $\mathfrak{a} \coloneqq \langle t \rangle = \mathfrak{m}$.
  Since $R$ is an integral domain (see \cite[Example 1 in Section 1]{Matsumura_1987}),
  we have
  \begin{equation*}
    \Ann_{R}(\mathfrak{a}) = \Ann_{R}(t) = \{ a \in R \mid at = 0 \} = 0.
  \end{equation*}
  Thus (ii) fails. In this situation, let $E = \{ 1 \}$ and $C = R \leq R^{E}$.
  Then $\bm{c} = (t) \in C$, and the computation above shows that
  $\bm{c} \in \psi(C)$. However, since $t$ is not a unit, (i) does not hold.
\end{rem}

\begin{dfn}\label{dfn:primitive_element}
  An element $\bm{v} \in V$ is called \emph{primitive}
  if $\bm{v} \notin \mathfrak{m}V$.
  Equivalently, its image in $V/\mathfrak{m}V$ is nonzero.
\end{dfn}

For two vectors $\bm{x} = (x_{i})_{i \in E}, \, \bm{y} = (y_{i})_{i \in E} \in R^{E}$ , define the \emph{inner product} as
\begin{equation*}
  \bm{x} \cdot \bm{y} = \sum_{i \in E} x_{i} y_{i}.
\end{equation*}

A \emph{code} $C$ of \emph{length} $n$ over $R$ is a nonempty subset of $R^{E}$,
where $n \coloneqq \abs{E}$. Each vector in $C$ is called a \emph{codeword},
and each element in $E$ is called a \emph{coordinate}.
The code $C$ is \emph{linear} (or an \emph{$R$-code}) if it is an $R$-submodule of $R^{E}$.
The \emph{dual code} $C^{\perp}$ of $C$ is defined as
\begin{equation*}
  C^{\perp} \coloneqq \{ \bm{x} \in R^{E} \mid \bm{x} \cdot \bm{c} = 0 \text{ for all } \bm{c} \in C \}.
\end{equation*}
Note that $C^{\perp}$ is also an $R$-submodule of $R^{E}$.
A matrix $G$ whose rows form a minimal generating set for $C$
is called a \emph{generator matrix} for $C$.
A generator matrix $H$ for $C^{\perp}$ is called a \emph{parity-check matrix} for $C$.

We record the following standard systematic form for free codes
over finite commutative local rings, since it will be used repeatedly
later; see, for example, \cite[Section~3.2]{SystematicForm}.
To describe the code itself, one uses generator matrices
up to elementary row operations and coordinate permutations.

\begin{prop}[{\cite{SystematicForm}}] \label{prop:systematic_form_for_free_codes}
  Let $R$ be a finite commutative local ring, and
  let $C \leq R^{E}$ be a free $R$-code of rank $k \coloneqq \mu_{R}(C)$.
  Then, after a permutation of the coordinates and elementary row operations,
  a generator matrix of $C$ can be written in the form
  \begin{equation*}
    G = \lbrack I_{k} \mid A \rbrack.
  \end{equation*}
  In this case, a parity-check matrix for $C$ is given by
  \begin{equation*}
    H = \lbrack -A^{\top} \mid I_{\abs{E}-k} \rbrack,
  \end{equation*}
  and in particular $C^{\perp}$ is free of rank $\abs{E}-k$.
  Consequently,
  \begin{equation*}
    C = \{ (\bm{u}^{\top}, \bm{u}^{\top}A) \mid \bm{u} \in R^{k} \}, \quad
    C^{\perp} = \{ (-\bm{v}^{\top}A^{\top}, \bm{v}^{\top}) \mid \bm{v} \in R^{\abs{E} - k} \}.
  \end{equation*}
\end{prop}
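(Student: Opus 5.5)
The plan is to start from a generator matrix $G$ of $C$ whose rows form a minimal generating set. By Theorem~\ref{thm:number_of_minimal_generators} this set has $k=\mu_{R}(C)$ elements. Since $C$ is free of rank $k$, these $k$ rows are in fact a basis of $C\cong R^{k}$, meaning they are $R$-linearly independent. Reducing modulo $\mathfrak{m}$, the images $\overline{G}$ span $\overline{C}$. Because $C$ is free, $\overline{C}\cong \mathbb{F}^{k}$ embeds into $\mathbb{F}^{E}$; this uses that $C\cap \mathfrak{m}R^{E}=\mathfrak{m}C$.

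To justify that equality, the first thing I would try is to show directly that some $k\times k$ minor of $G$ is a unit. Suppose instead that all $k\times k$ minors of $G$ lie in $\mathfrak{m}$. Then the rows of $\overline{G}$ are $\mathbb{F}$-dependent, so some $\mathbb{F}$-combination of them vanishes. Lifting this gives a primitive vector $\bm{u}\in R^{k}$ with $\bm{u}^{\top}G\in\mathfrak{m}R^{E}$. I would then use finiteness: $\mathfrak{m}$ is nilpotent, as in Remark~\ref{rem:psi_and_Ann}, and by Lemma~\ref{lem:psi_and_Ann} $\psi(C)$ consists exactly of the codewords having a unit coordinate. The vector $\bm{c}=\bm{u}^{\top}G$ has no unit coordinate, so some nonzero $\alpha$ satisfies $\alpha\bm{c}=\bm{0}$. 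Since the rows of $G$ are free, this forces $\alpha\bm{u}=\bm{0}$. But $\bm{u}$ has a unit entry, because it is primitive, so $\alpha=0$, which is a contradiction. Hence $\overline{G}$ has rank $k$ over $\mathbb{F}$, and some $k$ columns $K\subseteq E$ give an invertible $\overline{G}_{\cdot,K}$. Then $\det G_{\cdot,K}\notin\mathfrak{m}$, so it is a unit.

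Next I would permute coordinates so that $K$ comes first and left-multiply by $(G_{\cdot,K})^{-1}$, which is realised by elementary row operations over the local ring. This produces $G=[I_k\mid A]$. The description $C=\{(\bm{u}^{\top},\bm{u}^{\top}A)\}$ follows immediately, since the rows generate $C$.

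For the dual code, I would first check that $H=[-A^{\top}\mid I_{n-k}]$ satisfies $GH^{\top}=-A+A=0$, so the rows of $H$ lie in $C^{\perp}$. Conversely, take $\bm{x}=(\bm{y},\bm{z})\in C^{\perp}$. Pairing with the rows of $G$ gives $\bm{y}+A\bm{z}=\bm{0}$, so $\bm{y}=-A\bm{z}$ and $\bm{x}=\bm{z}^{\top}H$. The rows of $H$ are visibly $R$-independent thanks to the identity block, and reducing modulo $\mathfrak{m}$ shows they form a minimal generating set. Therefore $C^{\perp}$ is free of rank $n-k$ and $H$ is a parity-check matrix.

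I expect the main obstacle to be the first step: producing a unit $k\times k$ minor from freeness alone. This is exactly where finiteness, through Lemma~\ref{lem:psi_and_Ann} and the existence of nonzero annihilators, is needed.
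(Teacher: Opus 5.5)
Your proof is correct. The paper does not prove this proposition itself; it quotes it from \cite{SystematicForm} and uses it as a black box, so there is no internal proof to compare against.

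Your argument derives the statement from tools already in the paper:
\begin{itemize}
\item Freeness makes the $k$ rows of $G$ an $R$-basis of $C$.
\item Any vector of $\mathfrak{m}R^{E}$ has a nonzero annihilator (Lemma~\ref{lem:psi_and_Ann} with Remark~\ref{rem:psi_and_Ann}). This converts an $\mathbb{F}$-dependence among the rows of $\overline{G}$ into an $R$-dependence among the rows of $G$, which is impossible.
\item Hence $\overline{G}$ has rank $k$, so some $k\times k$ minor of $G$ is a unit, and the rest is routine.
\end{itemize}
Compared with the bare citation, this shows exactly where the hypothesis on $R$ enters. The key step really needs only $\Ann_{R}(\mathfrak{a})\neq 0$ for finitely generated $\mathfrak{a}\subseteq\mathfrak{m}$, not full finiteness. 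The example in Remark~\ref{rem:psi_and_Ann} shows that some such hypothesis is necessary: over $\mathbb{F}\llbracket t\rrbracket$, the code $tR\leq R^{1}$ is free of rank one but has no generator matrix of the form $[1]$.

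A few points to tighten:
\begin{itemize}
\item Justify that the $k$ rows are $R$-independent. The surjection $R^{k}\to C$ is a map between finite sets of the same cardinality $\abs{R}^{k}$, hence bijective. Without finiteness, use that a surjective endomorphism of a finitely generated module is injective.
\item Your first-paragraph appeal to $C\cap\mathfrak{m}R^{E}=\mathfrak{m}C$ is unnecessary. The minor argument gives $\operatorname{rank}\overline{G}=k$ directly.
\item The annihilating scalar can be written down explicitly as any nonzero element of $\mathfrak{m}^{\nu-1}$, where $\nu$ is the nilpotency index of $\mathfrak{m}$.
\item Replacing left multiplication by $(G_{\cdot,K})^{-1}$ with elementary row operations is Gaussian elimination over a local ring, since each column of an invertible matrix has a unit entry. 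Strictly, this is not even needed: any invertible left factor preserves the row span and the number of rows, so the result is still a generator matrix.
\end{itemize}
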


\begin{rem} \label{rem:counterexample_of_systematic_form_in_nonlocal_case}
  The systematic-form description above may fail if $R$ is not local.
  For example, consider $C = \langle (2, 3) \rangle \leq \mathbb{Z}_{6}^{2}$.
  Then the mapping $\mathbb{Z}_{6} \to \mathbb{Z}_{6}^{2}, \, r \mapsto r(2, 3)$ is injective,
  so $C \cong \mathbb{Z}_{6}$. However, it is impossible to transform
  a generator matrix $(2 \; 3)$ into $(1 \; a)$ or $(a \; 1)$
  only by the elementary row operations and coordinate permutations.
  For more details on the discussion for a finite commutative local ring,
  see \cite[Section~3.2]{SystematicForm}.
\end{rem}

The \emph{support} of each vector $\bm{x} = (x_{i})_{i \in E} \in R^{E}$
is defined as:
\begin{equation*}
  \supp(\bm{x}) \coloneqq \{ i \in E \mid x_{i} \neq 0 \}.
\end{equation*}
For each subset $X \subseteq E$ and an $R$-code $C$,
define the \emph{punctured code} $C^{X}$ to be the $R$-submodule of $R^{E \setminus X}$,
obtained by deleting the coordinates $X$ from each codeword of $C$.
Also, define the \emph{shortened code} $C_{X}$ to be the $R$-submodule of $R^{E \setminus X}$
obtained by deleting the coordinates $X$ from each codeword $\bm{c} \in C$
with $\supp(\bm{c}) \cap X = \emptyset$. Formally,
\begin{align*}
  C^{X} &\coloneqq \{ \bm{c}|_{E \setminus X} : \bm{c} \in C \} \leq R^{E \setminus X}; \\
  C_{X} &\coloneqq \{ \bm{c}|_{E \setminus X} : \bm{c} \in C, \, \supp(\bm{c}) \cap X = \emptyset \} \leq R^{E \setminus X},
\end{align*}
respectively, where $f|_{S}$ is the restriction of $f \colon E \to R$ to $S \subseteq E$.

\begin{lem}[{\cite[Lemma~1]{DemiMatroid}}] \label{lem:duality_for_puncturing_and_shortening}
  For any $R$-code $C$ and any subset $X \subseteq E$,
  \begin{equation*}
    (C_{X})^{\perp} = (C^{\perp})^{X}
    \quad \text{and} \quad
    (C^{X})^{\perp} = (C^{\perp})_{X}.
  \end{equation*}
\end{lem}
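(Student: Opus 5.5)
The second identity holds over any commutative ring, so I would prove it first by a direct computation. The first identity I would then derive from the second by dualising, which needs the double-dual property $D^{\perp\perp} = D$ for codes $D$ over $R$. That property holds over finite Frobenius rings, and in particular over the finite commutative chain rings used later in the paper. The lemma should be read under that standing hypothesis (as in \cite{DemiMatroid}).

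\emph{Step 1: $(C^{X})^{\perp} = (C^{\perp})_{X}$.} For $\bm{y} \in R^{E \setminus X}$, let $\tilde{\bm{y}} \in R^{E}$ be its extension by zero on $X$. Then for every $\bm{c} \in C$,
\begin{equation*}
  \tilde{\bm{y}} \cdot \bm{c} = \bm{y} \cdot \bm{c}|_{E \setminus X}.
\end{equation*}
Hence $\bm{y} \in (C^{X})^{\perp}$ if and only if $\tilde{\bm{y}} \in C^{\perp}$. Since $\supp(\tilde{\bm{y}}) \cap X = \emptyset$, the latter holds if and only if $\bm{y} = \tilde{\bm{y}}|_{E \setminus X} \in (C^{\perp})_{X}$. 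Conversely, every element of $(C^{\perp})_{X}$ is the restriction of some $\bm{x} \in C^{\perp}$ vanishing on $X$, and such an $\bm{x}$ equals the zero-extension of its restriction. So the two sets coincide, and this uses nothing beyond bilinearity of the inner product.

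\emph{Step 2: $(C_{X})^{\perp} = (C^{\perp})^{X}$.} Apply Step 1 to the code $C^{\perp}$ in place of $C$. This gives
\begin{equation*}
  \bigl((C^{\perp})^{X}\bigr)^{\perp} = (C^{\perp\perp})_{X} = C_{X},
\end{equation*}
where the last equality uses $C^{\perp\perp} = C$. Taking duals once more in $R^{E \setminus X}$ and applying the double-dual property to the code $(C^{\perp})^{X}$ yields
\begin{equation*}
  (C^{\perp})^{X} = (C_{X})^{\perp}.
\end{equation*}
One inclusion, $(C^{\perp})^{X} \subseteq (C_{X})^{\perp}$, can also be checked directly and holds over any ring. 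If $\bm{x} \in C^{\perp}$ and $\bm{c} \in C$ vanishes on $X$, then $\bm{x}|_{E\setminus X} \cdot \bm{c}|_{E \setminus X} = \bm{x} \cdot \bm{c} = 0$.

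\emph{Main obstacle.} The only nontrivial point is the reverse inclusion in Step 2, which rests entirely on $D^{\perp\perp} = D$. I would justify this via the standard fact that over a finite Frobenius ring $R$ one has $\abs{D}\,\abs{D^{\perp}} = \abs{R}^{\abs{E}}$. Applying this to $D$ and to $D^{\perp}$ gives $\abs{D^{\perp\perp}} = \abs{D}$, and together with the trivial inclusion $D \subseteq D^{\perp\perp}$ this gives equality. For finite chain rings one can see this concretely: $R$ has a unique minimal ideal $\mathfrak{m}^{\nu-1}$, so $R$ is Frobenius, and the cardinality identity follows from character theory or from the invariant factor decomposition of Lemma~\ref{lem:decomposition_PIR}.

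The lemma genuinely needs this hypothesis. Without it, only the inclusion $(C^{\perp})^{X} \subseteq (C_{X})^{\perp}$ survives, because over non-Frobenius rings $D^{\perp\perp}$ can strictly contain $D$.
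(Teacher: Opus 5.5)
The paper gives no proof of its own here; it imports the lemma from \cite{DemiMatroid}, where codes are taken over finite Frobenius rings. Your argument is correct and is the standard one. The zero-extension computation gives $(C^{X})^{\perp} = (C^{\perp})_{X}$ over any commutative ring. The other identity then follows by applying this to $C^{\perp}$ and dualising twice, using $D^{\perp\perp} = D$. You justify that property correctly from $\abs{D}\,\abs{D^{\perp}} = \abs{R}^{\abs{E}}$, together with the fact that a finite chain ring has the unique minimal ideal $\mathfrak{m}^{\nu-1}$ and is therefore Frobenius.

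Your caveat is genuinely needed and not just a technicality. Under the standing hypothesis in force where the lemma is stated ($R$ merely local), the first identity is false. For example, take the non-Frobenius ring $R = \mathbb{F}[x,y]/\langle x^{2}, y^{2}, xy \rangle$ of Example~\ref{eg:not_monotonic}, let $C = R(x,y) \leq R^{\{1,2\}}$, and let $X = \{1\}$.
\begin{itemize}
\item A multiple $r(x,y)$ has first coordinate $0$ only when $r \in \mathfrak{m}$, and then $ry = 0$ as well. Hence $C_{X} = \{0\}$ and $(C_{X})^{\perp} = R$.
\item On the other hand, $ax + by = 0$ forces $a, b \in \mathfrak{m}$. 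Hence $C^{\perp} = \mathfrak{m} \times \mathfrak{m}$ and $(C^{\perp})^{X} = \mathfrak{m} \neq R$.
\end{itemize}
Here also $C^{\perp\perp} = \mathfrak{m} \times \mathfrak{m} \supsetneq C$, which matches your diagnosis. So the lemma should be read, as you do, for finite Frobenius rings. That setting includes the finite chain rings of Sections~\ref{sect:matroid_repr_codes} and~\ref{sect:application}.
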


Because finite commutative chain rings will play a central role
throughout this paper, we briefly summarise the required background.
Since $R$ is finite, it is Artinian, and hence $\bigcap_{\ell \geq 1} \mathfrak{m}^{\ell} = \{ 0 \}$.
Therefore, by Lemma~\ref{lem:noetherian_chain_iff_local_PIR},
$R$ is a principal ideal ring; in particular, its maximal ideal
$\mathfrak{m}$ is principal. 
Fix a generator $\theta$ of $\mathfrak{m}$.
Since $R$ is a local Artinian ring, the maximal ideal $\mathfrak{m}$
is nilpotent (cf. \cite[Corollary~8.2 and Proposition~8.4]{AtiyahMacDonald}).
Let $\nu$ denote the \emph{nilpotency index} of $\mathfrak{m}$,
that is, the smallest integer such that $\mathfrak{m}^{\nu} = \{ 0 \}$.
Then the ideals of $R$ are exactly
\begin{equation*}
  R = \langle \theta^{0} \rangle \supsetneq
  \langle \theta^{1} \rangle \supsetneq \dots
  \supsetneq \langle \theta^{\nu - 1} \rangle \supsetneq
  \langle \theta^{\nu} \rangle = \{ 0 \}.
\end{equation*}

Let $q \coloneqq \abs{R/\mathfrak{m}}$.
Since $\mathfrak{m} = \langle \theta \rangle$,
for each $0 \leq i \leq \nu - 1$ multiplication by $\theta^{i}$
induces an isomorphism
\begin{equation*}
  R/\langle \theta \rangle \xrightarrow{\sim} \langle \theta^{i} \rangle/\langle \theta^{i+1} \rangle,
  \qquad
  \overline{r} \longmapsto r\theta^i + \langle \theta^{i + 1} \rangle.
\end{equation*}
Thus every successive quotient $\langle \theta^{i} \rangle/\langle \theta^{i+1} \rangle$
has cardinality $q$. It follows that
$\abs{\mathfrak{m}^{i}} = \abs{\langle \theta^{i} \rangle} = q^{\nu - i}$ for all $0 \leq i \leq \nu$,
and in particular, $\abs{R} = q^{\nu}$ and $\abs{\mathfrak{m}} = q^{\nu - 1}$.

The same chain of ideals also gives a valuation-like description of elements.
If $a \in R \setminus \{ 0 \}$, then the principal ideal
$\langle a \rangle$ must be one of the ideals $\mathfrak m^{t} = \langle \theta^{t} \rangle$,
say $\langle a \rangle = \langle \theta^{t} \rangle$, for a unique $t \in \{0, \dots, \nu-1\}$.
Writing $a = u\theta^{t}$, the coefficient $u$ cannot lie in $\langle \theta \rangle$;
for otherwise $a \in \langle \theta^{t+1} \rangle$, contradicting
$\langle a \rangle = \langle \theta^{t} \rangle$.
Hence, for every $a \in R \setminus \{ 0 \}$, there is an expression
$a = u\theta^{t}$ using a unit $u \in R^{\times}$ and a unique exponent $t$.
Define a function $\ord_{\theta} \colon R \to \{ 0, \dots, \nu \}$ as
\begin{equation*}
  \ord_{\theta}(a)
  \coloneqq
  \max\{ t \in \{ 0, \dots, \nu \} : a \in \langle \theta^{t} \rangle\}.
\end{equation*}
Then,
\begin{equation*}
  \abs{\{a \in R : \ord_{\theta}(a) = t \}}
  = \abs{\langle \theta^{t} \rangle} - \abs{\langle \theta^{t+1} \rangle}
  = q^{\nu-t-1}(q-1)
\end{equation*}
for $0 \leq t \leq \nu-1$.

This description of elements immediately identifies the cyclic $R$-modules.
Now let $\bm{v} \in R^{E}$ be non-zero. The map
$\varphi_{\bm{v}} \colon R \to \langle \bm{v} \rangle, \; a \mapsto a\bm{v}$,
is surjective, and its kernel is $\Ann_{R}(\bm{v})$.
Hence, by the first isomorphism theorem, $\langle \bm{v} \rangle \cong R/\Ann_{R}(\bm{v})$.
Since every ideal of $R$ is a power of $\langle \theta \rangle$,
every non-zero cyclic $R$-module is isomorphic to $R/\langle \theta^{s} \rangle$
for a unique $s \in \{1, \dots, \nu\}$, or equivalently
to $\langle \theta^{\nu-s} \rangle$.
Moreover, a cyclic module of type $R/\langle \theta \rangle^{s}$ has exactly
$q^{s-1}(q-1)$ generators, because its non-generators are precisely
the elements of its unique maximal submodule.

These cyclic modules are the building blocks of all finitely generated
$R$-modules. Since $R$ is a PIR by Lemma~\ref{lem:noetherian_chain_iff_local_PIR},
every finitely generated $R$-module admits an invariant factor decomposition.
Equivalently, every matrix $A \in R^{k \times n}$ admits a \emph{Smith normal form}
(abbreviated SNF): there exist invertible matrices $P \in R^{k \times k}$
and $Q \in R^{n \times n}$ such that
\begin{equation*}
  PAQ = \diag(\theta^{\lambda_{1}}, \dots, \theta^{\lambda_{r}}, 0, \dots, 0)
\end{equation*}
with
\begin{equation*}
  0 \le \lambda_1 \le \dots \le \lambda_r \le \nu-1.
\end{equation*}
See \cite[Chapter~15]{Brown93} for more details on SNF over commutative PIRs.
Applied to a generator matrix of a code $C \leq R^{E}$,
this determines the isomorphism type of $C$ as an $R$-module.
However, Smith normal form uses column operations and therefore
does not preserve the embedded code $C \subseteq R^{E}$.
For coding-theoretic purposes one therefore uses a standard form
generator matrix, obtained by row operations together with
a permutation of the coordinates.

For later enumeration, it is convenient to record the following.
For a finite $R$-module $U$ and $s \in \{ 0, \dots, \nu \}$, define
\begin{equation*}
  U[\theta^{s}] \coloneqq \{ \bm{u} \in U \mid \theta^{s} \bm{u} = \bm{0} \}.
\end{equation*}
If
\begin{equation*}
  U \cong R^{k_0} \oplus \bigoplus_{i=1}^{\nu-1}
  \bigl( R/\langle \theta^{\nu-i} \rangle \bigr)^{k_{i}},
\end{equation*}
then each summand $R/\langle \theta^{\nu-i} \rangle$ contributes
$q^{\min(s,\nu-i)}$ elements to $U\lbrack \theta^{s} \rbrack$.
Indeed, for a summand $R/\langle \theta^{r} \rangle$,
the elements killed by $\theta^{s}$ form the whole summand if $s \geq r$,
and otherwise the submodule $\langle \theta^{r-s} \rangle/\langle \theta^{r} \rangle$.
Hence there are exactly $q^{\min(s,r)}$ such elements.
Therefore,
\begin{equation*}
  \abs{U\lbrack \theta^{s} \rbrack}
  = q^{\sum_{i=0}^{\nu-1} k_{i} \min(s, \nu-i)}.
\end{equation*}
Hence we have the following counting formula:
\begin{prop} \label{prop:counting_formula}
  The number $N_{U}(s)$ of cyclic submodules of $U$ isomorphic to
  $R/\langle \theta^{s} \rangle$ is
  \begin{equation*}
    N_{U}(s)
    =
    \frac{\abs{U\lbrack \theta^{s} \rbrack} - \abs{U\lbrack \theta^{s-1} \rbrack}}{q^{s-1}(q-1)}
    \qquad (1 \le s \le \nu).
  \end{equation*}
\end{prop}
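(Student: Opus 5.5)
The argument is a double count: first characterise the elements of $U$ that generate a cyclic submodule isomorphic to $R/\langle \theta^{s} \rangle$, then count those elements and divide by the number of generators of each such submodule.

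\textbf{Step 1 (characterising generators).} For $\bm{u} \in U$, the discussion preceding the proposition gives $\langle \bm{u} \rangle \cong R/\Ann_{R}(\bm{u})$. Since every ideal of $R$ is a power of $\langle \theta \rangle$, we have $\Ann_{R}(\bm{u}) = \langle \theta^{t} \rangle$ for a unique $t \in \{0, \dots, \nu\}$, and this $t$ is the smallest exponent with $\theta^{t}\bm{u} = \bm{0}$. Moreover, two modules $R/\langle \theta^{s} \rangle$ and $R/\langle \theta^{t} \rangle$ are isomorphic only when $s = t$, because their cardinalities are $q^{s}$ and $q^{t}$. It follows that $\langle \bm{u} \rangle \cong R/\langle \theta^{s} \rangle$ if and only if $\theta^{s}\bm{u} = \bm{0}$ and $\theta^{s-1}\bm{u} \neq \bm{0}$, that is, if and only if
\begin{equation*}
  \bm{u} \in U[\theta^{s}] \setminus U[\theta^{s-1}].
\end{equation*}
Since $U[\theta^{s-1}] \subseteq U[\theta^{s}]$, the number of such elements is $\abs{U[\theta^{s}]} - \abs{U[\theta^{s-1}]}$.

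\textbf{Step 2 (double counting).} Consider the map sending each $\bm{u} \in U[\theta^{s}] \setminus U[\theta^{s-1}]$ to the submodule $\langle \bm{u} \rangle$. By Step 1, its image is exactly the set of cyclic submodules of $U$ isomorphic to $R/\langle \theta^{s} \rangle$. The fibre over such a submodule $D$ is the set of generators of $D$. By the fact recorded earlier, a cyclic module of type $R/\langle \theta \rangle^{s}$ has exactly $q^{s-1}(q-1)$ generators, since its non-generators are precisely the elements of its unique maximal submodule $\mathfrak{m}D$, which has $q^{s-1}$ elements. Every fibre therefore has size $q^{s-1}(q-1)$, and dividing the count from Step 1 by this number gives the stated formula for $N_{U}(s)$.

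\textbf{Main obstacle.} The only delicate point is in Step 1: the isomorphism type of $\langle \bm{u} \rangle$ must be determined by the annihilator exponent alone, so that the condition ``$\langle \bm{u} \rangle \cong R/\langle \theta^{s} \rangle$'' coincides exactly with membership in $U[\theta^{s}] \setminus U[\theta^{s-1}]$. This is settled by the fact that the annihilator of $\bm{u}$ is a power of $\langle \theta \rangle$, together with the cardinality comparison $q^{s}$ versus $q^{t}$. Everything else is routine.
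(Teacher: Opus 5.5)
Your proposal is correct and takes the same approach as the paper. The paper also double-counts: the numerator counts elements annihilated by $\theta^{s}$ but not by $\theta^{s-1}$, and the denominator $q^{s-1}(q-1)$ is the number of generators of a cyclic module of type $R/\langle \theta^{s} \rangle$. The paper presents this only as intuition and points to a general enumeration result, whereas your Step 1 spells out why the isomorphism type of $\langle \bm{u} \rangle$ is determined by the annihilator exponent.
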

This counting formula is a special case of the enumeration of
submodules of prescribed shape due to \cite[Theorem~2.4]{HL00}.
Intuitively, the numerator counts the elements annihilated by $\theta^{s}$
but not by $\theta^{s-1}$, while the denominator is the
number of generators of a cyclic module of type $R/\langle \theta^{s} \rangle$.

We isolate some special cases needed later for representations.

\begin{cor}\label{cor:counting_formula}
  The number of distinct cyclic submodules $\langle \bm{v} \rangle \leq R^{k}$
  generated by primitive vectors in $R^{k}$ is
  \begin{equation*}
    \frac{q^{\nu k}-q^{(\nu-1)k}}{q^\nu-q^{\nu-1}}
    =
    q^{(\nu-1)(k-1)}\frac{q^k-1}{q-1}.
  \end{equation*}
  In particular, the number of cyclic submodules $\langle \bm{v} \rangle \leq R^{2}$
  with $\bm{v} \notin \theta R^{2}$ is
  \begin{equation*}
    q^{\nu} + q^{\nu-1} = \abs{R} + \abs{\mathfrak{m}}.
  \end{equation*}
\end{cor}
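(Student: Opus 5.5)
We apply Proposition~\ref{prop:counting_formula} with $U = R^{k}$, which is free, so in the decomposition $k_{0} = k$ and $k_{i} = 0$ for $i \geq 1$. The formula for $\abs{U\lbrack \theta^{s} \rbrack}$ then gives $\abs{R^{k}\lbrack \theta^{s} \rbrack} = q^{ks}$ for $0 \leq s \leq \nu$.

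The first task is to identify which cyclic submodules are generated by primitive vectors. We claim these are exactly the cyclic submodules isomorphic to $R/\langle \theta^{\nu} \rangle = R$. Write a non-zero $\bm{v} \in R^{k}$ as $\bm{v} = (v_{1}, \dots, v_{k})$ and let $t = \min_{i} \ord_{\theta}(v_{i})$. Using $v_{i} = u_{i}\theta^{t_{i}}$, we get $\Ann_{R}(\bm{v}) = \langle \theta^{\nu - t} \rangle$, so $\langle \bm{v} \rangle \cong R/\langle \theta^{\nu - t} \rangle$. Since $\mathfrak{m}R^{k} = \theta R^{k}$, the vector $\bm{v}$ is primitive if and only if $t = 0$, which holds if and only if $\langle \bm{v} \rangle \cong R$.

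This characterisation does not depend on the choice of generator. Every generator of $\langle \bm{v} \rangle$ has the form $u\bm{v}$ with $u$ a unit, and $\ord_{\theta}$ is invariant under multiplication by units. So the desired count is $N_{R^{k}}(\nu)$.

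Substituting $s = \nu$ into Proposition~\ref{prop:counting_formula} gives
\begin{equation*}
  N_{R^{k}}(\nu) = \frac{q^{\nu k} - q^{(\nu-1)k}}{q^{\nu-1}(q-1)} = \frac{q^{\nu k}-q^{(\nu-1)k}}{q^{\nu}-q^{\nu-1}}.
\end{equation*}
We then factor the numerator as $q^{(\nu-1)k}(q^{k}-1)$ and the denominator as $q^{\nu-1}(q-1)$. The quotient simplifies to $q^{(\nu-1)(k-1)}\frac{q^{k}-1}{q-1}$.

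For the special case $k = 2$, this becomes $q^{\nu-1}(q+1) = q^{\nu} + q^{\nu-1}$. This equals $\abs{R} + \abs{\mathfrak{m}}$, by the cardinalities $\abs{R} = q^{\nu}$ and $\abs{\mathfrak{m}} = q^{\nu-1}$ recorded earlier. The condition $\bm{v} \notin \theta R^{2}$ is precisely primitivity, since $\mathfrak{m} = \langle \theta \rangle$.

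The main obstacle is justifying that primitive vectors correspond exactly to cyclic submodules of type $R/\langle \theta^{\nu} \rangle$, which is the annihilator computation above. As a cross-check that avoids Proposition~\ref{prop:counting_formula}, one can count directly. There are $q^{\nu k} - q^{(\nu-1)k}$ primitive vectors, namely $\abs{R^{k}} - \abs{\theta R^{k}}$. Each free cyclic submodule has $\abs{R^{\times}} = q^{\nu-1}(q-1)$ generators, and all of them are primitive. Dividing gives the same formula.
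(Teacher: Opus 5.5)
Your proof is correct, but your main route differs from the paper's. The paper does not invoke Proposition~\ref{prop:counting_formula}. It counts the $q^{\nu k}-q^{(\nu-1)k}$ primitive vectors of $R^{k}$ directly and divides by $\abs{R^{\times}}=q^{\nu}-q^{\nu-1}$. For this it uses two facts: two primitive vectors span the same cyclic submodule iff they differ by a unit, and each such submodule has exactly $\abs{R^{\times}}$ primitive generators. That is precisely your closing ``cross-check''.

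Your primary argument instead specialises the general enumeration $N_{U}(s)$ to $U=R^{k}$ and $s=\nu$. This needs the extra identification of primitive-generated cyclic submodules with those isomorphic to $R/\langle\theta^{\nu}\rangle$. Your computation $\Ann_{R}(\bm{v})=\langle\theta^{\nu-t}\rangle$ with $t=\min_{i}\ord_{\theta}(v_{i})$ supplies this cleanly.

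Your route explains why the statement really is a corollary of Proposition~\ref{prop:counting_formula}. In fact, for free $U$ one has $U[\theta^{\nu-1}]=\theta R^{k}$, so the numerator there is literally the number of primitive vectors, and the two computations coincide. The cost is that your route rests on that more general proposition, whose justification in the paper is only sketched.

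The paper's direct count is more self-contained. However, its claim that each submodule has exactly $\abs{R^{\times}}$ primitive generators tacitly needs $\Ann_{R}(\bm{v})=0$ for primitive $\bm{v}$, so that $u\bm{v}=u'\bm{v}$ forces $u=u'$. That is exactly what your annihilator computation establishes.
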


\begin{proof}
  The number of primitive vectors in $R^{k}$ is
  $\abs{R}^{k} - \abs{\mathfrak{m}}^{k} = q^{\nu k}-q^{(\nu - 1)k}$.
  Two primitive vectors in $R^{k}$ generate the same cyclic submodule
  iff they differ by a unit scalar,
  and each such submodule has exactly
  $\abs{R^{\times}} = \abs{R} - \abs{\mathfrak{m}} = q^{\nu} - q^{\nu-1}$
  primitive generators. Dividing yields the former claim.
  The latter claim is obtained by substituting $k = 2$.
\end{proof}

For more details on the structure of linear codes over finite commutative chain rings,
see \cite{HL00, HL09, NS00}.

We now return to general local commutative rings.
Modular independence is originally defined by Park \cite{Park09},
and generalised to the case of finite commutative Frobenius rings
by Dougherty and Liu \cite{DoughertyLiu09}.
In this paper, we define modular independence under the assumption of
locality, but without finiteness. We conclude this section with some
properties of modular independence over local commutative rings.

\begin{dfn}[{\cite[Definition~1]{DoughertyLiu09}}] \label{dfn:modular_independence}
  Let $R$ be a local commutative ring with unique maximal ideal
  $\mathfrak{m}$. Let $V$ be an $R$-module.
  $\bm{v}_{1}, \bm{v}_{2}, \dots, \bm{v}_{\ell} \in V$ are
  \emph{modular independent} if
  $\alpha_{1} \bm{v}_{1} + \dots + \alpha_{\ell} \bm{v}_{\ell} = \bm{0}$
  implies $\alpha_{i}$ is a non-unit, that is,
  $\alpha_{i} \in \mathfrak{m}$, for all $i = 1, 2, \dots, \ell$;
  otherwise they are said to be \emph{modular dependent}.
\end{dfn}

\begin{rem} \label{rem:modular_indep_with_syzygy}
  In other words, $\bm{v}_{1}, \dots, \bm{v}_{\ell} \in V$ are
  modular independent if the first syzygy of
  $V^{\prime} \coloneqq \langle \bm{v}_{1}, \dots, \bm{v}_{\ell} \rangle_{R}$
  with respect to $\bm{v}_{1}, \dots, \bm{v}_{\ell}$ defined as
  the kernel of the surjection
  \begin{equation*}
    \varphi_{\bm{v}_{1}, \dots, \bm{v}_{\ell}} \colon R^{\ell} \to V^{\prime}, \;
    (\alpha_{1}, \dots, \alpha_{\ell}) \mapsto \sum_{i = 1}^{\ell} \alpha_{i} \bm{v}_{i}
  \end{equation*}
  is included in $\mathfrak{m}R^{\ell}$ , that is,
  \begin{equation*}
\Ker \varphi_{\bm{v}_{1}, \dots, \bm{v}_{\ell}}
    =  \left\{ (\alpha_{1}, \dots, \alpha_{\ell}) \in R^{\ell} \mathrel{}\middle|\mathrel{} \sum_{i = 1}^{\ell} \alpha_{i} \bm{v}_{i} = \bm{0} \right\} \subseteq \mathfrak{m}R^{\ell}.
  \end{equation*}
\end{rem}

As in the case of linear independence,
modular dependence is characterised in terms of linear combinations.

\begin{lem}[{\cite[Lemma~3.1]{DoughertyLiu09}}] \label{lem:modular_dep_lin_comb}
  Let $V$ be an $R$-module.
  Then $\bm{v}_{1}, \dots, \bm{v}_{\ell} \in V$ are modular dependent
  if and only if some $\bm{v}_{j}$ is a linear combination of
  the other vectors. 
\end{lem}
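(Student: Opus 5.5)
Both implications follow directly from Definition~\ref{dfn:modular_independence}, using only that $R$ is local, so $R^{\times} = R \setminus \mathfrak{m}$ and every element outside $\mathfrak{m}$ is invertible.

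For the forward direction, suppose $\bm{v}_{1}, \dots, \bm{v}_{\ell}$ are modular dependent. By definition there is a relation
\begin{equation*}
  \alpha_{1}\bm{v}_{1} + \dots + \alpha_{\ell}\bm{v}_{\ell} = \bm{0}
\end{equation*}
in which some coefficient $\alpha_{j}$ is not in $\mathfrak{m}$. Locality then makes $\alpha_{j}$ a unit. Multiplying the relation by $-\alpha_{j}^{-1}$ and isolating the $j$-th term gives
\begin{equation*}
  \bm{v}_{j} = \sum_{i \neq j} (-\alpha_{j}^{-1}\alpha_{i})\bm{v}_{i},
\end{equation*}
so $\bm{v}_{j}$ is an $R$-linear combination of the other vectors.

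For the converse, suppose $\bm{v}_{j} = \sum_{i \neq j} \beta_{i}\bm{v}_{i}$ for some $\beta_{i} \in R$. Rearranging gives the relation
\begin{equation*}
  \bm{v}_{j} - \sum_{i \neq j} \beta_{i}\bm{v}_{i} = \bm{0},
\end{equation*}
whose $j$-th coefficient is $1$. Since $1 \notin \mathfrak{m}$, this relation has a unit coefficient, so the vectors are modular dependent by Definition~\ref{dfn:modular_independence}.

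Two edge cases need a remark. When $\ell = 1$, the "other vectors" form the empty family, whose only linear combination is $\bm{0}$. The lemma then says $\bm{v}_{1}$ is modular dependent if and only if $\bm{v}_{1} = \bm{0}$, which agrees with the definition: $\alpha\bm{v}_{1} = \bm{0}$ with $\alpha$ a unit forces $\bm{v}_{1} = \bm{0}$. If $R$ is a field, then $\mathfrak{m} = \{0\}$ and the argument reduces to the classical one.

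The only non-trivial ingredient is the identification $R \setminus \mathfrak{m} = R^{\times}$ for a local ring, which is what allows us to divide by $\alpha_{j}$. No finiteness or chain condition is needed, so I do not expect any real obstacle.
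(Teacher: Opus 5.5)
Your proof is correct. Locality makes every coefficient outside $\mathfrak{m}$ a unit, so a dependence relation can be solved for the term with a unit coefficient. Conversely, an expression $\bm{v}_{j} = \sum_{i \neq j} \beta_{i} \bm{v}_{i}$ gives a relation whose $j$-th coefficient is $1 \notin \mathfrak{m}$.

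The paper gives no proof of its own and only cites Dougherty--Liu. Your unit-division argument is the standard verification that citation supplies. Your $\ell = 1$ remark is right, and so is your index-wise reading of ``the other vectors'', which the paper relies on when columns repeat.
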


For a pair of vectors, Lemma~\ref{lem:modular_dep_lin_comb} specialises as follows.

\begin{cor}\label{cor:two_vectors_mod_dep}
  Let $R$ be a local commutative ring,
  and let $\bm{v}, \bm{w}$ be non-zero vectors in an $R$-module $V$.
  Then $\{ \bm{v}, \bm{w} \}$ is modular dependent if and only if
  $\bm{v} \in \langle \bm{w} \rangle$ or $\bm{w} \in \langle \bm{v} \rangle$.
\end{cor}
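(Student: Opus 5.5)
The statement follows by specialising Lemma~\ref{lem:modular_dep_lin_comb} to two vectors and treating the two directions separately. Both vectors are assumed non-zero, so the only point needing care is what a linear combination of the empty family means, and we check the equivalence directly from Definition~\ref{dfn:modular_independence} rather than leaning on that convention.

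For the direction ($\Leftarrow$), suppose $\bm{v} \in \langle \bm{w} \rangle$, say $\bm{v} = a\bm{w}$ with $a \in R$. Then
\begin{equation*}
  1\cdot\bm{v} + (-a)\bm{w} = \bm{0}.
\end{equation*}
This is a relation in which the coefficient of $\bm{v}$ is the unit $1 \notin \mathfrak{m}$. By Definition~\ref{dfn:modular_independence}, $\{\bm{v},\bm{w}\}$ is therefore modular dependent. The case $\bm{w} \in \langle \bm{v} \rangle$ is symmetric.

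For the direction ($\Rightarrow$), suppose $\alpha\bm{v} + \beta\bm{w} = \bm{0}$ with at least one of $\alpha, \beta$ a unit. If $\alpha \in R^{\times}$, then
\begin{equation*}
  \bm{v} = -\alpha^{-1}\beta\,\bm{w} \in \langle \bm{w} \rangle.
\end{equation*}
If $\beta \in R^{\times}$, then symmetrically $\bm{w} = -\beta^{-1}\alpha\,\bm{v} \in \langle \bm{v} \rangle$. This is exactly the content of Lemma~\ref{lem:modular_dep_lin_comb} for $\ell = 2$, and it is the only place locality is used: every coefficient not in $\mathfrak{m}$ is invertible.

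There is no real obstacle here. The only subtlety is that the phrase ``$\{\bm v,\bm w\}$'' should be read as a pair of possibly equal vectors. If $\bm v = \bm w$, both sides of the equivalence hold trivially, so the statement remains correct. The non-zero hypothesis is not actually needed for the argument; it only excludes degenerate cases that are irrelevant later.
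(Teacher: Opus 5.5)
Your proof is correct and follows the paper's route: the paper gets ($\Rightarrow$) by citing Lemma~\ref{lem:modular_dep_lin_comb} and ($\Leftarrow$) directly from Definition~\ref{dfn:modular_independence}, and you simply unfold the $\ell = 2$ case of that lemma by inverting whichever coefficient is a unit. Your remark that $\{\bm{v}, \bm{w}\}$ must be read as a pair (an indexed family) rather than a set is apt: if $\bm{v} = \bm{w} \neq \bm{0}$, the singleton set $\{\bm{v}\}$ would be modular independent even though $\bm{v} \in \langle \bm{w} \rangle$.
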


\begin{proof}
  If $\{ \bm{v}, \bm{w}\}$ is modular dependent, then Lemma~\ref{lem:modular_dep_lin_comb}
  implies that one of $\bm{v}, \bm{w}$ is a linear combination of the other.
  The converse is immediate from Definition~\ref{dfn:modular_independence}.
\end{proof}

Using the determinant, we give a sufficient condition for vectors to be modular independent.

\begin{prop} \label{prop:nonzero_minor_implies_modular_indep}
  Let $\bm{v}_{1}, \dots, \bm{v}_{\ell} \in R^{k}$ where $\ell \leq k$.
  Let $A$ be the $k \times \ell$ matrix whose columns are
  $\bm{v}_{1}, \dots, \bm{v}_{\ell}$.
  If there exists a non-zero $\ell \times \ell$ minor of $A$,
  then $\bm{v}_{1}, \dots, \bm{v}_{\ell}$ are modular independent.
\end{prop}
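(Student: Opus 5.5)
Suppose $\alpha_{1}\bm{v}_{1}+\dots+\alpha_{\ell}\bm{v}_{\ell}=\bm{0}$ with $\bm{\alpha}=(\alpha_{1},\dots,\alpha_{\ell})^{\top}\in R^{\ell}$, i.e.\ $A\bm{\alpha}=\bm{0}$. The plan is to pass to an $\ell\times\ell$ submatrix $B=A_{J,\{1,\dots,\ell\}}$ with $\det B\neq 0$. Such a $B$ exists by hypothesis, and $B\bm{\alpha}=\bm{0}$ because its rows are a subset of the rows of $A$. Multiplying by the adjugate gives
\begin{equation*}
  \det(B)\,\alpha_{i}=0 \qquad \text{for all } i=1,\dots,\ell,
\end{equation*}
using the identity $\operatorname{adj}(B)B=\det(B)I_{\ell}$, which holds over any commutative ring.

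Now argue by contradiction. Suppose some $\alpha_{i}$ is a unit, so $\alpha_{i}\in R^{\times}=R\setminus\mathfrak{m}$. Multiplying $\det(B)\alpha_{i}=0$ by $\alpha_{i}^{-1}$ gives $\det(B)=0$, contradicting the choice of $B$. Hence every $\alpha_{i}$ lies in $\mathfrak{m}$, and by Definition~\ref{dfn:modular_independence} the vectors $\bm{v}_{1},\dots,\bm{v}_{\ell}$ are modular independent.

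There is no real obstacle here. The only points to check are that the adjugate identity is valid over a commutative ring, and that the relation $A\bm{\alpha}=\bm{0}$ restricts to the chosen rows; both are standard. Locality enters only through the identification of non-units with $\mathfrak{m}$.
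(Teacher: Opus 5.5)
Your proof is correct and follows essentially the same route as the paper: pick an $\ell\times\ell$ submatrix with non-zero determinant, use the adjugate (cofactor) identity to get $\det(B)\,\alpha_i=0$, and conclude that no $\alpha_i$ can be a unit. Your explicit remark that the relation $A\bm{\alpha}=\bm{0}$ restricts to the chosen rows cleanly justifies the paper's closing step ``and so are those in $A$''. Your $\alpha_i^{-1}$ argument likewise makes precise the paper's ``forces $a_i\in\mathfrak{m}$''.
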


\begin{proof}
  Let $A^{\prime}$ be an $\ell \times \ell$ submatrix of $A$ such that
  $\det A^{\prime} \neq 0$. Take the cofactor matrix $\widetilde{A^{\prime}}$
  of $A^{\prime}$, then
  $\widetilde{A^{\prime}}A^{\prime} = (\det A^{\prime})I_{\ell}$,
  where $I_{\ell}$ is the $\ell$-th identity matrix over $R$.
  Define $\varphi_{A^{\prime}} \colon R^{\ell} \to R^{\ell}$
  as $\varphi_{A^{\prime}}(\bm{a}) = A^{\prime}\bm{a}$ for all $\bm{a} \in R^{\ell}$.
  If $\bm{a} \in \Ker \varphi_{A^{\prime}}$, then
  \begin{equation*}
    (\det A^{\prime}) \bm{a}
    = (\det A^{\prime}) I_{\ell} \bm{a}
    = \widetilde{A^{\prime}}A^{\prime} \bm{a}
    = \widetilde{A^{\prime}} \bm{0}
    = \bm{0}.
  \end{equation*}
  The linear equation $(\det A^{\prime}) a_{i} = 0$ for each coordinate of $\bm{a} = (a_{1}, \dots, a_{\ell})^{\top}$
  forces $a_{i} \in \mathfrak{m}$ for all $i \in \{ 1, \dots, \ell \}$
  because $\det A^{\prime} \neq 0$.
  Thus, we have $\bm{a} \in \mathfrak{m} R^{\ell}$.
  Therefore, the column vectors in $A^{\prime}$ are modular independent,
  and so are those in $A$.
\end{proof}

For all integers $m \geq 2$,
let $\mathbb{Z}_{m} \coloneqq \mathbb{Z}/m\mathbb{Z}$ denote the ring of integers modulo $m$.

\begin{eg}
  Since
  $\det \left( \begin{smallmatrix} 1 & 1 \\ 0 & 2 \end{smallmatrix} \right) = 2 \neq 0$
  over $\mathbb{Z}_{4}$, $(1, 0)^{\top}, (1, 2)^{\top} \in \mathbb{Z}_{4}^{2}$
  are modular independent.
  However, the converse of Proposition~\ref{prop:nonzero_minor_implies_modular_indep} does not necessarily hold.
  Indeed, $(2, 0)^{\top}, (0, 2)^{\top} \in \mathbb{Z}_{4}^{2}$
  are modular independent although
  $\det \left( \begin{smallmatrix} 2 & 0 \\ 0 & 2 \end{smallmatrix} \right) = 0$
  over $\mathbb{Z}_{4}$.
\end{eg}
 
\section{Representation of Matroids by Modular Independence} \label{sect:matroid_repr_by_modular_indep}

In this section, we generalise the construction of vector matroids
over a field to a local commutative ring by using modular independence.
No finiteness assumption on the ring $R$ will be imposed
in this section, in line with the matroid-theoretic convention
of treating representations over infinite fields
(such as $\mathbb{R}$ and $\mathbb{C}$) on the same footing as
those over finite fields.
One of the fundamental ways to construct a matroid is as follows
(see \cite{Oxley} and \cite{Welsh}):

\begin{prop} \label{prop:vector_matroid}
  Let $A \in V^{E}$ be a mapping from a finite set $E$
  to a vector space $V$ over $\mathbb{F}$.
  If $\mathcal{I}$ is the collection of subsets $I \subseteq E$
  such that $(A(i))_{i \in I}$ is linearly independent over $\mathbb{F}$,
  then $(E, \mathcal{I})$ is a matroid.
\end{prop}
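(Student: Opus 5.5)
We verify (I1)--(I3) directly for the family $\mathcal{I}$ of subsets $I\subseteq E$ whose vectors $(A(i))_{i\in I}$ are linearly independent over $\mathbb{F}$. Throughout, a family indexed by $I$ is independent only if $A$ is injective on $I$; a repeated vector gives the nontrivial relation $A(i)-A(j)=\bm{0}$.

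Axiom (I1) holds because the empty family is linearly independent: the only linear combination of no vectors is the empty sum, and there are no coefficients to check.

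For (I2), take $I\in\mathcal{I}$ and $I'\subseteq I$. Suppose some relation $\sum_{i\in I'}\alpha_i A(i)=\bm{0}$ holds. Extend it by zero coefficients on $I\setminus I'$ to obtain a relation over $I$. Independence of $I$ forces every coefficient to vanish, in particular $\alpha_i=0$ for $i\in I'$. Hence $I'\in\mathcal{I}$.

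The substantive step is the augmentation axiom (I3), and we prove it by dimension counting. Let $I_1,I_2\in\mathcal{I}$ with $\abs{I_1}<\abs{I_2}$, and set $W\coloneqq\langle A(i): i\in I_1\rangle$. Then $\dim_{\mathbb{F}}W=\abs{I_1}$, since an independent spanning family is a basis.

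Suppose, for a contradiction, that $I_1\cup\{e\}\notin\mathcal{I}$ for every $e\in I_2\setminus I_1$. For such an $e$ there is a nontrivial relation
\begin{equation*}
  \beta A(e)+\sum_{i\in I_1}\alpha_i A(i)=\bm{0}.
\end{equation*}
Here $\beta\neq 0$: otherwise the relation would be a nontrivial one among the vectors of $I_1$, contradicting $I_1\in\mathcal{I}$. Since $\mathbb{F}$ is a field, we may divide by $\beta$, and so $A(e)\in W$. For $e\in I_1\cap I_2$ we have $A(e)\in W$ trivially. Thus all $A(e)$ with $e\in I_2$ lie in $W$.

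These vectors are $\abs{I_2}$ linearly independent elements of a space of dimension $\abs{I_1}<\abs{I_2}$. This contradicts the Steinitz exchange lemma, which says that an independent family in an $\mathbb{F}$-vector space has at most $\dim$ many members. Therefore some $e\in I_2\setminus I_1$ satisfies $I_1\cup\{e\}\in\mathcal{I}$, and $(E,\mathcal{I})$ is a matroid.

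The main point is (I3), and specifically the step that divides by $\beta$. Over a local ring this is exactly where the argument breaks down, since the coefficient $\beta$ may lie in $\mathfrak{m}$ and so need not be invertible. This is why the ring analogue studied in the rest of this section gives only an independence system in general.
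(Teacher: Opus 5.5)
Your proof is correct and is the standard dimension-counting argument (essentially the proof of Proposition~1.1.1 in \cite{Oxley}). The paper does not prove this statement itself but cites \cite{Oxley} and \cite{Welsh} for it.

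One caveat concerns only your closing remark, not the proof. Non-invertibility of $\beta$ is the sole point of failure over chain rings, where Theorem~\ref{thm:monotonic_iff_chain_ring} rescues the counting step. Over a general local ring the counting step fails too, because $\mu_{R}$ need not be monotone. For example, in Example~\ref{eg:not_monotonic} the sets $I_{1}=\{3\}$ and $I_{2}=\{1,2\}$ violate (I3) even though $A(1), A(2) \in \langle A(3) \rangle = R$.
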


When $\lbrack k \rbrack \coloneqq \{ 1, \dots, k \}$ and
$V = \mathbb{F}^{[k]}$, we identify $A \in (\mathbb{F}^{[k]})^{E}$
with the $k \times \abs{E}$ matrix $A$ over $\mathbb{F}$
whose columns are indexed by $E$.
We denote by $A(S)$ the image of a subset $S \subseteq E$ under $A$,
and distinguish it from $A_{S} \coloneqq (A(i))_{i \in S}$.
Note that multiple members in $A_{S}$ are identified in $A(S)$.
Especially, we may regard $A = A_{E}$.
Then the matroid obtained from the matrix $A$ is called
the \emph{vector matroid} of $A$, and denoted by $M[A]$.
Now we extend the construction of vector matroids
to the case of matrices over a local commutative ring $R$
by using modular independence in place of linear independence.
The routine proof of the following proposition is omitted.

\begin{prop} \label{prop:indep_sys_from_modular_indep}
  Let $A \in V^{E}$ be a mapping from a finite set $E$
  to an $R$-module $V$ over a local commutative ring $R$.
  If $\mathcal{I}$ is the collection of subsets $I \subseteq E$
  such that $(A(i))_{i \in I}$ is modular independent over $R$,
  then $(E, \mathcal{I})$ is an independence system.
\end{prop}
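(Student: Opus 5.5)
We have to verify (I1) and (I2) for the collection $\mathcal{I}$ of subsets $I \subseteq E$ for which the family $(A(i))_{i \in I}$ is modular independent. We will work directly from Definition~\ref{dfn:modular_independence}, treating modular independence of a family indexed by $I$ as a condition on coefficient vectors in $R^{I}$.

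For (I1), the family indexed by $\emptyset$ has no relations to check. The only coefficient vector is the empty tuple, so the condition ``every $\alpha_i \in \mathfrak{m}$'' holds vacuously. Hence $\emptyset \in \mathcal{I}$.

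For (I2), let $I \in \mathcal{I}$ and $I' \subseteq I$, and take any relation
\[
\sum_{i \in I'} \alpha_i A(i) = \bm{0}, \qquad \alpha_i \in R .
\]
We extend it by zero: set $\alpha_i \coloneqq 0$ for $i \in I \setminus I'$. This gives a relation $\sum_{i \in I} \alpha_i A(i) = \bm{0}$ among the vectors indexed by $I$. Since $I \in \mathcal{I}$, every $\alpha_i$ with $i \in I$ lies in $\mathfrak{m}$. In particular this holds for $i \in I'$, so $I' \in \mathcal{I}$.

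In the language of Remark~\ref{rem:modular_indep_with_syzygy}, the argument says that the syzygy module of the subfamily embeds into that of the full family via extension by zero. It also says that intersecting with the coordinate subspace preserves containment in $\mathfrak{m}R^{I}$.

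The only point needing care is repeated vectors. $A$ need not be injective, so the family is indexed by $I$ and may list the same vector of $V$ more than once. We read modular independence of $A_I$ as a property of the indexed family, so two equal entries give the relation $1\cdot A(i) - 1\cdot A(j) = \bm{0}$, whose coefficients are units, and $\{i,j\}$ is then dependent. Extension by zero works regardless of repetitions, so neither step is a real obstacle. The proposition is routine, consistent with the paper omitting its proof.
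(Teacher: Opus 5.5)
Your proof is correct and is essentially the routine argument the paper omits: (I1) holds vacuously for the empty family, and (I2) follows by extending a relation on $I'$ by zero to a relation on $I$, which is the ``hereditary property of modular independence'' the paper later invokes for condition (H). Your remark on repeated columns also matches the paper's convention of treating $A_{I}$ as an indexed family rather than as the set $A(I)$.
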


When $R$ is a field, the pair $(E, \mathcal{I})$ obtained
from Proposition~\ref{prop:indep_sys_from_modular_indep} agrees with
the matroid given by Proposition~\ref{prop:vector_matroid}
since the field $\mathbb{F}$ is a local commutative ring
whose unique maximal ideal is $\mathfrak{m} = \{ 0 \}$
and an $\mathbb{F}$-module is a vector space over $\mathbb{F}$.
If $V = R^{\lbrack k \rbrack}$, we identify $A \in (R^{\lbrack k \rbrack})^{E}$
with the $k \times \abs{E}$ matrix $A$ over $R$
whose columns are indexed by $E$.
We denote the independence system obtained from such matrix $A$
by $M\lbrack A \rbrack$.
The pair $(E, \mathcal{I})$ need not be a matroid
as the following example shows.

\begin{eg} \label{eg:non_matroid}
  Set $E \coloneqq \{ 1, 2, 3 \}$.
  The ring $\mathbb{Z}_{4}$ is a local commutative ring with unique maximal ideal
  $\mathfrak{m} = 2\mathbb{Z}_{4} \eqqcolon \langle 2 \rangle$.
  \begin{equation*}
    A = \bordermatrix{
      & 1 & 2 & 3 \cr
      & 2 & 1 & 1 \cr
      & 0 & 0 & 2
    } \in (\mathbb{Z}_{4}^{2})^{E}.
  \end{equation*}
  Clearly, every single column vector in $A$ is modular independent,
  and $\{ 1, 2 \}, \{ 1, 3 \} \notin \mathcal{I}$ since
  $(2, 0)^{\top} = 2(1, 0)^{\top} = 2(1, 2)^{\top}$ over $\mathbb{Z}_{4}$.
  On the other hand, we have $\{ 2, 3 \} \in \mathcal{I}$
  because the linear relation
  \begin{equation*}
    \alpha_{1} \begin{pmatrix} 1 \\ 0 \end{pmatrix} + \alpha_{2} \begin{pmatrix} 1 \\ 2 \end{pmatrix} = \begin{pmatrix} 0 \\ 0 \end{pmatrix}
    \quad (\alpha_{1}, \alpha_{2} \in \mathbb{Z}_{4})
  \end{equation*}
  implies $(\alpha_{1}, \alpha_{2}) \in 2\mathbb{Z}_{4}^{2}$.
  Hence we have
  \begin{equation*}
    \mathcal{I} = \{ \emptyset, \{ 1 \}, \{ 2 \}, \{ 3 \}, \{ 2, 3 \} \},
  \end{equation*}
  but $(E, \mathcal{I})$ is not a matroid.
\end{eg}

We begin by characterising modular independence in terms of $\mu_{R}$ as follows.

\begin{lem} \label{lem:modular_indep_minimal_gen}
  Let $V$ be an $R$-module.
  Then $\bm{v}_{1}, \dots, \bm{v}_{\ell} \in V$ are modular independent
  if and only if
  \begin{equation*}
    \mu_{R}(\langle \bm{v}_{1}, \dots, \bm{v}_{\ell} \rangle_{R}) = \ell.
  \end{equation*}
\end{lem}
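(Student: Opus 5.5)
The plan is to compare the generating set $\{\bm{v}_{1}, \dots, \bm{v}_{\ell}\}$ with Theorem~\ref{thm:number_of_minimal_generators}. Put $W \coloneqq \langle \bm{v}_{1}, \dots, \bm{v}_{\ell} \rangle_{R}$. Since $W$ is finitely generated, $\overline{W} = W/\mathfrak{m}W$ is spanned over $\mathbb{F}$ by $\overline{\bm{v}}_{1}, \dots, \overline{\bm{v}}_{\ell}$. Hence $\mu_{R}(W) \leq \ell$, with equality exactly when these images form an $\mathbb{F}$-basis of $\overline{W}$. Because the $\ell$ images span $\overline{W}$, equality holds if and only if they are $\mathbb{F}$-linearly independent. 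By Theorem~\ref{thm:number_of_minimal_generators}(i),(ii), this is the same as saying that $\{\bm{v}_{1}, \dots, \bm{v}_{\ell}\}$ is a minimal generating set of $W$ with $\ell$ distinct elements. So both directions come down to: modular independence holds if and only if no $\bm{v}_{j}$ is redundant in generating $W$.

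For the forward direction, suppose the $\bm{v}_{i}$ are modular independent and $\mu_{R}(W) < \ell$. Then some $\overline{\bm{v}}_{j}$ lies in the $\mathbb{F}$-span of the others. Reading Theorem~\ref{thm:number_of_minimal_generators}(i) in the other direction, a subset of $\ell - 1$ of the $\bm{v}_{i}$ (those whose images contain a basis) already generates $W$. Hence $\bm{v}_{j} \in \langle \bm{v}_{i} : i \neq j \rangle$, and so $\bm{v}_{j} = \sum_{i \neq j} \beta_{i} \bm{v}_{i}$. This gives a relation in which the coefficient of $\bm{v}_{j}$ is $-1$, a unit, which contradicts modular independence. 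Equivalently, Lemma~\ref{lem:modular_dep_lin_comb} applies directly.

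For the converse, suppose $\mu_{R}(W) = \ell$ and $\sum \alpha_{i} \bm{v}_{i} = \bm{0}$ with some $\alpha_{j} \in R^{\times}$. Multiplying by $\alpha_{j}^{-1}$ expresses $\bm{v}_{j}$ as a combination of the others. Then $W$ is generated by $\ell - 1$ elements, so $\mu_{R}(W) \leq \ell - 1$ by the spanning bound above, which is a contradiction. Thus every $\alpha_{i} \in \mathfrak{m}$, i.e.\ the vectors are modular independent.

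The one delicate point is the case of repeated vectors or zero vectors. If $\bm{v}_{i} = \bm{v}_{j}$ for some $i \neq j$, the relation $\bm{v}_{i} - \bm{v}_{j} = \bm{0}$ has unit coefficients, so the family is modular dependent. At the same time the list has at most $\ell - 1$ distinct entries, so $\mu_{R}(W) < \ell$. Both sides of the equivalence fail, and the statement stays consistent. I would therefore work throughout with the indexed family rather than the set, and apply the spanning bound $\mu_{R}(W) \leq$ (number of generators), which holds for lists. That bound follows because the images of any generating list span $\overline{W}$. With this convention no step is a genuine obstacle; the argument is essentially Nakayama's lemma, packaged in Theorem~\ref{thm:number_of_minimal_generators}.
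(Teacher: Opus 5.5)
Your proposal is correct and follows essentially the same route as the paper: both directions reduce, via Theorem~\ref{thm:number_of_minimal_generators}, to the equivalence between modular independence and irredundancy of the generating list (Lemma~\ref{lem:modular_dep_lin_comb}), and the converse is argued identically. The only differences are cosmetic: the paper proves the forward direction directly by citing part~(ii) for the irredundant generating set, whereas you argue it contrapositively through the images in $W/\mathfrak{m}W$ and part~(i), and you also make explicit the harmless case of repeated entries.
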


\begin{proof}
  If $\bm{v}_{1}, \dots, \bm{v}_{\ell}$ are modular independent,
  no $\bm{v}_{i}$ is a linear combination of vectors
  in $\bm{v}_{1}, \dots, \bm{v}_{i-1}, \bm{v}_{i+1}, \dots, \bm{v}_{\ell}$.
  Thus, $\bm{v}_{1}, \dots, \bm{v}_{\ell}$ is a minimal set of
  generators of $\langle \bm{v}_{1}, \dots, \bm{v}_{\ell} \rangle_{R}$.
  By Theorem~\ref{thm:number_of_minimal_generators},
  we have $\mu_{R}(\langle \bm{v}_{1}, \dots, \bm{v}_{\ell} \rangle_{R}) = \ell$.

  Conversely, assume that
  $\mu_{R}(\langle \bm{v}_{1}, \dots, \bm{v}_{\ell} \rangle_{R}) = \ell$
  and that some $\bm{v}_{i}$ is a linear combination of
  $\bm{v}_{1}, \dots, \bm{v}_{\ell}$.
  Then $\langle \bm{v}_{1}, \dots, \bm{v}_{\ell} \rangle_{R}$ is
  generated by $\ell - 1 = \mu_{R}(\langle \bm{v}_{1}, \dots, \bm{v}_{\ell} \rangle_{R}) - 1$ elements;
  a contradiction to Theorem~\ref{thm:number_of_minimal_generators}.
\end{proof}

Thus, a minimal set of generators of a finitely generated $R$-module $V$
forms a maximal set of modular independent vectors.
However, the converse is not true in general.
That is, a maximal set of modular independent vectors in $V$
does not necessarily form a minimal set of generators of $V$.
For instance, $(1, 0), (1,2) \in \mathbb{Z}_{4}^{2}$ are
maximal modular independent vectors, but they do not generate
$\mathbb{Z}_{4}^{2}$; they generate only $\mathbb{Z}_{4} \oplus 2\mathbb{Z}_{4}$. 

The following lemma is a fundamental fact following from Nakayama's lemma
and the right-exactness of the tensor product.

\begin{lem} \label{lem:muR_supermodular}
  Let $R$ be a local commutative ring with maximal ideal $\mathfrak{m}$,
  $V$ be an $R$-module, $\mathbb{F} \coloneqq R/\mathfrak{m}$ be the residue field.
  Then, $\mu_{R}$ defined in \eqref{eq:definition_of_muR} is supermodular, that is,
  \begin{equation*}
    \mu_{R}(V_{1}) + \mu_{R}(V_{2}) \leq \mu_{R}(V_{1} + V_{2}) + \mu_{R}(V_{1} \cap V_{2})
  \end{equation*}
  for all finitely generated $R$-submodules $V_{1}, V_{2} \leq V$.
  Furthermore, the equality holds if and only if
  \begin{equation*}
    \mathfrak{m}V_{1} \cap \mathfrak{m}V_{2} = \mathfrak{m}(V_{1} \cap V_{2}).
  \end{equation*}
\end{lem}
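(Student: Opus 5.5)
The plan is to realise all four quantities $\mu_{R}(\cdot)$ as $\mathbb{F}$-dimensions in a single exact sequence of $\mathbb{F}$-vector spaces and then compare dimensions. Start from the standard short exact sequence of $R$-modules
\begin{equation*}
  0 \to V_{1} \cap V_{2} \xrightarrow{\ \iota\ } V_{1} \oplus V_{2} \xrightarrow{\ \pi\ } V_{1} + V_{2} \to 0,
  \qquad \iota(\bm{x}) = (\bm{x}, -\bm{x}), \quad \pi(\bm{x}, \bm{y}) = \bm{x} + \bm{y}.
\end{equation*}
Apply the right exact functor $- \otimes_{R} \mathbb{F}$, which sends $W$ to $W/\mathfrak{m}W$. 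This gives an exact sequence of $\mathbb{F}$-vector spaces
\begin{equation*}
  \overline{V_{1} \cap V_{2}} \xrightarrow{\ \overline{\iota}\ } \overline{V_{1}} \oplus \overline{V_{2}} \xrightarrow{\ \overline{\pi}\ } \overline{V_{1} + V_{2}} \to 0 .
\end{equation*}
Here we use $(V_{1} \oplus V_{2})/\mathfrak{m}(V_{1} \oplus V_{2}) \cong \overline{V_{1}} \oplus \overline{V_{2}}$. To avoid quoting tensor products, one can also check this exactness by hand: $\overline{\pi}$ is clearly surjective, and if $\bm{x} + \bm{y} \in \mathfrak{m}(V_{1} + V_{2}) = \mathfrak{m}V_{1} + \mathfrak{m}V_{2}$, then after subtracting elements of $\mathfrak{m}V_{1}$ and $\mathfrak{m}V_{2}$ we may assume $\bm{x} = -\bm{y} \in V_{1} \cap V_{2}$.

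Rank--nullity then gives
\begin{equation*}
  \mu_{R}(V_{1}) + \mu_{R}(V_{2}) = \mu_{R}(V_{1} + V_{2}) + \dim_{\mathbb{F}} \Im \overline{\iota} \le \mu_{R}(V_{1} + V_{2}) + \mu_{R}(V_{1} \cap V_{2}),
\end{equation*}
which is the inequality. The left-hand side and $\mu_{R}(V_{1} + V_{2})$ are finite because $V_{1}, V_{2}$ are finitely generated. If $V_{1} \cap V_{2}$ happens not to be finitely generated in the non-Noetherian case, the inequality is trivial with the right-hand side read as $\dim_{\mathbb{F}}$, possibly infinite. In every case $\dim \Im\overline{\iota}$ is finite, being at most $\mu_{R}(V_{1}) + \mu_{R}(V_{2})$.

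For the equality criterion, equality holds exactly when $\overline{\iota}$ is injective. So the remaining step is to identify $\Ker \overline{\iota}$. A class $\bm{x} + \mathfrak{m}(V_{1} \cap V_{2})$ lies in the kernel iff $\bm{x} \in \mathfrak{m}V_{1}$ and $-\bm{x} \in \mathfrak{m}V_{2}$, that is, iff $\bm{x} \in \mathfrak{m}V_{1} \cap \mathfrak{m}V_{2}$. Since always $\mathfrak{m}(V_{1} \cap V_{2}) \subseteq \mathfrak{m}V_{1} \cap \mathfrak{m}V_{2} \subseteq V_{1} \cap V_{2}$, this yields
\begin{equation*}
  \Ker \overline{\iota} = (\mathfrak{m}V_{1} \cap \mathfrak{m}V_{2}) / \mathfrak{m}(V_{1} \cap V_{2}).
\end{equation*}
Hence $\overline{\iota}$ is injective iff $\mathfrak{m}V_{1} \cap \mathfrak{m}V_{2} = \mathfrak{m}(V_{1} \cap V_{2})$.

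The only delicate point is finiteness in the equality case. If $\overline{\iota}$ is injective, then $\mu_{R}(V_{1} \cap V_{2}) = \dim \Im \overline{\iota}$ is finite, so both sides are finite and equal. Conversely, if the two sides are equal and finite, then $\dim \Ker\overline{\iota} = \mu_{R}(V_{1} \cap V_{2}) - \dim \Im\overline{\iota} = 0$. I expect this bookkeeping, together with the exactness check in the middle of the reduced sequence, to be the only steps needing care; everything else is linear algebra over $\mathbb{F}$.
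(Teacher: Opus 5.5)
Your proposal is correct and follows the paper's proof essentially step for step. You tensor $0 \to V_{1} \cap V_{2} \to V_{1} \oplus V_{2} \to V_{1} + V_{2} \to 0$ with $\mathbb{F}$, use right exactness and rank--nullity, and identify $\Ker \overline{\iota}$ with $(\mathfrak{m}V_{1} \cap \mathfrak{m}V_{2})/\mathfrak{m}(V_{1} \cap V_{2})$. The only additions are your hand-check of exactness and your care about a possibly non-finitely-generated $V_{1} \cap V_{2}$ in the non-Noetherian case; these are small refinements the paper glosses over, and they do not change the argument.
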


\begin{proof}
  Set $\iota \colon V_{1} \cap V_{2} \to V_{1} \oplus V_{2}, \, \bm{v} \mapsto (\bm{v}, -\bm{v})$
  and $\pi \colon V_{1} \oplus V_{2} \to V_{1} + V_{2}, \, (\bm{v}_{1}, \bm{v}_{2}) \mapsto \bm{v}_{1} + \bm{v}_{2}$.
  Applying $- \otimes_{R} \mathbb{F}$ to the exact sequence
  $\{ \bm{0} \} \to V_{1} \cap V_{2} \xrightarrow{\iota} V_{1} \oplus V_{2} \xrightarrow{\pi} V_{1} + V_{2} \to \{ \bm{0} \}$,
  we obtain the right-exact sequence, with the induced map $\overline{\iota}$ on the left:
  \begin{equation*}
    (V_{1} \cap V_{2})/\mathfrak{m}(V_{1} \cap V_{2})
    \xrightarrow{\overline{\iota}}
    (V_{1}/\mathfrak{m}V_{1}) \oplus (V_{2}/\mathfrak{m}V_{2})
    \xrightarrow{\overline{\pi}}
    (V_{1} + V_{2})/\mathfrak{m}(V_{1} + V_{2})
    \to
    \{ \bm{0} \}.
  \end{equation*}
  Hence,
  \begin{align*}
    \mu_{R}(V_{1}) + \mu_{R}(V_{2})
    &= \dim_{\mathbb{F}} (V_{1}/\mathfrak{m}V_{1}) + \dim_{\mathbb{F}}(V_{2}/\mathfrak{m}V_{2}) \\
    &= \dim_{\mathbb{F}} \Im(\overline{\iota}) + \dim_{\mathbb{F}}((V_{1} + V_{2})/\mathfrak{m}(V_{1} + V_{2})) \\
    &= \dim_{\mathbb{F}} ((V_{1} \cap V_{2})/\mathfrak{m}(V_{1} \cap V_{2})) - \dim_{\mathbb{F}} \Ker \overline{\iota} + \mu_{R}(V_{1} + V_{2}) \\
    &= \mu_{R}(V_{1} \cap V_{2}) + \mu_{R}(V_{1} + V_{2}) - \dim_{\mathbb{F}} \Ker \overline{\iota},
  \end{align*}
  and thus we obtain $\mu_{R}(V_{1}) + \mu_{R}(V_{2}) \leq \mu_{R}(V_{1} + V_{2}) + \mu_{R}(V_{1} \cap V_{2})$.
  Obviously, the equality holds if and only if
  $\dim_{\mathbb{F}} \Ker \overline{\iota} = 0$.
  For $\bm{v} \in V_{1} \cap V_{2}$, $\overline{\iota}(\overline{\bm{v}}) = (\overline{\bm{v}}, -\overline{\bm{v}})$
  is zero if and only if $\bm{v} \in \mathfrak{m}V_{1} \cap \mathfrak{m}V_{2}$.
  Thus,
  \begin{equation*}
    \Ker(\overline{\iota}) = \frac{(V_{1} \cap V_{2}) \cap \mathfrak{m}V_{1} \cap \mathfrak{m}V_{2}}{\mathfrak{m}(V_{1} \cap V_{2})} = \frac{\mathfrak{m}V_{1} \cap \mathfrak{m}V_{2}}{\mathfrak{m}(V_{1} \cap V_{2})}.
  \end{equation*}
  Therefore, the claim $\Ker(\overline{\iota}) = \{ \bm{0} \}$ is
  equivalent to $\mathfrak{m}V_{1} \cap \mathfrak{m}V_{2} = \mathfrak{m}(V_{1} \cap V_{2})$.
\end{proof}

We write $V_{S} \coloneqq \langle A(S) \rangle_{R}$
for all subsets $S \subseteq E$.
Then the rank function of $M \lbrack A \rbrack$ is calculated as follows.

\begin{prop} \label{prop:rank_function}
  For every $X \subseteq E$,
  \begin{equation*}
    r_{M \lbrack A \rbrack}(X) = \max_{I \subseteq X} \mu_{R}(V_{I}).
  \end{equation*}
\end{prop}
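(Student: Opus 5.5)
The plan is to compare the two sides through Lemma~\ref{lem:modular_indep_minimal_gen}, which says that a family of $\ell$ vectors is modular independent exactly when it generates a module with $\mu_{R} = \ell$. By definition of the rank function of an independence system,
\begin{equation*}
  r_{M[A]}(X) = \max\{ \abs{I} : I \subseteq X, \ (A(i))_{i \in I} \text{ modular independent} \}.
\end{equation*}
I will prove the two inequalities separately.

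For $r_{M[A]}(X) \leq \max_{I \subseteq X} \mu_{R}(V_{I})$, take an independent $I \subseteq X$ with $\abs{I} = r_{M[A]}(X)$. I need to check that Lemma~\ref{lem:modular_indep_minimal_gen} applies to the indexed family $A_{I}$, and not only to the set $A(I)$. If $A(i) = A(j)$ for some $i \neq j$ in $I$, then $A(i) - A(j) = \bm{0}$ is a relation with unit coefficients, so $A_{I}$ is dependent. Hence an independent $I$ has $\abs{A(I)} = \abs{I}$. The lemma then gives $\mu_{R}(V_{I}) = \abs{I}$, so the maximum on the right is at least $r_{M[A]}(X)$.

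For the reverse inequality, fix any $I \subseteq X$ and set $V_{I} = \langle A(I) \rangle_{R}$. Since $A(I)$ generates $V_{I}$, its image in $\overline{V_{I}} = V_{I}/\mathfrak{m}V_{I}$ spans this $\mathbb{F}$-vector space. I will therefore choose $J \subseteq I$ such that the images $\overline{A(j)}$ for $j \in J$ are pairwise distinct and form an $\mathbb{F}$-basis of $\overline{V_{I}}$. Then $\abs{J} = \mu_{R}(V_{I})$. By Theorem~\ref{thm:number_of_minimal_generators}(i), $A(J)$ is a minimal generating set of $V_{I}$, so $V_{J} = V_{I}$ and $\mu_{R}(V_{J}) = \abs{J}$. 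The vectors $A(j)$, $j \in J$, are distinct, so Lemma~\ref{lem:modular_indep_minimal_gen} shows $J$ is independent. Hence $r_{M[A]}(X) \geq \abs{J} = \mu_{R}(V_{I})$, and taking the maximum over $I \subseteq X$ gives the claim.

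The only delicate point is the bookkeeping between the indexed family $A_{I}$ and the set $A(I)$, which is why repeated columns are handled explicitly in both directions. Once that is in place, the argument is a direct application of Nakayama's lemma through Theorem~\ref{thm:number_of_minimal_generators}.
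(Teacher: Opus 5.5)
Your proof is correct and takes essentially the same route as the paper. The upper bound comes from Lemma~\ref{lem:modular_indep_minimal_gen}, and the lower bound comes from choosing a subfamily whose images form a basis of $V_{I}/\mathfrak{m}V_{I}$ and applying Theorem~\ref{thm:number_of_minimal_generators}(i). The paper does this extraction only for a maximising $I_{0}$, whereas you do it for every $I \subseteq X$; your explicit treatment of repeated columns is a harmless extra precaution.
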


\begin{proof}
  By the definition of $r_{M\lbrack A \rbrack}$ and Lemma~\ref{lem:modular_indep_minimal_gen}, we have
  \begin{align*}
    r_{M\lbrack A \rbrack}(X)
    &= \max \{ \abs{I} : I \subseteq X, \, \text{$A_{I}$ is modular independent} \} \\
    &= \max \{ \abs{I} : I \subseteq X, \, \mu_{R}(V_{I}) = \abs{I} \}.
  \end{align*}
  For any $I\subseteq X$ with $\mu_{R}(V_{I}) = \abs{I}$, we have
  $\displaystyle \abs{I} = \mu_{R}(V_{I}) \leq \max_{J\subseteq X}\mu_{R}(V_{J})$.
  Hence
  \begin{equation*}
    r_{M\lbrack A \rbrack}(X)
    \leq \max_{I \subseteq X} \mu_{R}(V_{I}).
  \end{equation*}
  Since $X$ is finite, choose $I_{0}\subseteq X$ such that
  \begin{equation*}
    \mu_{R}(V_{I_{0}})= \max_{I \subseteq X} \mu_{R}(V_{I}) \eqqcolon k.
  \end{equation*}
  We show that there exists $J \subseteq X$ such that $\mu_{R}(V_{J})=\abs{J}=k$.

  Set $\bm{a}_{i} \coloneqq A(i)$ for $i \in I_{0}$, and let $\mathbb{F}=R/\mathfrak{m}$.
  Then $\{ \overline{\bm{a}}_{i} \coloneqq \bm{a}_{i} + \mathfrak{m}V_{I_{0}} \}_{i \in I_{0}}$
  spans the $\mathbb{F}$-vector space $V_{I_{0}}/\mathfrak{m}V_{I_{0}}$.
  Since $\dim_{\mathbb{F}} (V_{I_{0}}/\mathfrak{m}V_{I_{0}}) = \mu_{R}(V_{I_{0}}) = k$,
  there exists $J \subseteq I_{0}$ such that $\{ \overline{\bm{a}}_{j} \}_{j \in J}$
  is a basis for $V_{I_{0}}/\mathfrak{m}V_{I_{0}}$ and $\abs{J} = k$.
  By Theorem~\ref{thm:number_of_minimal_generators}(i),
  $\{ \bm{a}_{j} \}_{j \in J}$ is a minimal set of generators of $V_{I_{0}}$.
  In particular, $V_J=V_{I_0}$, and hence
  \begin{equation*}
    \mu_R(V_J)=\mu_R(V_{I_0})=k=\abs{J}.
  \end{equation*}
  Therefore,
  \begin{equation*}
    r_{M\lbrack A \rbrack}(X) \ge \abs{J}=k=\max_{I \subseteq X} \mu_{R}(V_{I}).
  \end{equation*}
  Together with the opposite inequality proved above,
  the proposition follows.
\end{proof}

Note that $r_{M[A]}(X) \neq \mu_{R}(V_{X})$ in general.
Since $\mu_{R}(V_{\emptyset}) = \mu_{R}(0) = 0$,
the condition (R1) is immediate.
For $e \in E$, put $\bm{a}_{e} \coloneqq A(e)$.
For any $X \subseteq E$,
let $\{ \bm{v}_{1}, \dots, \bm{v}_{\mu_{R}(V_{X})} \}$ be
a minimal generating set of $V_{X}$.
Then $V_{X \cup \{ e \}} = V_{X} + R\bm{a}_{e}$ is generated by
$\{ \bm{v}_{1},\dots, \bm{v}_{\mu_{R}(V_{X})}, \bm{a}_{e} \}$,
hence $\mu_{R}(V_{X \cup \{ e \}}) \leq \mu_{R}(V_{X}) + 1$;
thus (R2+) holds.
Moreover, by the hereditary property of modular independence,
the condition (H) is obvious.
However, as the following example shows, even if $\mu_{R}$ is modular,
not only can submodularity (R3) fail;
even monotonicity (R2) is not assured.

\begin{eg} \label{eg:not_monotonic}
  $R \coloneqq \mathbb{F}\lbrack x, y \rbrack/\langle x^{2}, y^{2}, xy \rangle$,
  where $\mathbb{F}$ is a field, is a local commutative ring,
  but not Frobenius (for example, see \cite[Example~2.4]{Dougherty}).
  Then, $\mathfrak{m} = \langle x, y \rangle = \{ ax + by \mid a, b \in \mathbb{F} \} \cong \mathbb{F}^{2}$.
  $V \coloneqq R$ is the rank $1$ free $R$-module.
  Set $E \coloneqq \{ 1, 2, 3, 4 \}$,
  \begin{equation*}
    A = \bordermatrix{
      & 1 & 2 & 3 & 4 \cr
      & x & y & 1 + x & 1 + y \cr
    } \in V^{E},
  \end{equation*}
  and then fix $X = \{ 1, 2, 3 \}$ and $Y = \{ 1, 2, 4 \}$.
  Since $V_{X} = V_{Y} = R$ by $1 + x, 1 + y \in R^{\times}$,
  we have $V_{X} \cap V_{Y} = R$. Hence,
  \begin{equation*}
    \mathfrak{m}V_{X} \cap \mathfrak{m}V_{Y} = \mathfrak{m}R \cap \mathfrak{m}R = \mathfrak{m}R =   \mathfrak{m}(V_{X} \cap V_{Y}).
  \end{equation*}
  Thus, by Lemma~\ref{lem:muR_supermodular},
  $\mu_{R}$ behaves modularly for this pair.

  On the other hand, $X \cap Y = \{ 1, 2 \}$ implies
  $V_{X \cap Y} = \langle x, y \rangle = \mathfrak{m}$
  and so $V_{X \cap Y} \leq V_{X} \cap V_{Y}$.
  Then, while $X \cap Y \subseteq X, Y$,
  the function $\mu_{R}(V_{-})$ is not monotonic because 
  \begin{equation*}
    \mu_{R}(V_{X \cap Y}) = 2
    > 1 = \mu_{R}(V_{X}) = \mu_{R}(V_{Y}).
  \end{equation*}
  Furthermore, $\mu_{R}(V_{-})$ is not even submodular because
  \begin{equation*}
    \mu_{R}(V_{X}) + \mu_{R}(V_{Y}) = 1 + 1 \leq 1 + 2 = \mu_{R}(V_{X \cup Y}) + \mu_{R}(V_{X \cap Y}).
  \end{equation*}
\end{eg}

However, by Lemma~\ref{lem:modular_indep_minimal_gen}, 
we know that for every $X \subseteq E$,
$A_{X}$ is modular independent if and only if
$r_{M\lbrack A \rbrack}(X) = \abs{X}$.
Hence, as soon as $\mu_{R}(V_{-})$ satisfies monotonicity (R2),
we have $r_{M[A]} = \mu_{R}(V_{-})$; in other words,
the rank function of the independence system coincides with $\mu_{R}(V_{-})$.

The following theorem naturally leads us to the discussion of chain rings.
Note that the theorem does not assume finiteness of $R$.

\begin{thm} \label{thm:monotonic_iff_chain_ring}
  Let $(R, \mathfrak{m})$ be a local commutative ring
  with $\bigcap_{\ell \geq 1} \mathfrak{m}^{\ell} = \{ 0 \}$.
  Then the following are equivalent:
  \begin{enumerate}[label=\textup{(\arabic*)}, itemsep=0ex]
    \item $R$ is a chain ring.
    \item For all finitely generated $R$-modules $V$, $V^{\prime}$
      with $V^{\prime} \leq V$, $\mu_{R}(V^{\prime}) \leq \mu_{R}(V)$.
    \item For all finitely generated ideals $\mathfrak{a}$, $\mathfrak{a}^{\prime}$ of $R$
      with $\mathfrak{a}^{\prime} \subseteq \mathfrak{a}$,
      $\mu_{R}(\mathfrak{a}^{\prime}) \leq \mu_{R}(\mathfrak{a})$.
  \end{enumerate}
\end{thm}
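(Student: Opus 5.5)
I will prove the cycle (1) $\Rightarrow$ (2) $\Rightarrow$ (3) $\Rightarrow$ (1). The implication (2) $\Rightarrow$ (3) is immediate, since finitely generated ideals are finitely generated $R$-submodules of $R$, and $\mathfrak{a}^{\prime} \subseteq \mathfrak{a}$ means $\mathfrak{a}^{\prime} \leq \mathfrak{a}$.

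For (3) $\Rightarrow$ (1), I will use Lemma~\ref{lem:noetherian_chain_iff_local_PIR}, which holds under the standing hypothesis $\bigcap_{\ell} \mathfrak{m}^{\ell} = \{0\}$. By that lemma it suffices to show that $\mathfrak{m}$ is principal. The difficulty is that $\mathfrak{m}$ itself need not be finitely generated a priori, so (3) cannot be applied to it directly. I will argue by contradiction. Suppose $\mathfrak{m}$ is not principal, and take $x \in \mathfrak{m}\setminus\{0\}$; if $\mathfrak{m}=0$, then $R$ is a field and hence trivially a chain ring.
\begin{itemize}
\item First I claim $\mathfrak{m} \neq \mathfrak{m}^{2} + Rx$ for a suitable choice of $x$. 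Choose $x \in \mathfrak{m}\setminus\mathfrak{m}^{2}$; this is possible because $\mathfrak{m}=\mathfrak{m}^2$ would give $\mathfrak{m}=\bigcap\mathfrak{m}^\ell=0$. If $\mathfrak{m}=Rx+\mathfrak{m}^2$, I still want a second generator. Here I will instead look for $y \in \mathfrak{m}\setminus(Rx+\mathfrak{m}^{2})$.
\item If such a $y$ exists, then $\mathfrak{a}=\langle x,y\rangle$ has $\mu_R(\mathfrak{a})=2$, because $x$ and $y$ are independent modulo $\mathfrak{m}\mathfrak{a}\subseteq\mathfrak{m}^2$. I then want a finitely generated subideal with $\mu \ge 3$. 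The natural candidate is $\mathfrak{a}^{\prime}=\mathfrak{m}\mathfrak{a}=\langle x^2,xy,y^2\rangle$, or better $\mathfrak{a}^{\prime}=\mathfrak{m}^{n}$-type monomials $\langle x^n,\dots,y^n\rangle$, which have $n+1$ generators. Showing that these stay minimal generators requires controlling relations, and that is the obstacle.
\end{itemize}

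A cleaner route is to exploit the chain condition directly. Suppose $R$ is not a chain ring, so there exist $a,b$ with $a\notin Rb$ and $b\notin Ra$. Take $\mathfrak{a}=\langle a,b\rangle$; by Corollary~\ref{cor:two_vectors_mod_dep} (with $V=R$) we get $\mu_R(\mathfrak{a})=2$. Write $a\in\mathfrak{m}^s\setminus\mathfrak{m}^{s+1}$ and $b\in\mathfrak{m}^t\setminus\mathfrak{m}^{t+1}$ using the Krull-intersection hypothesis. I then seek a subideal with three minimal generators, such as $\langle a^2,ab,b^2\rangle$, or a principal ideal $Rc$ containing $a$ and $b$ with $\mu=1$ and $\mathfrak{a}\subseteq Rc$, which would contradict (3) immediately.

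The first thing to try is therefore the following. If $\mathfrak{m}/\mathfrak{m}^2$ has dimension $\ge 2$, pick $x,y$ independent there and compare $\mathfrak{a}'=\langle x,y\rangle$ with a principal ideal. That comparison fails, since principal ideals do not contain such $\mathfrak{a}'$. So instead I compare $\langle x,y\rangle\subseteq\mathfrak{a}=\langle x,y,\ldots\rangle$ and show the monomials $x^iy^{n-i}$ are independent modulo $\mathfrak{m}^{n+1}$ for some $n$. If that fails, I will handle the case $\dim\mathfrak{m}/\mathfrak{m}^2\le1$ by a Nakayama-type argument: with $\mathfrak{m}=Rx+\mathfrak{m}^2$, one gets $\mathfrak{m}=Rx+\mathfrak{m}^\ell$ for all $\ell$, hence $\mathfrak{m}=Rx$ via the intersection hypothesis. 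This last step also needs care and is the other likely obstacle.

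For (1) $\Rightarrow$ (2), by Lemma~\ref{lem:noetherian_chain_iff_local_PIR} $R$ is a PIR, so by Lemma~\ref{lem:decomposition_PIR} $\mu_R(V)$ equals the number of invariant factors of $V$. Every ideal of $R$ is $\mathfrak{m}^j$ or $0$, and $\mathfrak{m}=R\theta$. Then $\mu_R(V)=\dim_{\mathbb F} V[\theta]$, the socle-type dimension, since each cyclic factor $R/\mathfrak{a}_i$ contributes exactly one dimension to the elements killed by $\theta$. This holds because $\mathfrak{a}_i=\mathfrak{m}^{s}$ or $0$; in the case of a free factor $R$, I need $R[\theta]\ne 0$ to be one-dimensional. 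If $R$ is a domain (a DVR), I use torsion-free rank plus torsion count instead. Since $V'[\theta]\subseteq V[\theta]$, monotonicity follows. The free-summand and DVR cases need separate handling, but they are routine.
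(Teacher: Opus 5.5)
Your direction (3) $\Rightarrow$ (1) is not proved. Every route you sketch ends at an obstacle you acknowledge, and the decisive comparison is one you explicitly rule out. You write that comparing $\langle x,y\rangle$ with a principal ideal fails ``since principal ideals do not contain such $\mathfrak{a}'$''. But $R = R\cdot 1$ is a principal, finitely generated ideal containing every ideal, and $\mu_{R}(R) = \dim_{\mathbb{F}} R/\mathfrak{m} = 1$. Taking $\mathfrak{a} = R$ in (3) gives $\mu_{R}(\mathfrak{a}') \le 1$ for every finitely generated ideal $\mathfrak{a}'$. This is exactly the ``principal ideal $Rc$ containing $a$ and $b$'' you were looking for, with $c = 1$.

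With it, your second route finishes in one line. If $R$ is not a chain ring, two incomparable ideals yield $a \notin Rb$ and $b \notin Ra$. Then $\mu_{R}(\langle a,b\rangle) = 2 > 1 = \mu_{R}(R)$ by Corollary~\ref{cor:two_vectors_mod_dep} and Lemma~\ref{lem:modular_indep_minimal_gen}, contradicting (3). This argument does not even use $\bigcap_{\ell}\mathfrak{m}^{\ell} = \{0\}$. The paper uses the same observation differently: it picks $x \in \mathfrak{m}\setminus\mathfrak{m}^{2}$, writes $\langle x,y\rangle = \langle d\rangle$ and $x = rd$, and notes that $r \in \mathfrak{m}$ would force $x \in \mathfrak{m}^{2}$. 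Hence $y \in \langle x\rangle$, so $\mathfrak{m} = \langle x\rangle$, and it concludes by Lemma~\ref{lem:noetherian_chain_iff_local_PIR}.

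The alternatives you propose would not work.
\begin{itemize}
\item Searching for a subideal with at least three minimal generators is hopeless in general. Take $R = \mathbb{F}[x,y]/\langle x^{2},xy,y^{2}\rangle$, which is local with $\mathfrak{m}^{2} = 0$ and is not a chain ring. Every proper ideal is an $\mathbb{F}$-subspace of $\mathfrak{m}$, so $\mu_{R} \le 2$ on all ideals, and all monomials of degree at least $2$ vanish. The only violation of (3) there is $\mathfrak{m} \subseteq R$.
\item From $\mathfrak{m} = Rx + \mathfrak{m}^{\ell}$ for all $\ell$, you only learn that $\mathfrak{m}$ is the $\mathfrak{m}$-adic closure of $Rx$. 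Concluding $\mathfrak{m} = Rx$ requires $Rx$ to be closed, i.e.\ a Krull intersection statement for $R/Rx$. That does not follow from $\bigcap_{\ell}\mathfrak{m}^{\ell} = \{0\}$ without Noetherianity, which you do not have at that stage.
\end{itemize}

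By contrast, your (1) $\Rightarrow$ (2) is sound and takes a different route from the paper's. The paper inducts on the number of invariant factors, projecting $V'$ onto the last cyclic summand. You instead use the invariant $\dim_{\mathbb{F}} V[\theta]$ (plus the free rank in the domain case) together with $V'[\theta] \subseteq V[\theta]$.

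To complete it, state the dichotomy explicitly. Every nonzero element has the form $u\theta^{t}$ with $u$ a unit. Either $\theta$ is nilpotent, in which case $R$ is Artinian and $\Ann_{R}(\theta) = \mathfrak{m}^{\nu-1}$ is one-dimensional, which covers free summands. Or $R$ is a domain, hence a DVR, where rank over the fraction field is monotone by exactness of localisation. The paper's induction avoids this case split; your version trades it for a cleaner invariant.
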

\begin{proof}
  (1) $\Rightarrow$ (2):
  By Lemma~\ref{lem:noetherian_chain_iff_local_PIR}, $R$ is PIR.
  By Lemma~\ref{lem:decomposition_PIR}, every finitely generated $R$-module $V$
  admits an invariant factor decomposition $V \cong \bigoplus_{i = 1}^{k} R/\mathfrak{a}_{i}$,
  where $k = \mu_{R}(V)$. Note that each factor $R/\mathfrak{a}_{i}$
  is cyclic because $\langle 1 + \mathfrak{a}_{i} \rangle = R/\mathfrak{a}_{i}$.
  We prove by induction on $k$ that every finitely generated submodule
  $V^{\prime} \leq V$ satisfies $\mu_{R}(V^{\prime}) \leq k$.
  
  If $k = 1$, then $V$ is cyclic, so every submodule $V^{\prime}$ is cyclic and $\mu_{R}(V^{\prime}) \leq 1$.
  Assume $k \geq 2$ and write $V = U_{k} \oplus R/\mathfrak{a}_{k}$ with $U_{k} = \bigoplus_{i = 1}^{k - 1} R/\mathfrak{a}_{i}$.
  Fix an isomorphism $\varphi \colon V \xrightarrow{\sim} \bigoplus_{i = 1}^{k} R/\mathfrak{a}_{i}$,
  and take the projection
  $\pr_{k} \colon \bigoplus_{i = 1}^{k} R/\mathfrak{a}_{i} \to R/\mathfrak{a}_{k},
  \, (a_{1}, \dots, a_{k}) \mapsto a_{k}$.
  Let $\pi_{k} \colon V \to R/\mathfrak{a}_{k}$ be the composition $\pr_{k} \circ \varphi$ and
  set $W \coloneqq \pi_{k}(V^{\prime}) \leq R/\mathfrak{a}_{k}$.
  Since $R/\mathfrak{a}_{k}$ is cyclic and $R$ is a PIR, $W$ is cyclic;
  choose $\bm{v}_{k} \in V^{\prime}$ with $\pi_{k}(\bm{v}_{k})$ generating $W$.
  Let $V^{\prime\prime} \coloneqq V^{\prime} \cap \Ker \pi_{k} \leq U_{k}$.
  By induction, $\mu_{R}(V^{\prime\prime}) \leq k - 1$.
  For any $\bm{v} \in V^{\prime}$, there exists some $r \in R$
  such that $\pi_{k}(\bm{v}) = r\pi_{k}(\bm{v}_{k})$.
  Since $\pi_{k}(\bm{v} - r\bm{v}_{k}) = 0$ implies $\bm{v} - r\bm{v}_{k} \in V^{\prime} \cap \Ker \pi_{k} = V^{\prime\prime}$,
  we obtain $V^{\prime} = V^{\prime\prime} + \langle \bm{v}_{k} \rangle$.
  Therefore, $\mu_{R}(V^{\prime}) \leq (k - 1) + 1 = k = \mu_{R}(V)$.
  
  (2) $\Rightarrow$ (3) is trivial because ideals are $R$-submodules.

  (3) $\Rightarrow$ (1):
  Assumption (3) applied to a finitely generated ideal $\mathfrak{a} \subseteq R$ gives $\mu_{R}(\mathfrak{a}) \leq \mu_{R}(R) = 1$.
  If $R$ is a field, the claim is obvious, so we may assume $\mathfrak{m} \neq \{ 0 \}$.
  Then, since $\bigcap_{\ell \geq 1} \mathfrak{m}^{\ell} = 0$, we have $\mathfrak{m} \neq \mathfrak{m}^{2}$.
  Take an arbitrary $x \in \mathfrak{m} \setminus \mathfrak{m}^{2}$.
  For any $y \in \mathfrak{m}$, the ideal $\langle x, y \rangle$ is finitely generated,
  and so principal by the assumption.
  Thus, there exists some $d \in R$ with $\langle x, y \rangle = \langle d \rangle$,
  and then $x = rd$ for some $r \in R$.
  If $r \in \mathfrak{m}$, then $x \in \mathfrak{m} \langle d \rangle \subseteq \mathfrak{m}^{2}$; a contradiction.
  Consequently, $r$ is a unit, and so $\langle d \rangle = \langle x \rangle$, which implies $y \in \langle x \rangle$.
  Therefore, the maximal ideal $\mathfrak{m} = \langle x \rangle$ is principal,
  and hence $R$ is a chain ring by Lemma~\ref{lem:noetherian_chain_iff_local_PIR}.
\end{proof}

\begin{cor} \label{cor:rank_when_chain}
  Let $V$ be an $R$-module and $A \colon E \to V$.
  If $R$ is a commutative chain ring with $\bigcap_{\ell \geq 1} \mathfrak{m}^{\ell} = \{ 0 \}$,
  then the rank function of the independence system $M \lbrack A \rbrack$ is
  $r_{M \lbrack A \rbrack} = \mu_{R}(V_{-})$.
\end{cor}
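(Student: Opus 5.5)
The corollary should follow by combining Proposition~\ref{prop:rank_function} with the implication (1) $\Rightarrow$ (2) of Theorem~\ref{thm:monotonic_iff_chain_ring}. The plan is to prove the two inequalities $r_{M[A]}(X) \leq \mu_{R}(V_{X})$ and $r_{M[A]}(X) \geq \mu_{R}(V_{X})$ for a fixed $X \subseteq E$.

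By Proposition~\ref{prop:rank_function}, $r_{M[A]}(X) = \max_{I \subseteq X} \mu_{R}(V_{I})$. Taking $I = X$ in this maximum immediately gives $r_{M[A]}(X) \geq \mu_{R}(V_{X})$.

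For the reverse inequality, take any $I \subseteq X$. Then $V_{I} = \langle A(I) \rangle_{R}$ is a submodule of $V_{X} = \langle A(X) \rangle_{R}$. Both modules are finitely generated, since $E$ is finite.

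The hypotheses that $R$ is a chain ring and $\bigcap_{\ell} \mathfrak{m}^{\ell} = \{0\}$ are exactly those of Theorem~\ref{thm:monotonic_iff_chain_ring}. Its implication (1) $\Rightarrow$ (2) therefore gives $\mu_{R}(V_{I}) \leq \mu_{R}(V_{X})$. Taking the maximum over all $I \subseteq X$ yields $r_{M[A]}(X) \leq \mu_{R}(V_{X})$, which completes the proof.

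The argument has no real obstacle. The only point to check is that (2) of Theorem~\ref{thm:monotonic_iff_chain_ring} applies to the submodule $V_{I} \leq V_{X}$ inside an arbitrary ambient module $V$. It does, because condition (2) concerns finitely generated modules $V^{\prime} \leq V$, and here we apply it with $V_{X}$ in the role of the larger module and $V_{I}$ in the role of $V^{\prime}$.
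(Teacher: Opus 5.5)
Your proposal is correct and follows the same approach as the paper. The paper also combines Proposition~\ref{prop:rank_function} with the monotonicity of $\mu_{R}$ from Theorem~\ref{thm:monotonic_iff_chain_ring} to get $\max_{I \subseteq X} \mu_{R}(V_{I}) = \mu_{R}(V_{X})$; you have simply written out the two inequalities explicitly.
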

\begin{proof}
  By Theorem~\ref{thm:monotonic_iff_chain_ring}, $\displaystyle \max_{I \subseteq X} \mu_{R}(V_{I}) = \mu_{R}(V_{X})$.
  The corollary follows from Proposition~\ref{prop:rank_function}.
\end{proof}

In the remaining part of this section, we assume that
$R$ is a commutative chain ring with $\bigcap_{\ell \geq 1} \mathfrak{m}^{\ell} = \{ 0 \}$.
Thus it remains to analyse the submodularity of $r_{M\lbrack A \rbrack}$.

\begin{thm} \label{thm:submodular_iff}
  $r_{M\lbrack A \rbrack}$ is submodular if and only if
  \begin{equation*}
    (\mu_{R}(V_{X \cap Y}) =) \dim_{\mathbb{F}} \frac{V_{X \cap Y}}{\mathfrak{m} V_{X \cap Y}} \leq \dim_{\mathbb{F}} \frac{V_{X} \cap V_{Y}}{\mathfrak{m}V_{X} \cap \mathfrak{m}V_{Y}} \quad \text{for all } X, Y \subseteq E.
  \end{equation*}
\end{thm}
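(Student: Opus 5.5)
The plan is to rewrite both sides of submodularity in terms of the exact-sequence computation from the proof of Lemma~\ref{lem:muR_supermodular}, and then compare them pair by pair $(X, Y)$. Since $R$ is a chain ring with $\bigcap_{\ell} \mathfrak{m}^{\ell} = \{0\}$, Corollary~\ref{cor:rank_when_chain} gives $r_{M[A]} = \mu_{R}(V_{-})$. Also $V_{X \cup Y} = V_{X} + V_{Y}$, because $V_{S}$ is generated by $A(S)$. So (R3) for the pair $(X,Y)$ reads
\begin{equation*}
  \mu_{R}(V_{X} + V_{Y}) + \mu_{R}(V_{X \cap Y}) \leq \mu_{R}(V_{X}) + \mu_{R}(V_{Y}).
\end{equation*}

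For the right-hand side I would apply the identity established inside the proof of Lemma~\ref{lem:muR_supermodular} with $V_{1} = V_{X}$ and $V_{2} = V_{Y}$:
\begin{equation*}
  \mu_{R}(V_{X}) + \mu_{R}(V_{Y}) = \mu_{R}(V_{X} \cap V_{Y}) + \mu_{R}(V_{X} + V_{Y}) - \dim_{\mathbb{F}} \frac{\mathfrak{m}V_{X} \cap \mathfrak{m}V_{Y}}{\mathfrak{m}(V_{X} \cap V_{Y})}.
\end{equation*}
For this to make sense, $V_{X} \cap V_{Y}$ must be finitely generated, so that all the dimensions are finite. That holds because $R$ is Noetherian by Lemma~\ref{lem:noetherian_chain_iff_local_PIR}, and $V_{X} \cap V_{Y}$ is a submodule of the finitely generated module $V_{X}$.

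Next consider the chain $\mathfrak{m}(V_{X} \cap V_{Y}) \subseteq \mathfrak{m}V_{X} \cap \mathfrak{m}V_{Y} \subseteq V_{X} \cap V_{Y}$. Every successive quotient is killed by $\mathfrak{m}$, so each is an $\mathbb{F}$-vector space. Additivity of dimension along this chain gives
\begin{equation*}
  \mu_{R}(V_{X} \cap V_{Y}) - \dim_{\mathbb{F}} \frac{\mathfrak{m}V_{X} \cap \mathfrak{m}V_{Y}}{\mathfrak{m}(V_{X} \cap V_{Y})} = \dim_{\mathbb{F}} \frac{V_{X} \cap V_{Y}}{\mathfrak{m}V_{X} \cap \mathfrak{m}V_{Y}}.
\end{equation*}
Substituting this into the identity above and cancelling $\mu_{R}(V_{X} + V_{Y})$, submodularity for the pair $(X,Y)$ becomes exactly
\begin{equation*}
  \mu_{R}(V_{X \cap Y}) \leq \dim_{\mathbb{F}} \frac{V_{X} \cap V_{Y}}{\mathfrak{m}V_{X} \cap \mathfrak{m}V_{Y}}.
\end{equation*}

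This equivalence holds separately for each pair $(X,Y)$. Quantifying over all $X, Y \subseteq E$ therefore proves both directions of the theorem at once.

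The only step needing care is the middle one, namely checking the inclusions in the chain and that each quotient really is an $\mathbb{F}$-vector space of finite dimension. Everything else is a direct rearrangement of Lemma~\ref{lem:muR_supermodular}.
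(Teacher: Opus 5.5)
Your proposal is correct and follows the paper's proof essentially step for step. Both arguments reduce to $r_{M[A]} = \mu_{R}(V_{-})$ via Corollary~\ref{cor:rank_when_chain}, reuse the exact-sequence identity from Lemma~\ref{lem:muR_supermodular}, and identify $\mu_{R}(V_{X}\cap V_{Y}) - \dim_{\mathbb{F}}\Ker\overline{\iota}$ with $\dim_{\mathbb{F}}(V_{X}\cap V_{Y})/(\mathfrak{m}V_{X}\cap\mathfrak{m}V_{Y})$ along the chain $\mathfrak{m}(V_{X}\cap V_{Y})\subseteq\mathfrak{m}V_{X}\cap\mathfrak{m}V_{Y}\subseteq V_{X}\cap V_{Y}$; the paper phrases that last step as a short exact sequence. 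Your remark that $V_{X}\cap V_{Y}$ is finitely generated because $R$ is Noetherian fills in a detail the paper leaves implicit.
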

\begin{proof}
  As in the proof of Lemma~\ref{lem:muR_supermodular}, we have
  \begin{equation*}
    \mu_{R}(V_{X}) + \mu_{R}(V_{Y}) - \mu_{R}(V_{X} + V_{Y}) = \dim \Im \overline{\iota}.
  \end{equation*}
  Noting that $r_{M\lbrack A \rbrack}(X) = \mu_{R}(V_{X})$ by Corollary~\ref{cor:rank_when_chain}
  and that $V_{X \cup Y} = V_{X} + V_{Y}$, the function $r_{M\lbrack A \rbrack}$ is submodular if and only if
  \begin{equation} \label{eq:dimIm_geq_mu}
    \dim_{\mathbb{F}} \Im \overline{\iota} \geq r_{M[A]}(X \cap Y) = \mu_{R}(V_{X \cap Y}).
  \end{equation}
  Now we calculate $\dim_{\mathbb{F}} \Im \overline{\iota}$. Since
  \begin{equation*}
    \mathfrak{m}(V_{X} \cap V_{Y}) \subseteq \mathfrak{m}V_{X} \cap \mathfrak{m} V_{Y} \subseteq V_{X} \cap V_{Y},
  \end{equation*}
  by the third isomorphism theorem, we have
  \begin{equation*}
    \Ker \left( \frac{V_{X} \cap V_{Y}}{\mathfrak{m}(V_{X} \cap V_{Y})} \to \frac{V_{X} \cap V_{Y}}{\mathfrak{m}V_{X} \cap \mathfrak{m}V_{Y}} \right) = \frac{\mathfrak{m}V_{X} \cap \mathfrak{m}V_{Y}}{\mathfrak{m}(V_{X} \cap V_{Y})}.
  \end{equation*}
  Thus, the short sequence 
  \begin{equation*}
    0 \to \frac{\mathfrak{m}V_{X} \cap \mathfrak{m}V_{Y}}{\mathfrak{m}(V_{X} \cap V_{Y})} \hookrightarrow \frac{V_{X} \cap V_{Y}}{\mathfrak{m}(V_{X} \cap V_{Y})} \twoheadrightarrow \frac{V_{X} \cap V_{Y}}{\mathfrak{m}V_{X} \cap \mathfrak{m}V_{Y}} \to 0
  \end{equation*}
  is exact. Therefore, we obtain
  \begin{align*}
    \dim_{\mathbb{F}} \Im \overline{\iota} &= \mu_{R}(V_{X} \cap V_{Y}) - \dim_{\mathbb{F}} \Ker \overline{\iota} \\
    &= \dim_{\mathbb{F}} \frac{V_{X} \cap V_{Y}}{\mathfrak{m}(V_{X} \cap V_{Y})} - \dim_{\mathbb{F}} \frac{\mathfrak{m}V_{X} \cap \mathfrak{m}V_{Y}}{\mathfrak{m}(V_{X} \cap V_{Y})} = \dim_{\mathbb{F}} \frac{V_{X} \cap V_{Y}}{\mathfrak{m} V_{X} \cap \mathfrak{m}V_{Y}},
  \end{align*}
  as required.
\end{proof}

\begin{cor} \label{cor:sufficient_condition_for_submodularity}
  $r_{M \lbrack A \rbrack}$ is submodular if
  \begin{equation*}
    V_{X \cap Y} \cap \mathfrak{m}V_{X} \cap \mathfrak{m}V_{Y} = \mathfrak{m}V_{X \cap Y} \quad \text{for all} \quad X, Y \subseteq E.
  \end{equation*}
\end{cor}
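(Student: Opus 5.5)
Our plan is to deduce the corollary directly from Theorem~\ref{thm:submodular_iff}. Fix $X, Y \subseteq E$. Since $X \cap Y \subseteq X$ and $X \cap Y \subseteq Y$, we have $V_{X \cap Y} \leq V_{X} \cap V_{Y}$. This inclusion gives a natural $\mathbb{F}$-linear map
\begin{equation*}
  \phi \colon \frac{V_{X \cap Y}}{\mathfrak{m}V_{X \cap Y}} \longrightarrow \frac{V_{X} \cap V_{Y}}{\mathfrak{m}V_{X} \cap \mathfrak{m}V_{Y}}, \qquad \bm{v} + \mathfrak{m}V_{X \cap Y} \mapsto \bm{v} + (\mathfrak{m}V_{X} \cap \mathfrak{m}V_{Y}).
\end{equation*}
We check that $\phi$ is well defined. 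Because $V_{X \cap Y} \leq V_{X}$ and $V_{X \cap Y} \leq V_{Y}$, we get $\mathfrak{m}V_{X \cap Y} \subseteq \mathfrak{m}V_{X} \cap \mathfrak{m}V_{Y}$. Also, both quotients are annihilated by $\mathfrak{m}$, so they are $\mathbb{F}$-vector spaces and $\phi$ is $\mathbb{F}$-linear.

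The key step is to show that $\phi$ is injective under the hypothesis. Its kernel is
\begin{equation*}
  \Ker \phi = \frac{V_{X \cap Y} \cap \mathfrak{m}V_{X} \cap \mathfrak{m}V_{Y}}{\mathfrak{m}V_{X \cap Y}},
\end{equation*}
and the assumed equality $V_{X \cap Y} \cap \mathfrak{m}V_{X} \cap \mathfrak{m}V_{Y} = \mathfrak{m}V_{X \cap Y}$ says precisely that this quotient is zero. An injective linear map between $\mathbb{F}$-vector spaces gives
\begin{equation*}
  \mu_{R}(V_{X \cap Y}) = \dim_{\mathbb{F}} \frac{V_{X \cap Y}}{\mathfrak{m}V_{X \cap Y}} \leq \dim_{\mathbb{F}} \frac{V_{X} \cap V_{Y}}{\mathfrak{m}V_{X} \cap \mathfrak{m}V_{Y}}.
\end{equation*}
This is the inequality required in Theorem~\ref{thm:submodular_iff}. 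It holds for every pair $X, Y$, so the theorem yields submodularity of $r_{M[A]}$.

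The only point needing care is finiteness of the dimensions. The left-hand side is finite because $V_{X \cap Y}$ is finitely generated. If the right-hand side were infinite, the inequality would hold trivially, and in fact $V_{X} \cap V_{Y}$ is finitely generated here, since $R$ is Noetherian by Lemma~\ref{lem:noetherian_chain_iff_local_PIR}. We expect no real obstacle: the whole argument is the identification of $\Ker \phi$, which is a direct application of the correspondence theorem for submodules.
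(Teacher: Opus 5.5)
Your proposal is correct and matches the paper's proof: the paper also takes the canonical map induced by $V_{X \cap Y} \hookrightarrow V_{X} \cap V_{Y}$, identifies its kernel as $(V_{X \cap Y} \cap \mathfrak{m}V_{X} \cap \mathfrak{m}V_{Y})/\mathfrak{m}V_{X \cap Y}$, and concludes from injectivity via Theorem~\ref{thm:submodular_iff}. Your extra checks of well-definedness and of finiteness of the dimensions are details the paper leaves implicit.
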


\begin{proof}
  Fix $X, Y \subseteq E$ . By Theorem~\ref{thm:submodular_iff},
  it is enough to take an injection from
  $V_{X \cap Y}/{\mathfrak{m}V_{X \cap Y}}$ into
  $(V_{X} \cap V_{Y})/(\mathfrak{m}V_{X} \cap \mathfrak{m}V_{Y})$.
  Consider the canonical homomorphism
  \begin{equation*}
    \eta \colon \frac{V_{X \cap Y}}{\mathfrak{m}V_{X \cap Y}} \to \frac{V_{X} \cap V_{Y}}{\mathfrak{m}V_{X} \cap \mathfrak{m}V_{Y}}
  \end{equation*}
  induced by the inclusion $V_{X \cap Y} \hookrightarrow V_{X} \cap V_{Y}$.
  Its kernel is 
  \begin{equation*}
    \Ker \eta \cong \frac{V_{X \cap Y} \cap (\mathfrak{m}{V}_{X} \cap \mathfrak{m}V_{Y})}{\mathfrak{m}V_{X \cap Y}} = \frac{\mathfrak{m}V_{X \cap Y}}{\mathfrak{m}V_{X \cap Y}} = \{ \bm{0} \}
  \end{equation*}
  by the assumption, and so $\eta$ is an injection. Therefore, $r_{M \lbrack A \rbrack}$ is submodular.
\end{proof}
 
\section{Matroid Representation by Codes over Finite Rings} \label{sect:matroid_repr_codes}

In this section we study the independence systems arising from
linear codes $C \leq R^{E}$ via modular independence of the columns
of a generator matrix.
As shown in Section~\ref{sect:matroid_repr_by_modular_indep},
the matroidal behaviour of this construction is most transparent
over chain rings: in particular, chain rings are exactly
the local rings for which $\mu_{R}$ is monotone on submodules,
and in that setting the resulting rank function is governed by $\mu_{R}$.
Moreover, following coding-theoretic convention,
we assume throughout that $R$ is finite, so that codes and their duals
admit generator and parity-check matrices.
Accordingly, for the remainder of this section we work over
a finite commutative chain ring $(R,\mathfrak m)$;
we keep the notation $\mathfrak{m} = \langle\theta\rangle$
for the maximal ideal, write $R/\mathfrak{m} = \mathbb{F}_{q}$
for the residue field, and denote by $\nu$ the nilpotency index
of $\mathfrak{m}$.
For a finite set $J$, setting $V \coloneqq R^{J}$, we may regard
$A \in V^{E} \cong R^{J \times E}$ as a $\abs{J} \times \abs{E}$ matrix over $R$.
Accordingly, we need to establish that the independence system
associated with a code $C$ is independent of the choice of generator matrices for $C$.
For this, it is enough to extend \cite[Lemma~3.4]{DoughertyLiu09}
to a general local ring $R$.
Note that the following Lemma~\ref{lem:modular_independence_is_invariant_under_isomorphism}
and Proposition~\ref{prop:modular_independence_is_invariant_under_elementary_row_operations}
hold over any commutative local ring (no finiteness or chain condition is needed).

\begin{lem} \label{lem:modular_independence_is_invariant_under_isomorphism}
  Let $V$ be an $R$-module and $\varphi \colon V \to V$ be an $R$-isomorphism.
  Then, $\bm{v}_{1}, \dots, \bm{v}_{\ell} \in V$ are modular independent
  if and only if $\varphi(\bm{v}_{1}), \dots, \varphi(\bm{v}_{\ell}) \in V$
  are modular independent.
\end{lem}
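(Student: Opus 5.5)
The plan is to work directly from Definition~\ref{dfn:modular_independence} and to compare the sets of coefficient tuples that give zero linear combinations before and after applying $\varphi$. First I would show that these relation sets coincide. Since $\varphi$ is $R$-linear,
\begin{equation*}
  \varphi\Bigl(\sum_{i=1}^{\ell} \alpha_{i}\bm{v}_{i}\Bigr) = \sum_{i=1}^{\ell} \alpha_{i}\varphi(\bm{v}_{i}).
\end{equation*}
Because $\varphi$ is injective, the left-hand side is $\bm{0}$ if and only if $\sum_{i} \alpha_{i}\bm{v}_{i} = \bm{0}$. In the language of Remark~\ref{rem:modular_indep_with_syzygy}, this says
\begin{equation*}
  \Ker \varphi_{\varphi(\bm{v}_{1}), \dots, \varphi(\bm{v}_{\ell})} = \Ker \varphi_{\bm{v}_{1}, \dots, \bm{v}_{\ell}} \subseteq R^{\ell}.
\end{equation*}

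Modular independence of a tuple is exactly the condition that this kernel lies in $\mathfrak{m}R^{\ell}$. Since the two kernels are equal, one tuple satisfies the condition if and only if the other does, which is the claim. For the converse direction one can either use the equality of kernels symmetrically, or apply the forward implication to $\varphi^{-1}$, which is again an $R$-isomorphism.

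An alternative route is through Lemma~\ref{lem:modular_indep_minimal_gen}. The map $\varphi$ restricts to an isomorphism $\langle \bm{v}_{1}, \dots, \bm{v}_{\ell} \rangle \cong \langle \varphi(\bm{v}_{1}), \dots, \varphi(\bm{v}_{\ell}) \rangle$, and $\mu_{R}$ is an isomorphism invariant because $\varphi(\mathfrak{m}W) = \mathfrak{m}\varphi(W)$. Hence $\mu_{R}$ takes the same value on both submodules, and the lemma follows.

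I do not expect a real obstacle. The only point needing care is that injectivity of $\varphi$, not surjectivity, is what makes the relation sets equal. So the statement in fact holds for any injective homomorphism $V \to V'$, and no finiteness or chain hypothesis on $R$ enters anywhere.
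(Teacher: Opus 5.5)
Your proof is correct and follows the paper's own approach: the paper likewise moves a linear relation through $\varphi^{-1}$ in one direction and through $\varphi$ in the other, which amounts to your observation that the two relation modules in $R^{\ell}$ coincide. Your side remarks are also correct, namely that injectivity alone suffices and that one could argue instead via Lemma~\ref{lem:modular_indep_minimal_gen} and the isomorphism invariance of $\mu_{R}$.
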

\begin{proof}
  For $\bm{v}_{1}, \dots, \bm{v}_{\ell} \in V$, consider a linear relation
  $\displaystyle \sum_{i = 1}^{\ell} \alpha_{i} \varphi(\bm{v}_{i}) = \bm{0}$.
  Applying the inverse $\varphi^{-1} \colon V \to V$, we obtain the linear relation
  $\displaystyle \sum_{i = 1}^{\ell} \alpha_{i} \bm{v}_{i} = \bm{0}$.
  Therefore, if $\bm{v}_{1}, \dots, \bm{v}_{\ell}$ are modular independent,
  then $\alpha_{i} \in \mathfrak{m}$; thus $\varphi(\bm{v}_{1}), \dots, \varphi(\bm{v}_{\ell})$
  are modular independent. Conversely, if $\bm{v}_{1}, \dots, \bm{v}_{\ell}$
  are modular dependent, then $\alpha_{i} \in R^{\times}$ for some $i \in \{ 1, \dots, \ell \}$;
  and hence $\varphi(\bm{v}_{1}), \dots, \varphi(\bm{v}_{\ell})$ are modular dependent.
\end{proof}

\begin{prop} \label{prop:modular_independence_is_invariant_under_elementary_row_operations}
  Let $J$ and $E$ be finite sets, and $G \in R^{J \times E}$ be
  an $\abs{J} \times \abs{E}$ matrix. Then performing operations of the type
  \begin{enumerate}[label=\textup{(R\arabic*)}, itemsep=0ex]
    \item Permutation of the rows,
    \item Multiplication of a row by a unit of $R$,
    \item Addition of a scalar multiple of one row to another
  \end{enumerate}
  preserve $M \lbrack G \rbrack$.
  In particular, if $G$ and $G^{\prime}$ are generator matrices for $C$,
  then $M \lbrack G \rbrack = M \lbrack G^{\prime} \rbrack$.
\end{prop}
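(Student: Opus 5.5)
Each of the operations (R1)--(R3) is left multiplication of $G$ by an invertible matrix $P \in R^{J \times J}$: a permutation matrix, a diagonal matrix with one unit entry and the others $1$, or an elementary transvection $I + a E_{jk}$ with $j \neq k$, whose inverse is $I - aE_{jk}$. So the plan is to show that replacing $G$ by $PG$ with $P$ invertible leaves unchanged, for every $I \subseteq E$, whether the columns $(G(i))_{i \in I}$ are modular independent. The columns of $PG$ are $\varphi(G(i))$, where $\varphi \colon R^{J} \to R^{J}$, $\bm{v} \mapsto P\bm{v}$, is an $R$-isomorphism. Lemma~\ref{lem:modular_independence_is_invariant_under_isomorphism} with $V = R^{J}$ then gives directly that $(G(i))_{i\in I}$ is modular independent if and only if $(\varphi(G(i)))_{i \in I}$ is. Hence the independent sets of $M[G]$ and $M[PG]$ coincide, so each single operation, and therefore any finite sequence of them, preserves $M[G]$.

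For the ``in particular'' claim, let $G, G'$ be generator matrices of $C$. By definition their rows form minimal generating sets of $C$, so both have $\ell = \mu_R(C)$ rows by Theorem~\ref{thm:number_of_minimal_generators}(ii). Writing each row of $G'$ as an $R$-combination of the rows of $G$ gives $G' = PG$ for some $P \in R^{\ell \times \ell}$, and Theorem~\ref{thm:number_of_minimal_generators}(iii) says $\det P$ is a unit, so $P$ is invertible. The argument of the previous paragraph applies verbatim to any invertible $P$; there is no need to decompose $P$ into elementary matrices, which is not guaranteed over arbitrary local rings. Hence $M[G] = M[G']$.

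The only point needing care is matching the row index sets: $G$ and $G'$ are indexed by possibly different sets $J, J'$ of the same size $\ell$. I would fix a bijection $J' \to J$ and identify $R^{J'}$ with $R^{J}$, so that $P$ acts on the common module $R^J$. I do not expect a real obstacle; the proof is essentially Lemma~\ref{lem:modular_independence_is_invariant_under_isomorphism} combined with Theorem~\ref{thm:number_of_minimal_generators}(iii).
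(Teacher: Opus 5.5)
Your proposal is correct and follows the paper's proof. Both arguments apply Lemma~\ref{lem:modular_independence_is_invariant_under_isomorphism} to the automorphism $\bm{v} \mapsto P\bm{v}$ of $R^{J}$, and both use Theorem~\ref{thm:number_of_minimal_generators}(iii) to obtain an invertible $P$ with $G' = PG$.

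One side remark is inaccurate, though harmless: over a local ring every invertible matrix is a product of elementary matrices of types (R1)--(R3). An invertible matrix stays invertible modulo $\mathfrak{m}$, so some entry of its first column is a unit and Gaussian elimination goes through. As you say, your argument does not need this.
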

\begin{proof}
  The former claim is immediate from Lemma~\ref{lem:modular_independence_is_invariant_under_isomorphism}.
  If $G$ and $G^{\prime}$ are generator matrices for $C$, then
  by Theorem~\ref{thm:number_of_minimal_generators}
  there exists an invertible matrix $P^{\mu_{R}(C) \times \mu_{R}(C)}$
  such that $G^{\prime} = PG$; hence Lemma~\ref{lem:modular_independence_is_invariant_under_isomorphism}
  yields $M\lbrack G \rbrack = M\lbrack G^{\prime} \rbrack$.
\end{proof}

Accordingly, we define $M(C) \coloneqq M \lbrack G \rbrack$,
where $G$ is any generator matrix for $C$.

\subsection{Minors: Deletion and Contraction}

We now study the behaviour of representability under minors.
We first describe the circuits in coding-theoretic terms.

\begin{prop} \label{prop:circuits}
  Let $M(C)$ be the independence system associated with an $R$-code $C \leq R^{E}$.
  Then the collection of circuits of $M(C)$ is
  \begin{equation*}
    \Min(\{ \supp(\bm{x}) \subseteq E \mid \bm{x} \in \psi(C^{\perp}) \}).
  \end{equation*}
\end{prop}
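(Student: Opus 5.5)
The plan is to translate modular dependence of a set of columns of a generator matrix into a statement about codewords of $C^{\perp}$. Once that is done, the claim is a formal consequence of the definition of circuits as minimal dependent sets. Fix a generator matrix $G \in R^{J \times E}$ for $C$, so that $M(C) = M[G]$, and write $\bm{a}_{e}$ for the column indexed by $e$. This is legitimate by Proposition~\ref{prop:modular_independence_is_invariant_under_elementary_row_operations}.

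\textbf{Step 1: relations among the columns are exactly dual codewords.} A vector $\bm{x} = (x_{e})_{e \in E} \in R^{E}$ satisfies $\sum_{e} x_{e}\bm{a}_{e} = \bm{0}$ if and only if $G\bm{x} = \bm{0}$. This holds if and only if $\bm{x}$ is orthogonal to every row of $G$. Since the rows generate $C$ and the inner product is bilinear, this is equivalent to $\bm{x} \in C^{\perp}$.

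Hence, by Definition~\ref{dfn:modular_independence}, a subset $D \subseteq E$ is dependent in $M(C)$ if and only if there is $\bm{x} \in C^{\perp}$ with $\supp(\bm{x}) \subseteq D$ and $x_{e} \in R^{\times}$ for some $e$. To see this, extend a relation on $D$ by zeros outside $D$; conversely, restrict a relation supported in $D$ to $D$.

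\textbf{Step 2: identify the witnesses with $\psi(C^{\perp})$.} Since $R$ is finite, Remark~\ref{rem:psi_and_Ann} shows that condition (ii) of Lemma~\ref{lem:psi_and_Ann} holds. Applying (i) to the submodule $C^{\perp} \leq R^{E}$ gives that $\psi(C^{\perp})$ is exactly the set of $\bm{x} \in C^{\perp}$ having some unit coordinate. Therefore
\begin{equation*}
  D \text{ is dependent} \iff \supp(\bm{x}) \subseteq D \text{ for some } \bm{x} \in \psi(C^{\perp}).
\end{equation*}
Note also that $\bm{0} \notin \psi(C^{\perp})$, so every such support is nonempty.

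\textbf{Step 3: minimality bookkeeping.} Let $\mathcal{S} \coloneqq \{ \supp(\bm{x}) \mid \bm{x} \in \psi(C^{\perp}) \}$. By Step 2, every member of $\mathcal{S}$ is dependent, since it is witnessed by its own $\bm{x}$.

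First, suppose $D$ is a circuit. Step 2 gives $S \in \mathcal{S}$ with $S \subseteq D$, and $S$ is dependent. Minimality of $D$ then forces $S = D$, so $D \in \mathcal{S}$. Moreover, no $S' \in \mathcal{S}$ is a proper subset of $D$, since $S'$ would be a dependent proper subset. Hence $D \in \Min(\mathcal{S})$.

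Conversely, let $S \in \Min(\mathcal{S})$. Then $S$ is dependent. If some proper subset $D' \subsetneq S$ were dependent, Step 2 would give $S' \in \mathcal{S}$ with $S' \subseteq D' \subsetneq S$, contradicting the minimality of $S$. Hence $S$ is a minimal dependent set, that is, a circuit.

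\textbf{Expected obstacle.} The only non-formal point is Step 2. For a relation to count as a modular dependence, it must have a unit coefficient, not merely be nonzero. Finiteness of $R$, through Lemma~\ref{lem:psi_and_Ann}, is exactly what makes the condition ``some unit coordinate'' coincide with membership in $\psi(C^{\perp})$. By contrast, the computation $\Ker G = C^{\perp}$ in Step 1 is routine.
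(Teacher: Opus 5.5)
Your proof is correct and follows essentially the same route as the paper. You identify column relations of a generator matrix with codewords of $C^{\perp}$, then use Lemma~\ref{lem:psi_and_Ann} together with Remark~\ref{rem:psi_and_Ann} to equate ``has a unit coordinate'' with membership in $\psi(C^{\perp})$, and finally compare minimal elements. Your version differs mainly in packaging: you isolate the equivalence ($D$ is dependent if and only if $D \supseteq \supp(\bm{x})$ for some $\bm{x} \in \psi(C^{\perp})$) and then do pure minimality bookkeeping, which makes explicit the converse direction that the paper handles by ``reversing the argument''.
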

\begin{proof}
  Let $\mathcal{C}$ be the collection of circuits in $M(C)$, and set
  \begin{equation*}
    \mathcal{C}^{\prime} \coloneqq \Min(\{ \supp(\bm{x}) \subseteq E \mid \bm{x} \in \psi(C^{\perp}) \}).
  \end{equation*}
  Let $G \in R^{J \times E}$ be a generator matrix for $C$,
  where $\abs{J} = \mu_{R}(C)$.
  Take an arbitrary circuit $D \in \mathcal{C}$.
  Since $D \notin \mathcal{I}$, the columns of $G_{J, D}$
  are modular dependent. Hence, there exists $(\alpha_{i})_{i \in D} \in R^{\abs{D}}$
  such that $\sum_{i \in D} \alpha_{i} \bm{g}_{i} = \bm{0}$ and
  $\alpha_{u} \in R^{\times}$ for some $u \in D$,
  where $\bm{g}_{i} \coloneqq G_{J, \{ i \}}$ denotes
  the column of $G$ indexed by $i$.
  By minimality of $D$, note that $\alpha_{i} \neq 0$ for all $i \in D$.
  We define a vector $\bm{x} = (x_{i})_{i \in E} \in R^{E}$ as
  \begin{equation*}
    x_{i} \coloneqq \begin{cases}
      \alpha_{i} & \text{if } i \in D \\
      0 & \text{if } i \in E \setminus D
    \end{cases} \quad \text{for all } i \in E.
  \end{equation*}
  Then, clearly $\supp(\bm{x}) = D$ and since
  \begin{equation*}
    G\bm{x}^{\top}
    = \sum_{i \in E} x_{i}\bm{g}_{i}
    = \sum_{i \in D} \alpha_{i} \bm{g}_{i} + \sum_{i \in E \setminus D} 0 \bm{g}_{i}
    = \bm{0},
  \end{equation*}
  we have $\bm{x} \in C^{\perp}$.
  Furthermore, since $x_{u} = \alpha_{u}$ is a unit,
  we have $\bm{x} \in \psi(C^{\perp})$.
  If there exists $\bm{w} \in \psi(C^{\perp})$ such that $\supp(\bm{w}) \subsetneq \supp(\bm{x})$,
  then $G \bm{w}^{\top} = \bm{0}$, and so $D \supsetneq \supp(\bm{w}) \notin \mathcal{I}$
  by Lemma~\ref{lem:psi_and_Ann} and Remark~\ref{rem:psi_and_Ann};
  a contradiction to the minimality of $D$.
  Consequently, we have $\supp(\bm{x}) \in \mathcal{C}^{\prime}$,
  and thus $\mathcal{C} \subseteq \mathcal{C}^{\prime}$.
  Conversely, taking $D \in \mathcal{C}^{\prime}$ and
  $\bm{x} \in \psi(C^{\perp})$ such that $\supp(\bm{x}) = D$,
  there exists a coordinate $i \in E$ where $x_{i} \in R^{\times}$
  by Lemma~\ref{lem:psi_and_Ann}.
  So we may reverse the argument above,
  which gives $\mathcal{C}^{\prime} \subseteq \mathcal{C}$,
  and hence $\mathcal{C} = \mathcal{C}^{\prime}$.
\end{proof}

Actually, Proposition~\ref{prop:circuits} holds for
any local ring satisfying Lemma~\ref{lem:psi_and_Ann}(ii)
(or equivalently Lemma~\ref{lem:psi_and_Ann}(i)).

Next, we establish that puncturing represents deletion of $M(C)$,
whether or not $M(C)$ is a matroid.

\begin{lem} \label{lem:deletion_is_puncturing}
  Let $C \leq R^{E}$ be an $R$-code.
  For all $X \subseteq E$, we have $M(C^{X}) = M(C) \backslash X$.
\end{lem}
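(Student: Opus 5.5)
The plan is to compare independent sets directly rather than circuits. Let $G \in R^{J \times E}$ be a generator matrix for $C$, with $\abs{J} = \mu_{R}(C)$. The rows of $G$ generate $C$, so the rows of the restriction $G_{J, E\setminus X}$ generate the punctured code $C^{X}$. This matrix need not be a generator matrix of $C^{X}$, because its rows may fail to be a minimal generating set. Still, by Theorem~\ref{thm:number_of_minimal_generators} we can relate it to a genuine generator matrix $G'$ of $C^{X}$.

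We may write every row of $G_{J,E\setminus X}$ as an $R$-combination of the rows of $G'$, and conversely. That is, there are matrices $P$ and $Q$ with
\begin{equation*}
  G_{J,E\setminus X} = P G' \quad\text{and}\quad G' = Q G_{J,E\setminus X}.
\end{equation*}
For $I \subseteq E\setminus X$ and $\bm{\alpha}\in R^{I}$, we then get $G_{J,I}\bm{\alpha} = \bm{0}$ if and only if $G'_{I}\bm{\alpha}=\bm{0}$. Each equation implies the other by left multiplication by $P$ or $Q$. Hence the columns indexed by $I$ admit exactly the same linear relations in both matrices, and modular independence is the same in both.

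Consequently $M(C^{X}) = M[G_{J,E\setminus X}]$ as independence systems on $E\setminus X$. It is cleaner to state this as a general remark: modular independence of columns depends only on the row space, through the column relations $\{\bm{\alpha} : G\bm{\alpha}=0\}$. This extends Proposition~\ref{prop:modular_independence_is_invariant_under_elementary_row_operations} to arbitrary (not necessarily minimal) spanning row sets.

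Next we identify $M[G_{J,E\setminus X}]$ with $M(C)\backslash X$. A subset $I \subseteq E\setminus X$ is independent in $M[G_{J,E\setminus X}]$ if and only if the columns $G_{J,\{i\}}$, $i\in I$, are modular independent. This is the same condition as independence of $I$ in $M(C)$. So the independent sets of $M(C^{X})$ are exactly the independent sets of $M(C)$ contained in $E\setminus X$.

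The final step translates this into circuits. The minimal dependent subsets of $E\setminus X$ are exactly the circuits of $M(C)$ avoiding $X$, which is precisely $\mathcal{C}\backslash X$ of Definition~\ref{dfn:deletion_and_contraction}. An alternative route is Proposition~\ref{prop:circuits} together with Lemma~\ref{lem:duality_for_puncturing_and_shortening}, using $(C^{X})^{\perp} = (C^{\perp})_{X}$ and the observation that $\psi$ of the shortened dual consists of the vectors of $\psi(C^{\perp})$ supported off $X$, restricted. That route needs Lemma~\ref{lem:psi_and_Ann}, so I prefer the direct argument above.

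The only real subtlety is the first step: $G_{J,E\setminus X}$ is not a generator matrix of $C^{X}$ in the paper's sense. It must be justified that this does not affect $M$, and that is handled by the two-sided relation between the column kernels described above.
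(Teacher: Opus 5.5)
Your proof is correct and follows the same structure as the paper's. You restrict $G$ to the columns $E\setminus X$, show that this spanning (but possibly non-minimal) matrix induces the same independence system as a genuine generator matrix of $C^{X}$, and then note that its columns are literally those of $G$. The only difference is the middle step: the paper row-reduces $G_{J,E\setminus X}$ by an invertible $P$ to a generator matrix of $C^{X}$ stacked on zero rows and then discards the zero rows. You instead use transition matrices in both directions to show the column kernels coincide, which is a slightly cleaner way of seeing that $M[\,\cdot\,]$ depends only on the row span.
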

\begin{proof}
  Let $G\in R^{J\times E}$ be a generator matrix for $C$,
  so that $C = \rowspan_{R}(G)$.
  Let $\pr_{E \setminus X} \colon R^{E} \to R^{E\setminus X}$
  be the coordinate projection, and let $G^{\prime} \coloneqq G_{J,\,E\setminus X}$
  be the matrix obtained from $G$ by deleting the columns indexed by $X$.
  Then
  \begin{equation*}
    \rowspan_{R}(G^{\prime})=\pr_{E \setminus X}(\rowspan_{R}(G))
    = \pr_{E \setminus X}(C) = C^{X},
  \end{equation*}
  so $G^{\prime}$ is a generating matrix for the punctured code $C^{X}$.

  Although $G^{\prime}$ need not be minimal, we may apply
  elementary row operations to $G^{\prime}$ so that
  \begin{equation*}
    PG^{\prime} = \begin{pmatrix} G^{X} \\ \bm{0}^{\top} \end{pmatrix}
  \end{equation*}
  where the nonzero rows of $G^{X}$ form a minimal generating set of $C^{X}$.
  Since $P$ is invertible, Lemma~\ref{lem:modular_independence_is_invariant_under_isomorphism}
  implies $M\lbrack PG^{\prime} \rbrack = M\lbrack G^{\prime} \rbrack$.
  Moreover, adjoining or deleting zero rows does not affect
  modular independence of columns,
  hence $M\lbrack PG^{\prime} \rbrack = M\lbrack G^{X} \rbrack =M(C^{X})$.
  Therefore $M(C^{X}) = M\lbrack G^{\prime} \rbrack$.

  Finally, for any $I\subseteq E\setminus X$,
  the submatrix $G_{J,I}^{\prime}$ coincides with $G_{J,I}$.
  Thus the columns indexed by $I$ are modular independent in $G^{\prime}$
  if and only if they are modular independent in $G$.
  Equivalently, the independent sets of $M\lbrack G^{\prime} \rbrack$
  are exactly the independent sets of $M\lbrack G \rbrack$
  contained in $E\setminus X$, i.e.
  $M\lbrack G^{\prime} \rbrack = M\lbrack G \rbrack \setminus X$.
  Since $M(C) = M\lbrack G \rbrack$, we conclude $M(C^{X}) = M(C) \setminus X$.
\end{proof}

The proof of Lemma~\ref{lem:deletion_is_puncturing} uses
only the definition of $M(C)$ in terms of modular independence
of the columns of a generator matrix, and therefore
does not require any finiteness or chain-ring assumption on $R$
(beyond locality and finite generation of $C$).
In general, shortening does not realise contraction,
even over a finite chain ring and even when
$M(C)$ is a matroid.

\begin{eg}\label{eg:shortening_not_representing_contraction}
  Let $R=\mathbb{Z}_4$ and let $C\le R^{\{1,2\}}$ be the code generated by
  \begin{equation*}
    G = \bordermatrix{
      & 1 & 2 \cr
      & 2 & 1 \cr
      & 0 & 2
    }.
  \end{equation*}
  Each singleton is modular independent, whereas
  \begin{equation*}
    1 \cdot \begin{pmatrix} 2 \\ 0 \end{pmatrix}
    + 2 \begin{pmatrix} 1 \\ 2 \end{pmatrix}
    = \begin{pmatrix} 0 \\ 0 \end{pmatrix},
  \end{equation*}
  so $\{ 1, 2 \}$ is the unique circuit of $M(C)$.
  Hence $M(C) \cong U_{1,2}$. Now
  \begin{equation*}
    C_{\{1\}}
    =\{(c_{2})\in \mathbb{Z}_{4} \mid (0, c_{2}) \in C \}
    =\langle (2) \rangle,
  \end{equation*}
  so $M(C_{\{1\}})$ has no circuits on the ground set $\{ 2 \}$.
  On the other hand, $M(C)/\{1\}$ is isomorphic to the uniform matroid $U_{0, 1}$,
  where the unique circuit is $\{ 2 \}$. Therefore,
  \begin{equation*}
    M(C_{\{1\}})\neq M(C)/\{1\}.
  \end{equation*}
\end{eg}

\begin{eg} \label{eg:shortening_of_matroid_is_not_matroid}
  Let $R = \mathbb{Z}_{4}$ and $C \leq R^{\lbrack 4 \rbrack}$ be
  the code generated by
  \begin{equation*}
    G = \bordermatrix{
      & 1 & 2 & 3 & 4 \cr
      & 2 & 1 & 0 & 2 \cr
      & 0 & 0 & 1 & 2
    }.
  \end{equation*}
  $M(C)$ is a matroid which is a $3$-point line with one double point.
  Then shortened code $C_{\{ 4 \}}$ has a generator matrix
  \begin{equation*}
    G^{\prime} = \bordermatrix{
      & 1 & 2 & 3 \cr
      & 2 & 1 & 1 \cr
      & 0 & 0 & 2
    }.
  \end{equation*}
  This shortening is not compatible with contraction
  because $M(C_{\{ 4 \}})$ is not even a matroid
  as in Example~\ref{eg:non_matroid}.
\end{eg}

To describe shortening in terms of a generator matrix,
we introduce the following operation.

\begin{dfn} \label{dfn:modular-independent_row_reduction}
  Let $G \in (R^{k})^{E}$ be a matrix over $R$, and $e \in E$.
  A \emph{minimal-generator reduction on $e$} is a sequence of
  row operations so that non-zero entries in the column indexed by $e$
  are modular independent.
\end{dfn}

For $e \in E$, let
\begin{equation*}
  \pr_{e} \colon C \to R, \; \bm{c} = (c_{i})_{i \in E} \mapsto c_{e}
\end{equation*}
denote the $e$-th coordinate map.
The entries in the $e$-th column generate $\pr_{e}(C)$,
and by Theorem~\ref{thm:number_of_minimal_generators}
one may apply invertible row operations
so that the non-zero entries in that column form
a minimal generating set of $\pr_{e}(C)$.

\begin{rem} \label{rem:modular-independent_row_reduction}
  Over a chain ring, as in the field case,
  the resulting matrix of a minimal-generator reduction
  has only one non-zero entry
  in the column indexed by $e$,
  as in Lemma~\ref{lem:generator_matrix_for_shortening} below.
  For example, consider the following matrix over $\mathbb{Z}_{4}$:
  \begin{equation*}
    \begin{pmatrix}
      1 & 1 & 2 \\
      0 & 2 & 2
    \end{pmatrix}.
  \end{equation*}
  Performing a minimal-generator reduction on the second column,
  we obtain the following matrix:
  \begin{equation*}
    \begin{pmatrix}
      1 & 1 & 2 \\
      2 & 0 & 2
    \end{pmatrix}.
  \end{equation*}
  Applying a minimal-generator reduction to the third column yields, for example,
  \begin{equation*}
    \begin{pmatrix}
      1 & 3 & 0 \\
      0 & 2 & 2  
    \end{pmatrix}
    \quad \text{or} \quad
    \begin{pmatrix}
      1 & 1 & 2 \\
      1 & 3 & 0
    \end{pmatrix},
  \end{equation*}
  showing that a minimal-generator reduction need not be unique.
\end{rem}

\begin{lem}\label{lem:generator_matrix_for_shortening}
  Let $R$ be a finite commutative chain ring with maximal ideal
  $\mathfrak{m} = \langle \theta \rangle$ and nilpotency index $\nu$.
  Let $C \leq R^{E}$ be an $R$-code with generator matrix $G$,
  and let $e\in E$.
  Assume that the column of $G$ indexed by $e$ is non-zero.
  If $G_{0}$ is obtained from $G$ by a minimal-generator reduction
  on the column indexed by $e$ and a permutation of the rows, then
  \begin{equation*}
    G_{0} =
    \begin{pmatrix}
      u\theta^{t} & \bm{a}^{\top} \\
      0 & G_{0}^{\prime}
    \end{pmatrix}
  \end{equation*}
  for some $u \in R^{\times}$ and $0 \leq t \leq \nu - 1$.
  Define
  \begin{equation*}
    \bm{x}_{e} \coloneqq
    \theta^{\nu - t}u^{-1} \bm{a} \in R^{E \setminus \{ e \}},
    \qquad
    G_{\{ e \}} \coloneqq
    \begin{pmatrix}
      \bm{x}_{e}^{\top} \\
      G_{0}^{\prime}
    \end{pmatrix}.
  \end{equation*}
  Then the row span of $G_{\{e\}}$ is the shortened code $C_{\{e\}}$.
\end{lem}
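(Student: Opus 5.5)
The plan has three parts: justify the displayed shape of $G_{0}$, prove the inclusion $\operatorname{rowspan}_{R}(G_{\{e\}}) \subseteq C_{\{e\}}$, and then prove the reverse inclusion. Throughout, the column indexed by $e$ is placed first, and the first row of $G_{0}$ is written as $\bm{g}_{1} = (u\theta^{t}, \bm{a}^{\top})$.

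For the shape, note that $\pr_{e}(C)$ is a non-zero ideal of the chain ring $R$, hence principal, so $\mu_{R}(\pr_{e}(C)) = 1$. After a minimal-generator reduction, the non-zero entries of column $e$ form a minimal generating set of $\pr_{e}(C)$, as remarked after Definition~\ref{dfn:modular-independent_row_reduction} using Theorem~\ref{thm:number_of_minimal_generators}. Such a set has exactly one element. Permuting rows puts that element in the first row, and writing it as $u\theta^{t}$ is possible by the valuation description of elements of $R$; here $t \le \nu-1$ because the column is non-zero. Row operations are invertible, so $G_{0}$ still generates $C$.

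For $\operatorname{rowspan}_{R}(G_{\{e\}}) \subseteq C_{\{e\}}$:
\begin{itemize}
\item Each row of $G_{0}^{\prime}$ is the restriction to $E\setminus\{e\}$ of a row of $G_{0}$ whose $e$-entry is $0$, so it lies in $C_{\{e\}}$.
\item The vector $\theta^{\nu-t}u^{-1}\bm{g}_{1} = (\theta^{\nu}, \bm{x}_{e}^{\top}) = (0, \bm{x}_{e}^{\top})$ lies in $C$, because $\theta^{\nu} = 0$. Restricting it to $E\setminus\{e\}$ shows $\bm{x}_{e} \in C_{\{e\}}$.
\item Since $C_{\{e\}}$ is a submodule, the whole row span of $G_{\{e\}}$ lies in it.
\end{itemize}

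For the reverse inclusion, take $\bm{c} \in C$ with $c_{e} = 0$ and write $\bm{c} = r\bm{g}_{1} + \sum_{j} s_{j}\bm{h}_{j}$, where the $\bm{h}_{j}$ are the rows $(0, \ast)$ of $G_{0}$. Comparing $e$-coordinates gives $r u\theta^{t} = 0$, so $r\theta^{t} = 0$, so $r \in \Ann_{R}(\theta^{t}) = \langle \theta^{\nu-t} \rangle$. The main obstacle is this annihilator identity, and I would settle it with the ord function. Write $r = w\theta^{s}$ with $w$ a unit. Then $r\theta^{t} = w\theta^{s+t}$, and this is zero iff $s + t \ge \nu$, since $\theta^{k} \ne 0$ for $k < \nu$. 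Hence $r = r^{\prime}\theta^{\nu-t}$ for some $r^{\prime} \in R$. It follows that $r\bm{g}_{1}$ restricted to $E\setminus\{e\}$ equals $r^{\prime}\theta^{\nu-t}\bm{a} = r^{\prime}u\,\bm{x}_{e}$. Therefore $\bm{c}|_{E\setminus\{e\}} = r^{\prime}u\,\bm{x}_{e} + \sum_{j} s_{j}\bm{h}_{j}|_{E\setminus\{e\}}$, which lies in the row span of $G_{\{e\}}$.

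When $t = 0$ we get $\bm{x}_{e} = \theta^{\nu}u^{-1}\bm{a} = 0$, which is harmless: it is just a zero row, and the statement only concerns the row span, not minimality.
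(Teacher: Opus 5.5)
Your proposal is correct and follows essentially the same route as the paper: the block shape comes from column $e$ having a single non-zero entry after the reduction, and the shortened code is identified via $\Ann_{R}(u\theta^{t}) = \langle \theta^{\nu-t} \rangle$, which makes $\lambda\bm{a}$ a multiple of $\bm{x}_{e}$. You are somewhat more explicit than the paper: you derive uniqueness of the non-zero entry from $\mu_{R}(\pr_{e}(C)) = 1$, prove the annihilator identity via $\ord_{\theta}$, and check the two inclusions separately, but the argument is the same.
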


\begin{proof}
  Since $R$ is a chain ring, any two non-zero elements of $R$ are modular dependent.
  Hence, after a minimal-generator reduction on the column indexed by $e$,
  that column has exactly one non-zero entry.
  After permuting the rows, we may therefore write $G_{0}$ in the displayed block form.

  Every codeword in the row span of $G_{0}$ is of the form
  \begin{equation*}
    (\lambda u\theta^t,\ \lambda \bm a+\bm y),
    \quad \text{where }
    \lambda\in R,\ \bm{y} \in \rowspan_{R}(G_{0}^{\prime}).
  \end{equation*}
  Its coordinate indexed by $e$ is zero if and only if
  $\lambda \in \Ann_{R}(u\theta^{t}) = \langle \theta^{\nu-t} \rangle$.
  Thus $\lambda = \alpha \theta^{\nu-t}u^{-1}$ for some $\alpha \in R$, and then
  \begin{equation*}
    \lambda\bm{a} + \bm{y} = \alpha \bm{x}_{e} + \bm{y}.
  \end{equation*}
  Therefore the puncturings of the codewords
  whose coordinate indexed by $e$ is zero are exactly
  the row span of $G_{\{ e \}}$.
\end{proof}

Now we introduce the notion of contractibility.

\begin{dfn}\label{dfn:contractibility}
  Let $C \leq R^{E}$ be an $R$-code. 
  Assume that $\pr_{e}(C) \neq \{ 0 \}$. Since $R$ is a chain ring,
  $\pr_{e}(C)$ is principal, so we may write
  \begin{equation*}
    \pr_{e}(C) = \langle \theta^{t} \rangle,
    \quad \text{where} \quad
    0 \leq t < \nu.
  \end{equation*}
  We say that $C$ is \emph{contractible by $e$}
  if there exists a codeword
  $\bm{c} = (c_{i})_{i \in E} \in C$ such that
  \begin{equation*}
    c_{e} = u \theta^{t}
    \quad\text{for some } u\in R^{\times},
    \qquad\text{and}\qquad
    c_{i} \in \langle \theta^{t}\rangle
    \quad\text{for all } i \in E.
  \end{equation*}
\end{dfn}

\begin{lem} \label{lem:shortening_with_matrix}
  Use the same notation as in Lemma~\ref{lem:generator_matrix_for_shortening},
  and write $\pr_{e}(C)=\langle \theta^{t}\rangle$.
  Then the following are equivalent:
  \begin{enumerate}[label=\textup{(\roman*)}, itemsep=0ex]
    \item $C$ is contractible by $e$.
    \item $C$ has a generator matrix of the form
      \begin{equation*}
        G_{0} =
        \begin{pmatrix}
        u\theta^{t} & \bm{a}^{\top} \\
        0 & G_{0}^{\prime}
        \end{pmatrix}
      \end{equation*}
      with $u \in R^{\times}$ and every entry of $a$
      belonging to $\langle \theta^{t} \rangle$.
  \end{enumerate}
  When these conditions hold, we write $G/\{ e \} \coloneqq G_{0}^{\prime}$
  for such a choice of $G_{0}$.
\end{lem}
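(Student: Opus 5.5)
The plan is to prove the two implications separately, working throughout with the block form of Lemma~\ref{lem:generator_matrix_for_shortening}. The implication (ii) $\Rightarrow$ (i) is immediate. The first row of $G_{0}$ is a codeword $\bm{c}$ with $c_{e} = u\theta^{t}$, $u \in R^{\times}$. Its remaining entries are the entries of $\bm{a}$, which lie in $\langle \theta^{t} \rangle$ by hypothesis, and trivially $c_{e} \in \langle \theta^{t}\rangle$. This is exactly the condition of Definition~\ref{dfn:contractibility}.

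For (i) $\Rightarrow$ (ii), fix a codeword $\bm{c} \in C$ with $c_{e} = u\theta^{t}$ and all $c_{i} \in \langle \theta^{t} \rangle$. Start from any generator matrix $G$ of $C$ and apply Lemma~\ref{lem:generator_matrix_for_shortening} to get $G_{1} = \left(\begin{smallmatrix} u_{1}\theta^{t_{1}} & \bm{a}_{1}^{\top} \\ 0 & G_{1}^{\prime} \end{smallmatrix}\right)$. Since the $e$-column of $G_{1}$ generates $\pr_{e}(C) = \langle \theta^{t} \rangle$, we have $t_{1} = t$, and after rescaling the first row by a unit we may take $u_{1} = u$. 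The codeword $\bm{c}$ lies in the row span, so $\bm{c} = \lambda \bm{g}_{1} + \bm{y}$ with $\bm{y} \in \rowspan_{R}(G_{1}^{\prime})$; comparing $e$-coordinates gives $\lambda u\theta^{t} = u\theta^{t}$. Now replace the first row of $G_{1}$ by $\bm{c}$, which gives
\begin{equation*}
  G_{0} = \begin{pmatrix} \bm{c} \\ 0 \;\; G_{1}^{\prime} \end{pmatrix}.
\end{equation*}
The first row of $G_{0}$ has $e$-entry $u\theta^{t}$ and all other entries in $\langle\theta^{t}\rangle$, as (ii) requires.

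It remains to show that $G_{0}$ is again a generator matrix, i.e.\ that its rows generate $C$ and form a minimal generating set. This is the main obstacle. The difficulty is that $\lambda$ is determined only modulo $\Ann_{R}(\theta^{t}) = \langle\theta^{\nu-t}\rangle$, so $\lambda$ itself need not be a unit and the row operation replacing $\bm{g}_{1}$ by $\bm{c}$ need not be invertible. The fix is to observe that $\lambda - 1 \in \langle \theta^{\nu - t}\rangle$; since $t < \nu$ this lies in $\mathfrak{m}$, so $\lambda \in 1 + \mathfrak{m} \subseteq R^{\times}$. Then $\bm{c} = \lambda\bm{g}_{1} + \bm{y}$ with $\lambda$ a unit and $\bm{y}$ in the span of the other rows, so $\bm{g}_{1} = \lambda^{-1}(\bm{c} - \bm{y})$. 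Hence the operation is an elementary row operation of type (R2)+(R3), and the new rows generate $C$. Minimality follows because the number of rows is unchanged and Theorem~\ref{thm:number_of_minimal_generators} applies (alternatively, use Proposition~\ref{prop:modular_independence_is_invariant_under_elementary_row_operations}).

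Finally, I would remark that $G/\{e\} = G_{0}^{\prime}$ depends on the choice of $G_{0}$, which matches the wording ``for such a choice'' in the statement. The case $\pr_{e}(C) = \{0\}$ is excluded by the standing assumption in Definition~\ref{dfn:contractibility}.
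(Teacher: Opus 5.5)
Your proof is correct, but you reach (ii) by a slightly different route from the paper.

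The paper works directly from $\bm c$. Since $c_e = u\theta^t \notin \mathfrak m\,\pr_e(C) = \langle\theta^{t+1}\rangle$, it gets $\bm c\notin\mathfrak m C$. By Theorem~\ref{thm:number_of_minimal_generators}(i), $\bm c$ therefore extends to a minimal generating set. With $\bm c$ as the first row, every other row has $e$-entry in $\langle c_e\rangle$ and is cleared by subtracting multiples of $\bm c$.

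You reverse the order. You first clear the $e$-column with an arbitrary pivot via Lemma~\ref{lem:generator_matrix_for_shortening}, then swap that pivot row for $\bm c$. The key check is that the coefficient $\lambda$ satisfies $\lambda-1\in\Ann_R(\theta^t)=\langle\theta^{\nu-t}\rangle\subseteq\mathfrak m$. Hence $\lambda$ is a unit and the swap is an invertible row operation. This unit computation is just the concrete form of the paper's observation that $\bm c\notin\mathfrak m C$.

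The paper's version is shorter and does not depend on Lemma~\ref{lem:generator_matrix_for_shortening}. Yours gives a small extra: when $C$ is contractible by $e$, the lower block of \emph{any} minimal-generator reduction can serve as $G/\{e\}$ after changing only the first row. That fits the paper's subsequent remark that the first row of $G_{\{e\}}$ then vanishes.
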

\begin{proof}
  (ii) $\Rightarrow$ (i) is immediate,
  since the first row of $G_{0}$ is a codeword
  satisfying the condition in Definition~\ref{dfn:contractibility}.
  To prove (i) $\Rightarrow$ (ii), let $\bm{c} = (c_{i})_{i \in E}\in C$
  be a codeword as in Definition~\ref{dfn:contractibility}, so that
  \begin{equation*}
    c_{e} = u\theta^t \quad\text{with } u\in R^{\times},
    \qquad
    c_{i} \in \langle \theta^{t}\rangle
    \quad\text{for all } i \in E.
  \end{equation*}
  Now $c_{e} = u\theta^{t} \notin \mathfrak{m}\pr_{e}(C) = \langle \theta^{t + 1} \rangle$,
  implies $\bm{c} \notin \mathfrak{m}C$. 
  Since $\bm{c} + \mathfrak{m}C \in (C/\mathfrak{m}C) \setminus \{ \bm{0} \}$,
  we may extend $\bm{c}$ to a minimal generating set for $C$ by Theorem~\ref{thm:number_of_minimal_generators}(i).
  Thus, we may choose a generator matrix of $C$ whose first row is $\bm{c}$.
  Since
  \begin{equation*}
    \pr_{e}(C) = \langle c_{e} \rangle = \langle \theta^{t} \rangle,
  \end{equation*}
  the $e$-th coordinate of every other row lies in $\langle c_{e} \rangle$.
  Hence, by subtracting suitable multiples of the first row,
  we may make the $e$-th coordinate of every other row equal to $0$.
  This yields a generator matrix of the required form.
\end{proof}

When $G$ is contractible by $e$, the first row of $G_{\{ e \}}$ is zero,
so deleting that row yields $G/\{ e \} = G_{0}^{\prime}$.
We also remark that $G$ is contractible by any element if $R$ is a field.
Then we have the following theorem as justification for our terminology:

\begin{thm}\label{thm:contraction_is_shortening_if_contractible}
  Let $R$ be a finite commutative chain ring,
  $C \leq R^{E}$ be an $R$-code, and $e \in E$.
  If $C$ is contractible by $e$, then
  \begin{equation*}
    M(C_{\{ e \}}) = M\lbrack G/\{ e \}\rbrack = M(C)/\{ e \}.
  \end{equation*}
\end{thm}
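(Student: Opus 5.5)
The plan is to prove the two equalities separately. The first is a matrix computation based on Lemma~\ref{lem:generator_matrix_for_shortening}. The second is a circuit comparison based on Proposition~\ref{prop:circuits} and Lemma~\ref{lem:duality_for_puncturing_and_shortening}.

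\emph{First equality.} By Lemma~\ref{lem:shortening_with_matrix}, contractibility gives a generator matrix $G_{0}$ of the stated block form in which every entry of $\bm{a}$ lies in $\langle \theta^{t} \rangle$. Write $\bm{a} = \theta^{t}\bm{b}$. Then
\begin{equation*}
  \bm{x}_{e} = \theta^{\nu-t}u^{-1}\theta^{t}\bm{b} = \theta^{\nu}u^{-1}\bm{b} = \bm{0}.
\end{equation*}
So Lemma~\ref{lem:generator_matrix_for_shortening} shows that $\rowspan_{R}(G_{0}^{\prime}) = C_{\{e\}}$. The matrix $G_{0}^{\prime}$ need not have a minimal set of rows. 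I would argue exactly as in the proof of Lemma~\ref{lem:deletion_is_puncturing}: reduce $G_{0}^{\prime}$ by invertible row operations to a generator matrix of $C_{\{e\}}$ plus zero rows. Lemma~\ref{lem:modular_independence_is_invariant_under_isomorphism} together with the harmlessness of zero rows then gives $M(C_{\{e\}}) = M\lbrack G/\{e\} \rbrack$.

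\emph{Second equality.} By Proposition~\ref{prop:circuits} and Lemma~\ref{lem:duality_for_puncturing_and_shortening}, the circuits of $M(C_{\{e\}})$ are
\begin{equation*}
  \Min\{\supp(\bm{y}) \mid \bm{y} \in \psi((C^{\perp})^{\{e\}})\}.
\end{equation*}
The circuits of $M(C)/\{e\}$ are $\Min\{D \setminus \{e\} \mid D \text{ a circuit of } M(C)\}$. The set $\{e\}$ is independent because the $e$-column of $G_{0}$ is nonzero, so the contraction is defined. Every support of a vector of $\psi(C^{\perp})$ contains a circuit, so the latter collection equals
\begin{equation*}
  \Min\{\supp(\bm{x}) \setminus \{e\} \mid \bm{x} \in \psi(C^{\perp})\}.
\end{equation*}
Since $\supp(\bm{x}|_{E\setminus\{e\}}) = \supp(\bm{x}) \setminus \{e\}$, it suffices to prove
\begin{equation*}
  \psi((C^{\perp})^{\{e\}}) = \{\bm{x}|_{E\setminus\{e\}} \mid \bm{x} \in \psi(C^{\perp})\}.
\end{equation*}
Here I use the description of $\psi$ from Lemma~\ref{lem:psi_and_Ann}, valid by Remark~\ref{rem:psi_and_Ann}: $\psi$ of a code consists of the codewords having some unit coordinate. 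The inclusion $\subseteq$ is immediate, since any preimage of a vector with a unit coordinate also has that unit coordinate.

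\emph{Main obstacle.} The reverse inclusion is the crux. A vector $\bm{x} \in \psi(C^{\perp})$ might have its only unit coordinate at $e$, and then its puncturing would not lie in $\psi$. Contractibility rules this out. Take $\bm{x} \in C^{\perp}$ and pair it with the first row $(u\theta^{t}, \theta^{t}\bm{b})$ of $G_{0}$:
\begin{equation*}
  \theta^{t}\Bigl(u x_{e} + \sum_{i \neq e} b_{i}x_{i}\Bigr) = 0,
\end{equation*}
hence $u x_{e} + \sum_{i \neq e} b_{i}x_{i} \in \langle \theta^{\nu-t} \rangle \subseteq \mathfrak{m}$, using $t < \nu$. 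If $x_{e}$ is a unit, then $\sum_{i \neq e} b_{i}x_{i}$ is a unit. This forces some $x_{i}$ with $i \neq e$ to be a unit, because otherwise the sum would lie in $\mathfrak{m}$. So every $\bm{x} \in \psi(C^{\perp})$ has a unit coordinate off $e$, and its restriction lies in $\psi((C^{\perp})^{\{e\}})$.

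With this equality of support families, the two $\Min$-collections coincide, and therefore $M(C_{\{e\}}) = M(C)/\{e\}$.
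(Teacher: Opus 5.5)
Your proof is correct. The first equality is essentially the paper's computation: the paper redoes inline the row-span argument that you extract from Lemma~\ref{lem:generator_matrix_for_shortening} with $\bm{x}_{e}=\bm{0}$. For the second equality, however, you take a genuinely different, dual route.

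The paper stays on the primal side. It compares the circuits of $M\lbrack G_{0}\rbrack$ and $M\lbrack G_{0}^{\prime}\rbrack$ through two claims about column relations of the block matrix. The first claim lifts a modular dependence among columns of $G_{0}^{\prime}$ to one among columns of $G_{0}$, by solving explicitly for the coefficient of the $e$-column; this uses $a_{i}\in\langle\theta^{t}\rangle$. The second claim shows, by the same $\langle\theta^{t+1}\rangle$ argument you use, that a dependence supported on a circuit through $e$ must have a unit coefficient off $e$.

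You instead work with supports of codewords of $C^{\perp}$ and $(C^{\perp})^{\{e\}}$, via Proposition~\ref{prop:circuits} and Lemma~\ref{lem:duality_for_puncturing_and_shortening}. The lifting step is then supplied by the nontrivial inclusion $(C_{\{e\}})^{\perp}\subseteq(C^{\perp})^{\{e\}}$. Your orthogonality computation with the contractibility codeword is exactly the dual form of the paper's second claim.

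Your route is more transparent and slightly more general. Contractibility enters only once, to exclude codewords of $C^{\perp}$ whose sole unit coordinate is $e$. So your argument proves $M(C_{\{e\}})=M(C)/\{e\}$ under that condition alone, and this condition is strictly weaker than contractibility. For example, $C=\langle(2,1),(4,0)\rangle\le\mathbb{Z}_{8}^{2}$ is not contractible by the first coordinate, yet $C^{\perp}=\{(0,0),(2,4),(4,0),(6,4)\}$ has no unit entries. Your route also explains Example~\ref{eg:shortening_not_representing_contraction}, where $(1,2)\in C^{\perp}$ has its only unit at the contracted coordinate.

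The paper's route, in exchange, is self-contained at the matrix level. It needs neither Proposition~\ref{prop:circuits} nor the double-dual property of Frobenius rings on which Lemma~\ref{lem:duality_for_puncturing_and_shortening} rests. It also exhibits $G/\{e\}$ directly as a matrix representing the contraction.
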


\begin{proof}
  By Lemma~\ref{lem:shortening_with_matrix}, we may choose
  a generator matrix for $C$ of the form
  \begin{equation*}
    G_{0} =
      \begin{pmatrix}
        u\theta^{t} & \bm{a}^{\top} \\
        0 & G_{0}^{\prime}
      \end{pmatrix},
  \end{equation*}
  where $u \in R^{\times}$, $\pr_{e}(C) = \langle \theta^{t} \rangle$,
  and every entry of $\bm{a}$ belongs to $\langle \theta^{t} \rangle$.

  We first show that $C_{\{ e \}}$ is generated by the rows of $G_{0}^{\prime}$.
  Let $\bm{x} = (u\theta^{t}, \bm{a}^{\top})$ be the first row of $G_{0}$,
  and let $K$ be the row span of $(0 \; G_{0}^{\prime})$.
  Then $C = \langle \bm{x} \rangle + K$.
  Take $\bm{c} \in C$ with $c_{e} = 0$. We may write
  $\bm{c} = \lambda \bm{x} + \bm{y}$ with $\lambda \in R,\ \bm{y} \in K$.
  Since every element of $K$ has $e$-th coordinate $0$,
  the condition $c_{e} = 0$ gives $\lambda u\theta^{t} = 0$.
  As $u$ is a unit, this is equivalent to
  \begin{equation*}
    \lambda\in \Ann(\theta^{t})=\langle \theta^{\nu-t}\rangle.
  \end{equation*}
  Because every entry of $\bm{a} = (a_{i})_{i \in E \setminus \{ e \}}$
  lies in $\langle \theta^{t} \rangle$,
  we have $\lambda \bm{a} = \bm{0}$.
  Therefore, after deleting the $e$-th coordinate, the vector $\bm{c}$
  is represented by an element of the row span of $G_{0}^{\prime}$.
  Hence
  \begin{equation*}
    C_{\{ e \}} = \rowspan_{R}(G_{0}^{\prime}),
  \end{equation*}
  and so
  \begin{equation*}
    M(C_{\{e\}}) = M\lbrack G_{0}^{\prime} \rbrack
    = M\lbrack G/\{ e \} \rbrack.
  \end{equation*}

  It remains to prove that
  \begin{equation*}
    M\lbrack G_{0}^{\prime} \rbrack
    = M\lbrack G_{0} \rbrack/\{ e \}.
  \end{equation*}
  Let $\mathcal{C}$ and $\mathcal{C}^{\prime}$ denote
  the collection of circuits of $M\lbrack G_{0} \rbrack$ and
  $M\lbrack G_{0}^{\prime} \rbrack$, respectively.
  By Definition~\ref{dfn:deletion_and_contraction},
  it suffices to show that
  $\mathcal{C}^{\prime} = \mathcal{C}/\{ e \}$.

  Write
  \begin{equation*}
    \bm{g}_{i} = \begin{cases}
      (u\theta^{t}, 0, \dots, 0)^\top & \text{if } i = e, \\
      (a_{i}, \bm{g}_{i}^{\prime \top})^{\top} & \text{if } i \in E \setminus \{ e \},
    \end{cases}
  \end{equation*}
  where $\bm{g}_{i}^{\prime}$ is the column of $G_{0}^{\prime}$ indexed by $i$,
  and $a_{i} \in \langle \theta^{t} \rangle$ for all $i \in E \setminus \{ e \}$.

  We first prove the following two claims.

  \begin{claim} \label{claim:D0_minus_e_is_included_in_S}
    If $S \subseteq E \setminus\{ e \}$ is dependent in $M\lbrack G_{0}^{\prime} \rbrack$,
    then there exists $D \in\mathcal{C}$ such that
    \begin{equation*}
      D \setminus \{ e \}\subseteq S.
    \end{equation*}
  \end{claim}
  \begin{proof}[Proof of Claim~\ref{claim:D0_minus_e_is_included_in_S}]
    Choose coefficients $(\alpha_i)_{i\in S}$, with some
    $\alpha_{j} \in R^{\times}$, such that
    $\displaystyle \sum_{i\in S}\alpha_{i} \bm{g}_{i}^{\prime} = \bm{0}$.
    Since $a_{i} \in \langle \theta^{t}\rangle$, we may write
    $a_{i} = \theta^{t} a_{i}^{\prime}$ ($i\in S$)
    for some $a_{i}^{\prime} \in R$. Set
    \begin{equation*}
      \beta \coloneqq -u^{-1}\sum_{i\in S} \alpha_{i} a_{i}^{\prime}.
    \end{equation*}
    Then
    \begin{equation*}
      \sum_{i\in S}\alpha_{i} \bm{g}_{i} + \beta \bm{g}_{e} = 0.
    \end{equation*}
    Thus $S \cup \{ e \}$ is dependent in $M\lbrack G_{0} \rbrack$,
    so it contains a circuit $D \in \mathcal{C}$.
    Therefore $D \setminus \{ e \} \subseteq S$
    which proves Claim~\ref{claim:D0_minus_e_is_included_in_S}.
  \end{proof}

  \begin{claim} \label{claim:D0_minus_e_is_dependent_in_MB}
    If $D \in \mathcal{C}$, then $D \setminus \{ e \}$ is
    dependent in $M\lbrack G_{0}^{\prime} \rbrack$.
  \end{claim}
  \begin{proof}[Proof of Claim~\ref{claim:D0_minus_e_is_dependent_in_MB}]
    If $e \notin D$, this is immediate from the lower block
    of a modular dependence relation on $D$.
    Suppose $e \in D$. Choose coefficients $(\gamma_{i})_{i\in D \setminus \{ e \}}$
    and $\delta$, with at least one of them a unit, such that
    \begin{equation*}
      \sum_{i \in D \setminus \{ e \}} \gamma_{i} \bm{g}_{i} + \delta \bm{g}_{e} = \bm{0}.
    \end{equation*}
    If $\gamma_{i} \in \mathfrak{m}$ for all $i \in D \setminus \{ e \}$,
    then
    \begin{equation*}
      \sum_{i \in D \setminus \{ e \}} \gamma_{i} a_{i} \in
      \mathfrak{m} \langle \theta^{t} \rangle
      =
      \langle \theta^{t+1}\rangle.
    \end{equation*}
    Hence the first coordinate gives
    \begin{equation*}
      u\theta^{t}\delta
      =
      -\sum_{i\in D \setminus \{ e \}} \gamma_{i} a_{i}
      \in \langle \theta^{t+1}\rangle.
    \end{equation*}
    Since $u \theta^{t}$ generates $\langle \theta^{t} \rangle$,
    this forces $\delta \in \mathfrak{m}$, contradicting
    the choice of the coefficients. Therefore
    $\gamma_{i} \in R^{\times}$ for some $i \in D \setminus\{ e \}$.
    Looking at the lower block, we obtain
    \begin{equation*}
      \sum_{i \in D \setminus \{ e \}}
        \gamma_{i} \bm{g}_{i}^{\prime} = \bm{0},
    \end{equation*}
    so $D \setminus \{ e \}$ is dependent in $M\lbrack G_{0}^{\prime} \rbrack$,
    which proves Claim~\ref{claim:D0_minus_e_is_dependent_in_MB}.
  \end{proof}

  Now let $D^{\prime} \in \mathcal{C}^{\prime}$.
  By Claim~\ref{claim:D0_minus_e_is_included_in_S},
  there exists $D \in \mathcal{C}$ such that
  $D \setminus \{ e \} \subseteq D^{\prime}$.
  By Claim~\ref{claim:D0_minus_e_is_dependent_in_MB},
  the set $D \setminus \{ e \}$ is dependent in $M\lbrack G_{0}^{\prime} \rbrack$.
  Since $D^{\prime}$ is a circuit of $M\lbrack G_{0}^{\prime} \rbrack$,
  we must have $D \setminus \{ e \} = D^{\prime}$.
  Moreover, if $D_{0}^{\prime} \subsetneq D^{\prime}$ were of the form
  $D_{0}^{\prime} = D_{0} \setminus \{ e \}$ for some
  $D_{0} \in \mathcal{C}$, then Claim~\ref{claim:D0_minus_e_is_dependent_in_MB}
  would imply that $D_{0}^{\prime}$ is dependent in $M\lbrack G_{0}^{\prime} \rbrack$,
  a contradiction. Hence $D^{\prime} \in \mathcal{C}/\{ e \}$.

  Conversely, let $D^{\prime} \in \mathcal{C}/\{ e \}$.
  Then $D^{\prime} = D \setminus \{ e \}$ for some $D \in \mathcal{C}$,
  and $D^{\prime}$ is minimal among such sets.
  By Claim~\ref{claim:D0_minus_e_is_dependent_in_MB},
  $D^{\prime}$ is dependent in $M\lbrack G_{0}^{\prime} \rbrack$.
  If some proper subset $D_{0}^{\prime} \subsetneq D^{\prime}$
  were dependent in $M\lbrack G_{0}^{\prime} \rbrack$,
  then Claim~\ref{claim:D0_minus_e_is_included_in_S} would yield
  a circuit $D_{0} \in \mathcal{C}$ such that
  \begin{equation*}
    D_{0} \setminus \{ e \} \subseteq D_{0}^{\prime} \subsetneq D^{\prime},
  \end{equation*}
  contradicting the minimality of $D^{\prime}$.
  Therefore every proper subset of $D^{\prime}$ is independent in
  $M\lbrack G_{0}^{\prime} \rbrack$, so $D^{\prime} \in \mathcal{C}^{\prime}$.

  Thus $\mathcal{C}^{\prime} = \mathcal{C}/\{e\}$,
  whence $M\lbrack G_{0}^{\prime} \rbrack = M\lbrack G_{0} \rbrack/\{ e \}$.
  Finally, Proposition~\ref{prop:modular_independence_is_invariant_under_elementary_row_operations}
  gives $M\lbrack G_{0} \rbrack = M(C)$.
  Therefore,
  \begin{equation*}
    M(C_{\{ e \}}) = M\lbrack G/\{ e \} \rbrack = M(C)/\{ e \}. \qedhere
  \end{equation*}
\end{proof}

We now extend contractibility from single coordinates to subsets.

\begin{dfn}\label{dfn:contractible_subset}
  Let $C \leq R^{E}$ be an $R$-code, and $X \subseteq E$.
  We say that $C$ is \emph{contractible} by $X$
  if there exists an ordering $X = \{ x_{1}, \dots, x_{\ell} \}$ such that,
  defining recursively
  \begin{equation*}
    C^{(0)} \coloneqq C, \qquad
    C^{(j)} \coloneqq \bigl(C^{(j-1)}\bigr)_{\{ x_{j} \}}
    \quad (j \in \lbrack \ell \rbrack),
  \end{equation*}
  the code $C^{(j-1)} \leq R^{E \setminus \{ x_{1}, \dots, x_{j-1} \}}$
  is contractible by $x_{j}$ for every $j \in \lbrack \ell \rbrack$.
\end{dfn}

\begin{cor}\label{cor:contractible_subset}
  Let $X \subseteq E$. If $C$ is contractible by $X$,
  then $M(C_{X}) = M(C)/X$.
\end{cor}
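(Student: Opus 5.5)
The plan is induction on $\ell = \abs{X}$, using the ordering $X = \{ x_{1}, \dots, x_{\ell} \}$ provided by Definition~\ref{dfn:contractible_subset}. The case $\ell = 0$ is trivial, since $C_{\emptyset} = C$ and $\mathcal{C}/\emptyset = \Min(\mathcal{C}) = \mathcal{C}$. The case $\ell = 1$ is exactly Theorem~\ref{thm:contraction_is_shortening_if_contractible}.

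For the inductive step, two facts are needed. The first is a code-theoretic identity: iterated shortening equals shortening by the union, that is,
\begin{equation*}
  \bigl(C_{Y}\bigr)_{\{ x \}} = C_{Y \cup \{ x \}}
  \qquad (x \notin Y).
\end{equation*}
This follows directly from the definition. A codeword of $C_{Y}$ that vanishes at $x$ is the restriction of a codeword $\bm{c} \in C$ with $\supp(\bm{c}) \cap (Y \cup \{ x \}) = \emptyset$. Conversely, every such restriction arises in this way. Consequently $C^{(j)} = C_{\{ x_{1}, \dots, x_{j} \}}$ for every $j$, and in particular $C^{(\ell)} = C_{X}$.

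The second fact is clutter-theoretic: contraction composes, in the sense that
\begin{equation*}
  (\mathcal{C}/Y)/\{ x \} = \mathcal{C}/(Y \cup \{ x \}).
\end{equation*}
I will check this from Definition~\ref{dfn:deletion_and_contraction}. Write $\mathcal{A} = \{ D \setminus Y \mid D \in \mathcal{C} \}$, so that $\mathcal{C}/Y = \Min(\mathcal{A})$. The left-hand side is $\Min\{ D' \setminus \{ x \} \mid D' \in \Min(\mathcal{A}) \}$, and the right-hand side is $\Min\{ A \setminus \{ x \} \mid A \in \mathcal{A} \}$. Every $A \in \mathcal{A}$ contains some minimal $D' \in \Min(\mathcal{A})$, and then $D' \setminus \{ x \} \subseteq A \setminus \{ x \}$. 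Hence the two families have the same minimal members. This settles the identity as an equality of collections. I also need the independence convention attached to contraction, namely that the set being contracted contains no circuit. The theorem's hypotheses supply this for each single contraction, and I will not dwell on it further.

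With both facts in hand, the chain of equalities runs as follows. Each $C^{(j-1)}$ is contractible by $x_{j}$, so Theorem~\ref{thm:contraction_is_shortening_if_contractible} applied to $C^{(j-1)}$ gives
\begin{equation*}
  M(C^{(j)}) = M(C^{(j-1)})/\{ x_{j} \}.
\end{equation*}
Iterating this over $j = 1, \dots, \ell$ yields
\begin{equation*}
  M(C_{X}) = M(C)/\{ x_{1} \}/\cdots/\{ x_{\ell} \} = M(C)/X,
\end{equation*}
where the last equality uses the composition identity for contraction.

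The main obstacle is bookkeeping rather than mathematics. The theorem must be applied to each $C^{(j-1)}$ as a code on the smaller ground set $E \setminus \{ x_{1}, \dots, x_{j-1} \}$, and the hypothesis of Definition~\ref{dfn:contractible_subset} is phrased exactly so that this application is legitimate. The only genuine verification is the minimality argument behind $(\mathcal{C}/Y)/\{ x \} = \mathcal{C}/(Y \cup \{ x \})$ for clutters, which I sketched above.
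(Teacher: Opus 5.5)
Your proposal is correct and follows the paper's proof. Both apply Theorem~\ref{thm:contraction_is_shortening_if_contractible} successively to each $C^{(j-1)}$, identify $C^{(\ell)}$ with $C_{X}$ from the definition of shortening, and identify the iterated contraction with $M(C)/X$. Your minimality check that $(\mathcal{C}/Y)/\{x\} = \mathcal{C}/(Y \cup \{x\})$ for clutters is a useful addition, since the paper uses this identity without comment.
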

\begin{proof}
  Choose an ordering $X = \{ x_{1}, \dots, x_{\ell} \}$ as in
  Definition~\ref{dfn:contractible_subset}, and define codes
  $C^{(j)}$ recursively by
  \begin{equation*}
    C^{(0)} \coloneqq C, \qquad
    C^{(j)} \coloneqq \bigl(C^{(j-1)}\bigr)_{\{ x_{j} \}}
    \quad (j \in \lbrack \ell \rbrack).
  \end{equation*}
  For each $j$, the code $C^{(j-1)}$ is contractible by $x_{j}$.
  Hence Theorem~\ref{thm:contraction_is_shortening_if_contractible} gives
  \begin{equation*}
    M\bigl(C^{(j)}\bigr) = M\bigl(C^{(j-1)}\bigr)/\{ x_{j} \}
    \qquad (j \in \lbrack \ell \rbrack).
  \end{equation*}
  Iterating, we obtain
  \begin{equation*}
    M\bigl(C^{(\ell)}\bigr)
    = M(C)/\{ x_{1} \}/\cdots/\{ x_{\ell}\}
    = M(C)/X.
  \end{equation*}
  On the other hand, by the definition of shortening,
  a codeword survives the successive shortenings by $x_{1}, \dots, x_{\ell}$
  if and only if all coordinates in $X$ are zero. Therefore
  \begin{equation*}
    C^{(\ell)}
    = \bigl(\cdots((C_{\{ x_{1} \}})_{\{ x_{2} \}})\cdots\bigr)_{\{ x_{\ell} \}}
    = C_{X}.
  \end{equation*}
  Hence $M(C_{X}) = M(C)/X$.
\end{proof}

\subsection{Duality}

Next we turn to duality.
Over a finite field, if a linear code $C$ represents a matroid,
then the dual code $C^{\perp}$ represents the dual matroid;
equivalently,
\begin{equation*}
  M(C^{\perp})=M(C)^{\ast}.
\end{equation*}
We now investigate to what extent this relation survives
for independence systems arising from modular independence
over finite chain rings.
Since a general independence system does not come equipped with
a satisfactory notion of duality defined via bases or circuits
alone, we begin by defining a dual independence system directly
from the rank function.
This construction is motivated by the usual dual rank formula
for matroids and by the duality theory of demi-matroids (see \cite{DemiMatroid}).

\begin{dfn} \label{dfn:dual_of_independence_system}
  Let $M = (E, \mathcal{I})$ be an independence system with rank function $r_{M}$.
  Define an auxiliary function $r_{M}^{\perp} \colon 2^{E} \to \mathbb{Z}$ as
  \begin{equation*}
    r_{M}^{\perp}(X) \coloneqq \abs{X} + r_{M}(E \setminus X) - r_{M}(E) \quad
    \text{for all } X \subseteq E.
  \end{equation*}
  Set
  \begin{equation*}
    \mathcal{I}^{\ast} \coloneqq \{ X \subseteq E : r_{M}^{\perp}(X) = \abs{X} \}.
  \end{equation*}
  We call $M^{\ast} \coloneqq (E, \mathcal{I}^{\ast})$ the \emph{dual} of $M$.
\end{dfn}

Note that, in general, $r_{M}^{\perp}$ need not coincide with the rank function $r_{M^{\ast}}$ of $M^{\ast}$.

\begin{prop} \label{prop:dual_of_independence_system}
  For any independence system $M$, the dual $M^{\ast}$ is
  an independence system.
\end{prop}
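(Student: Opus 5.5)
We must check (I1) $\emptyset \in \mathcal{I}^{\ast}$ and (I2) that $\mathcal{I}^{\ast}$ is closed under taking subsets. The only properties of $r_{M}$ we use are those recorded in Section~\ref{subsect:matroid_indep_sys_clutter} for independence systems: $r_{M}(\emptyset)=0$, monotonicity (R2), and the unit-increase property (R2+). Here $r_{M}(X)$ is the maximum size of an independent subset of $X$. Monotonicity of this max is clear, and (R2+) holds because removing $e$ from an independent set $I\subseteq X\cup\{e\}$ leaves an independent subset of $X$ by (I2).

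For (I1), we compute directly:
\begin{equation*}
  r_{M}^{\perp}(\emptyset) = 0 + r_{M}(E) - r_{M}(E) = 0 = \abs{\emptyset}.
\end{equation*}

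For (I2), first rewrite the defining condition. By the definition of $r_{M}^{\perp}$, $X\in\mathcal{I}^{\ast}$ holds if and only if $r_{M}(E\setminus X) = r_{M}(E)$. In words, the complement of $X$ has full rank, which is the familiar "complement spans" description of cobases and their subsets. Now let $X\in\mathcal{I}^{\ast}$ and $X'\subseteq X$. Then $E\setminus X \subseteq E\setminus X' \subseteq E$, so monotonicity gives
\begin{equation*}
  r_{M}(E) = r_{M}(E\setminus X) \le r_{M}(E\setminus X') \le r_{M}(E).
\end{equation*}
Hence $r_{M}(E\setminus X') = r_{M}(E)$, that is, $X'\in\mathcal{I}^{\ast}$.

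There is essentially no obstacle in this argument. The only point needing care is to use monotonicity of the abstract rank function of an arbitrary independence system. We must not use $\mu_{R}(V_{-})$ here, since Example~\ref{eg:not_monotonic} shows that $\mu_{R}(V_{-})$ can fail to be monotone; $r_{M}$, defined as a maximum over independent subsets, is always monotone. We can also note that $r_{M}^{\perp}(X)\le \abs{X}$ always holds by monotonicity. This confirms that the equality $r_{M}^{\perp}(X)=\abs{X}$ is the natural "full rank" condition, though this observation is not needed for the proof.
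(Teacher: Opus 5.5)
Your proposal is correct and follows essentially the same argument as the paper: you verify $r_{M}^{\perp}(\emptyset) = 0$ directly, then observe that membership in $\mathcal{I}^{\ast}$ amounts to $r_{M}(E \setminus X) = r_{M}(E)$ and conclude downward closure from monotonicity of $r_{M}$. The extra remarks about (R2+) and $\mu_{R}(V_{-})$ are harmless but not needed.
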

\begin{proof}
  First,
  \begin{equation*}
    r_{M}^{\perp}(\emptyset) = \abs{\emptyset} + r_{M}(E \setminus \emptyset) - r_{M}(E) = \abs{\emptyset},
  \end{equation*}
  so $\emptyset \in \mathcal{I}^{\ast}$.
  Now let $I_{1} \subseteq I_{2} \subseteq E$ with $I_{2} \in \mathcal{I}^{\ast}$.
  Since $r_{M}^{\perp}(I_{2}) = \abs{I_{2}}$, we have $r_{M}(E \setminus I_{2}) = r_{M}(E)$.
  By monotonicity of $r_{M}$,
  \begin{equation*}
    r_{M}(E) \geq r_{M}(E \setminus I_{1}) \geq r_{M}(E \setminus I_{2}) = r_{M}(E),
  \end{equation*}
  hence $r_{M}(E \setminus I_{1}) = r_{M}(E)$. Therefore
  \begin{equation*}
    r_{M}^{\perp}(I_{1}) = \abs{I_{1}} + r_{M}(E \setminus I_{1}) - r_{M}(E) = \abs{I_{1}},
  \end{equation*}
  so $I_{1} \in \mathcal{I}^{\ast}$. Thus $M^{\ast}$ is an independence system.
\end{proof}

\begin{dfn} \label{dfn:basis}
  Let $M = (E, \mathcal{I})$ be an independence system
  with the rank function $r_{M}$.
  We write
  \begin{equation*}
    \mathcal{B}(M) \coloneqq \{ B \in \mathcal{I} : \abs{B} = r_{M}(E) \}
  \end{equation*}
  and call the elements of $\mathcal{B}(M)$ the \emph{bases} of $M$.
\end{dfn}

\begin{rem}\label{rem:difference_of_dual}
  In some parts of combinatorial optimisation, one instead calls
  \emph{maximal} independent sets bases.
  One may then define a dual independence system by taking sets
  disjoint from such maximal independent sets,
  see, for instance, \cite[Section~13.3]{KorteVygen18}.
  For general independence systems, however, maximal independent sets
  need not have the same cardinality, so that construction is not adapted
  to rank complementation.
  We therefore use Definition~\ref{dfn:dual_of_independence_system},
  which is defined directly from the rank function and satisfies
  \begin{equation*}
    r_{M}^{\perp}(E) = \abs{E} - r_{M}(E).
  \end{equation*}
  When $M$ is a matroid, maximal and maximum independent sets coincide,
  and Definition~\ref{dfn:dual_of_independence_system} agrees with
  ordinary matroid duality.
\end{rem}

The following lemma gives a convenient description of $\mathcal{I}^{\ast}$
in terms of bases.

\begin{lem}\label{lem:Iast_via_bases}
  Let $M = (E, \mathcal{I})$ be an independence system with rank function $r_{M}$.
  Then
  \begin{equation*}
    \mathcal{I}^{\ast} = \{ I \subseteq E : \exists B \in \mathcal{B}(M) \text{ such that } I \subseteq E \setminus B \}
  \end{equation*}
\end{lem}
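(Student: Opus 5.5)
We prove the two inclusions separately, using only the definition of $\mathcal{I}^{\ast}$ through $r_{M}^{\perp}$ and the definition of $r_{M}$ as the maximum size of an independent subset. The key observation is that, by Definition~\ref{dfn:dual_of_independence_system}, a set $I$ lies in $\mathcal{I}^{\ast}$ if and only if $r_{M}(E \setminus I) = r_{M}(E)$. So the lemma reduces to the following claim: $r_{M}(E\setminus I) = r_{M}(E)$ exactly when $E \setminus I$ contains a basis of $M$ in the sense of Definition~\ref{dfn:basis}.

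\emph{Inclusion $\subseteq$.} Suppose $I \in \mathcal{I}^{\ast}$, so that $r_{M}(E \setminus I) = r_{M}(E)$. By the definition of the rank function, there is an independent set $B \subseteq E \setminus I$ with $\abs{B} = r_{M}(E \setminus I) = r_{M}(E)$. Hence $B \in \mathcal{B}(M)$, and $I \subseteq E \setminus B$ because $B \cap I = \emptyset$.

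\emph{Inclusion $\supseteq$.} Suppose $I \subseteq E \setminus B$ for some $B \in \mathcal{B}(M)$. Then $B \subseteq E \setminus I$, and $B$ is independent. The definition of $r_{M}$ as a maximum therefore gives
\[
r_{M}(E \setminus I) \geq \abs{B} = r_{M}(E).
\]
Since $E \setminus I \subseteq E$, every independent subset of $E\setminus I$ is also an independent subset of $E$, so $r_{M}(E \setminus I) \leq r_{M}(E)$. Combining the two inequalities gives equality, and hence $r_{M}^{\perp}(I) = \abs{I}$, that is, $I \in \mathcal{I}^{\ast}$.

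There is no real obstacle. The one point needing care is that the argument must not use any matroid property, since $M$ is only an independence system. It relies only on $r_{M}$ being a maximum over independent subsets, which makes it monotone and attained by some independent set. It does not use the equicardinality of maximal independent sets, which can fail here. This is also why bases are taken to be the \emph{maximum} independent sets of Definition~\ref{dfn:basis}, not merely the maximal ones.
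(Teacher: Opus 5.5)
Your proof is correct and takes the same route as the paper. Both reduce membership in $\mathcal{I}^{\ast}$ to $r_{M}(E \setminus I) = r_{M}(E)$ and then note that this holds exactly when $E \setminus I$ contains a maximum-size independent set; you write out the two directions (attainment of the maximum, and monotonicity of $r_{M}$) that the paper states in a single sentence.
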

\begin{proof}
  By definition,
  \begin{equation*}
    I \in \mathcal{I}^{\ast}
    \iff r_{M}^{\perp}(I) = \abs{I}
    \iff r_{M}(E \setminus I) = r_{M}(E).
  \end{equation*}
  The latter holds if and only if $E \setminus I$ contains a basis of $M$,
  i.e., if and only if there exists $B \in \mathcal{B}(M)$ such that
  $B \subseteq E \setminus I$. Equivalently, $I \subseteq E \setminus B$
  for some $B \in \mathcal{B}(M)$.
\end{proof}

Borrowing terminology from abstract simplicial complexes,
we call an independence system $M$ \emph{pure}
if all maximal independent sets have the same cardinality.
For general independence systems, failure of purity is
exactly the obstruction to involutive duality.

\begin{prop} \label{prop:dual_is_involutive_iff_pure}
  Let $M = (E, \mathcal{I})$ be an independence system with rank function $r_{M}$,
  and let $r_{M^{\ast}}$ denote the rank function of $M^{\ast}$.
  Then, for every $X \subseteq E$,
  \begin{equation*}
    r_{M^{\ast}}(X) = \max_{B \in \mathcal{B}(M)} \abs{X \setminus B}
    = \abs{X} - \min_{B \in \mathcal{B}(M)} \abs{X \cap B}.
  \end{equation*}
  Moreover, the following are equivalent:
  \begin{enumerate}[label=\textup{(\arabic*)}, itemsep=0ex]
    \item $r_{M^{\ast}} = r_{M}^{\perp}$.
    \item $r_{M}(X) = \max_{B \in \mathcal{B}(M)} \abs{X \cap B}$ for all $X \subseteq E$.
    \item Every independent set of $M$ is contained in a basis of $M$.
    \item $M$ is pure.
  \end{enumerate}
  Under these equivalent conditions, one also has $M^{\ast\ast} = M$.
\end{prop}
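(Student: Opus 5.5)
The plan is to compute $r_{M^{\ast}}$ directly from Lemma~\ref{lem:Iast_via_bases}, then run the cycle $(1)\Leftrightarrow(2)$, $(2)\Rightarrow(3)\Rightarrow(4)\Rightarrow(2)$, and finally deduce involutivity.

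\textbf{Rank formula.} By Lemma~\ref{lem:Iast_via_bases}, a set $I \subseteq X$ lies in $\mathcal{I}^{\ast}$ if and only if $I \subseteq X \setminus B$ for some $B \in \mathcal{B}(M)$. Hence the largest independent subsets of $X$ in $M^{\ast}$ are exactly the sets $X \setminus B$. This gives
\[
r_{M^{\ast}}(X)=\max_{B\in\mathcal{B}(M)}\abs{X\setminus B}=\abs{X}-\min_{B}\abs{X\cap B}.
\]
The set $\mathcal{B}(M)$ is nonempty, since a maximum independent set of $E$ has size $r_M(E)$.

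\textbf{$(1)\Leftrightarrow(2)$.} Writing $Y=E\setminus X$ and using $\abs{B}=r_M(E)$, we have $\abs{X\cap B}=r_M(E)-\abs{Y\cap B}$. Substituting into the rank formula,
\[
r_{M^{\ast}}(X)=\abs{X}-r_M(E)+\max_B\abs{Y\cap B}.
\]
Comparing with $r_M^{\perp}(X)=\abs{X}+r_M(Y)-r_M(E)$, condition (1) holds for all $X$ exactly when $r_M(Y)=\max_B\abs{Y\cap B}$ for all $Y$. Since $X\mapsto E\setminus X$ is a bijection of $2^E$, this is precisely (2).

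\textbf{$(2)\Rightarrow(3)\Rightarrow(4)\Rightarrow(2)$.} For $(2)\Rightarrow(3)$: if $I\in\mathcal{I}$ then $r_M(I)=\abs{I}$, so (2) gives a basis $B$ with $\abs{I\cap B}=\abs{I}$, that is, $I\subseteq B$. For $(3)\Rightarrow(4)$: a maximal independent set lies in some basis, hence equals it, and so has cardinality $r_M(E)$. For $(4)\Rightarrow(2)$: the inequality $\max_B\abs{X\cap B}\le r_M(X)$ always holds, because $X\cap B$ is independent by (I2). For the reverse, take an independent $I\subseteq X$ with $\abs{I}=r_M(X)$ and extend it to a maximal independent set; by purity this set has size $r_M(E)$, so it is a basis $B$ with $\abs{X\cap B}\ge\abs{I}$.

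\textbf{Involutivity.} We have $M^{\ast\ast}=(E,\{X: r_{M^{\ast}}^{\perp}(X)=\abs{X}\})$, and $r_{M^{\ast}}^{\perp}(X)=\abs{X}$ iff $r_{M^{\ast}}(E\setminus X)=r_{M^{\ast}}(E)$. Using (1), $r_{M^{\ast}}=r_M^{\perp}$, so $r_{M^{\ast}}(E\setminus X)=\abs{E\setminus X}+r_M(X)-r_M(E)$ and $r_{M^{\ast}}(E)=\abs{E}-r_M(E)$. The condition therefore becomes $r_M(X)=\abs{X}$, i.e.\ $X\in\mathcal{I}$, so $M^{\ast\ast}=M$.

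\textbf{Main obstacle.} The step needing the most care is the reindexing in $(1)\Leftrightarrow(2)$, where $r_M(E\setminus X)$ must be matched with $\max_B$ over the complement. Everything else is bookkeeping with (I2) and maximality.
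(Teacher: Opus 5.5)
Your proof is correct and follows the paper's argument essentially step for step. You derive the rank formula from Lemma~\ref{lem:Iast_via_bases}, use the complement reindexing $\abs{X\cap B}=r_M(E)-\abs{(E\setminus X)\cap B}$ for (1)$\Leftrightarrow$(2), and use basis-extension arguments for (2)--(4), arranged as a cycle rather than the paper's pairwise equivalences. Your one real variation is the last step: you get $M^{\ast\ast}=M$ by substituting (1) into the definition of $r^{\perp}_{M^{\ast}}$, whereas the paper notes that the bases of $M^{\ast}$ are the complements $E\setminus B$ and applies Lemma~\ref{lem:Iast_via_bases} to $M^{\ast}$ together with (3). Both work, though in the rank formula you should say only that every $M^{\ast}$-independent subset of $X$ lies in some $X\setminus B$, not that the sets $X\setminus B$ are exactly the largest ones.
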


\begin{proof}
  (1) $\Leftrightarrow$ (2):
  By Lemma~\ref{lem:Iast_via_bases}, an independent set $I \subseteq X$
  belongs to $I^{\ast}$ if and only if there exists $B \in \mathcal{B}(M)$
  such that $I \subseteq X \cap (E\setminus B)$. Hence
  \begin{equation*}
    r_{M^{\ast}}(X)
    = \max_{B \in \mathcal{B}(M)} \abs{X\cap (E\setminus B)}
    = \max_{B\in \mathcal{B}(M)} \abs{X \setminus B}
    = \abs{X} - \min_{B\in \mathcal{B}(M)} \abs{X \cap B}.
  \end{equation*}

  For each $B\in \mathcal{B}(M)$, we have
  \begin{equation*}
    \abs{(E\setminus X) \cap B} = r_{M}(E) - \abs{X \cap B}.
  \end{equation*}
  Therefore
  \begin{equation*}
    r_{M^{\ast}}(X) = r_{M}^{\perp}(X)
    \quad \text{for all } X \subseteq E
  \end{equation*}
  if and only if
  \begin{equation*}
    r_{M}(Y) = \max_{B\in \mathcal{B}(M)} \abs{Y\cap B}
    \quad \text{for all } Y \subseteq E,
  \end{equation*}
  and this proves (1) $\Leftrightarrow$ (2).

  (2) $\Leftrightarrow$ (3):
  Assume (2), and let $I \in \mathcal{I}$.
  Then $r_{M}(I) = \abs{I}$, so there exists $B\in \mathcal{B}(M)$
  such that $\abs{I} = r_{M}(I) = \abs{I \cap B}$.
  Hence $I\subseteq B$. Conversely, assume (3), and let $X\subseteq E$.
  Choose an independent set $I \subseteq X$ with $\abs{I} = r_{M}(X)$.
  By (3), there exists $B \in \mathcal{B}(M)$ such that $I \subseteq B$.
  Then
  \begin{equation*}
    r_{M}(X) = \abs{I} \leq \abs{X\cap B} \leq r_{M}(X),
  \end{equation*}
  which gives (2).

  (3) $\Leftrightarrow$ (4):
  Finally, since $E$ is finite, every independent set is contained in
  a maximal independent set. Therefore (3) holds if and only if
  every maximal independent set is a basis, namely if and only if
  $M$ is pure. Thus (3) $\Leftrightarrow$ (4).

  Under these equivalent conditions, the bases of $M^{\ast}$ are
  exactly the complements $E \setminus B$ with $B\in \mathcal{B}(M)$.
  Applying Lemma~\ref{lem:Iast_via_bases} to $M^{\ast}$,
  we obtain
  \begin{equation*}
    I^{\ast\ast}
    = \{ X \subseteq E : X \subseteq B \text{ for some } B \in \mathcal{B}(M)\}.
  \end{equation*}
  By (3), the right-hand side is exactly $I$. Hence $M^{\ast\ast} = M$.
\end{proof}

Thus, the obstruction to ordinary duality for a general independence system
is precisely the failure of purity (equivalently, the failure of basis extension).

As the following example shows, this duality need not hold in general;
moreover, even when $M(C)$ is a matroid,
the associated independence system $M(C^{\perp})$ may fail to be a matroid.

\begin{eg} \label{eg:duality_may_fail}
  Set $E \coloneqq \{ 1, 2, 3 \}$ and
  let $C \leq \mathbb{Z}_{4}^{E}$ be generated by
  \begin{equation*}
    G = \bordermatrix{
      & 1 & 2 & 3 \cr
      & 1 & 2 & 0 \cr
      & 0 & 2 & 2
    } \in (\mathbb{Z}_{4}^{2})^{E}.
  \end{equation*}
  Then, it is straightforward that $M(C) \cong U_{2, 3}$.
  On the other hand, 
  \begin{equation*}
  H = \bordermatrix{
      & 1 & 2 & 3 \cr
      & 2 & 1 & 1 \cr
      & 0 & 0 & 2
    } \in (\mathbb{Z}_{4}^{2})^{E}
  \end{equation*}
  is a parity-check matrix for $C$, but as we see in Example~\ref{eg:non_matroid},
  $M\lbrack H \rbrack = M(C^{\perp})$ is not a matroid.

  Set $M \coloneqq M(C^{\perp})$.
  Since $\mathcal{I} = \{ \emptyset, \{ 1 \}, \{ 2 \}, \{ 3 \}, \{ 2, 3 \} \}$,
  we get $\mathcal{I}^{\ast} = \{ \emptyset, \{ 1 \} \}$ by Definition~\ref{dfn:dual_of_independence_system}.
  However, taking the dual of $M^{\ast}$, $\mathcal{I}^{\ast\ast} = \{ \emptyset, \{ 2 \}, \{ 3 \}, \{ 2, 3 \} \}$,
  and thus $M^{\ast\ast} \neq M$.
\end{eg}

\begin{rem} \label{rem:duality_may_fail_even_if_free}
  We remark that, over a general local ring, the duality above can fail even for free codes.
  Set $E \coloneqq \{ 1, 2, 3, 4 \}$, and
  $R \coloneqq \mathbb{F}_{2} \lbrack x, y \rbrack/ \langle x^{2}, y^{2} \rangle$.
Then, the maximal ideal is $\mathfrak{m} = \langle x, y \rangle$.
  Let $C \leq R^{E}$ be the $R$-code generated by
  \begin{equation*}
    G = \bordermatrix{
      & 1 & 2 & 3 & 4 \cr
      & 1 & 0 & 0 & y \cr
      & 0 & 1 & x + y + xy & x + y
    } \in (R^{2})^{E}.
  \end{equation*}
  Since $G$ is in systematic form, $C$ is free.
  The matroid $M(C)$ is isomorphic to the single-element extension
  of $U_{2, 3}$ by adjoining a parallel element
  (elements $2$ and $3$ are parallel). On the other hand,
  \begin{equation*}
    H = \bordermatrix{
      & 1 & 2 & 3 & 4 \cr
      & 0 & x + y + xy & 1 & 0 \cr
      & y & x + y & 0 & 1
    } \in (R^{2})^{E}
  \end{equation*}
  is a parity-check matrix for $C$.
  However, the independence system $M\lbrack H \rbrack = M(C^{\perp})$
  violates the augmentation property (I3) with
  $I_{1} = \{ 2, 4 \}$ and $I_{2} = \{ 1, 2, 3 \}$.
  Indeed, $I_{1} \in \mathcal{I}$ is immediate.
  To see $I_{2} \in \mathcal{I}$, consider a linear relation
  among the columns of $H$ indexed by $I_{2}$:
  \begin{equation*}
    \alpha_{1} \begin{pmatrix} 0 \\ y \end{pmatrix} + \alpha_{2} \begin{pmatrix} x + y + xy \\ x + y \end{pmatrix} + \alpha_{3} \begin{pmatrix} 1 \\ 0 \end{pmatrix} = \begin{pmatrix} 0 \\ 0 \end{pmatrix}.
  \end{equation*}
  From the first coordinate, we obtain $\alpha_{3} = \alpha_{2}(x + y + xy) \in \mathfrak{m}$.
  If $\alpha_{2}$ were a unit, then $\alpha_{2}(x + y)$
  would have a nonzero $x$-term, whereas $\alpha_{1}y$ has no $x$-term;
  this contradicts the second coordinate equation
  $\alpha_{1}y + \alpha_{2}(x + y) = 0$.
  Hence $\alpha_{2} \in \mathfrak{m}$.
  It follows that $\alpha_{2}(x + y) \in \mathfrak{m}^{2} = \langle xy \rangle$,
  so the second coordinate implies $\alpha_{1} y \in \langle xy \rangle$,
  forcing the constant term of $\alpha_{1}$ to be zero, i.e.,
  $\alpha_{1} \in \mathfrak{m}$.
  Therefore $I_{2} \in \mathcal{I}$.
  The following two nontrivial linear relations
  \begin{align*}
    1 \cdot \begin{pmatrix} 0 \\ y \end{pmatrix} + 0 \begin{pmatrix} x + y + xy \\ x + y \end{pmatrix} + y \begin{pmatrix} 0 \\ 1 \end{pmatrix} &= \begin{pmatrix} 0 \\ 0 \end{pmatrix} \text{ and} \\
    1 \cdot \begin{pmatrix} x + y + xy \\ x + y \end{pmatrix} + (x + y + xy) \begin{pmatrix} 1 \\ 0 \end{pmatrix} + (x + y) \begin{pmatrix} 0 \\ 1 \end{pmatrix} &= \begin{pmatrix} 0 \\ 0 \end{pmatrix}
  \end{align*}
  show that $\{ 1, 2, 4 \}, \{ 2, 3, 4 \} \notin \mathcal{I}$.
  Consequently, no element of $I_{2} \setminus I_{1} = \{ 1, 3 \}$
  can be adjoined to $I_{1}$ while preserving independence,
  and thus $M\lbrack H \rbrack$ is not a matroid.
\end{rem}

However, if $R$ is a finite commutative chain ring,
we have $M(C)^{\ast} = M(C^{\perp})$ if $C$ is free.

\begin{lem} \label{lem:col_modular_indep_iff_row_modular_indep}
  For all $\ell \times \ell$ square matrices $A$,
  the columns of $A$ are modular independent if and only if
  the columns of $A^{\top}$ (or equivalently the rows of $A$)
  are modular independent.
\end{lem}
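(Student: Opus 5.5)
The plan is to reduce both sides of the equivalence to one statement about the Smith normal form of $A$. Throughout we use the standing assumption of this section that $R$ is a finite commutative chain ring with $\mathfrak{m} = \langle \theta \rangle$ and nilpotency index $\nu$. The invariance we need is broader than Proposition~\ref{prop:modular_independence_is_invariant_under_elementary_row_operations}: we also need invariance under \emph{column} operations, which holds because the statement concerns a set of exactly $\ell$ columns.

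\textbf{Step 1 (invariance).} Let $P, Q \in R^{\ell \times \ell}$ be invertible.
\begin{itemize}
  \item Left multiplication by $P$ is an $R$-automorphism of $R^{\ell}$. By Lemma~\ref{lem:modular_independence_is_invariant_under_isomorphism}, the columns of $PA$ are modular independent iff those of $A$ are.
  \item The columns of $AQ$ span the same submodule $V \leq R^{\ell}$ as the columns of $A$, and there are still $\ell$ of them.
  \item By Lemma~\ref{lem:modular_indep_minimal_gen}, the $\ell$ columns are modular independent iff $\mu_{R}(V) = \ell$.
\end{itemize}
Hence the columns of $A$ are modular independent iff those of $PAQ$ are. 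Applying the same reasoning to $A^{\top}$, with $(PAQ)^{\top} = Q^{\top}A^{\top}P^{\top}$, the columns of $A^{\top}$ are modular independent iff those of $(PAQ)^{\top}$ are.

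\textbf{Step 2 (reduction to a diagonal matrix).} By the Smith normal form over the chain ring $R$, choose invertible $P, Q$ with
\[
  PAQ = D = \diag(\theta^{\lambda_1}, \dots, \theta^{\lambda_r}, 0, \dots, 0), \qquad 0 \le \lambda_i \le \nu - 1.
\]
Since $D^{\top} = D$, Step 1 reduces the lemma to showing that the columns of $D$ are modular independent iff $r = \ell$. The condition $r = \ell$ is symmetric in rows and columns, which gives the equivalence for $A$ and $A^{\top}$.

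\textbf{Step 3 (the diagonal case).}
\begin{itemize}
  \item If $r < \ell$, then $D$ has a zero column, and the relation $1 \cdot \bm{0} = \bm{0}$ has a unit coefficient. So the columns are modular dependent.
  \item If $r = \ell$, a relation $\sum_i \alpha_i \theta^{\lambda_i}\bm{e}_i = \bm{0}$ forces $\alpha_i \theta^{\lambda_i} = 0$ for each $i$. Since $\lambda_i < \nu$ and $\theta^{\lambda_i} \neq 0$, no $\alpha_i$ can be a unit, so every $\alpha_i \in \mathfrak{m}$. Hence the columns are modular independent.
\end{itemize}

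\textbf{Main obstacle.} The one non-routine point is the column-operation invariance in Step 1. It relies on the number of vectors staying equal to $\ell$, so that Lemma~\ref{lem:modular_indep_minimal_gen} applies to the common span. It also relies on the existence of Smith normal form, which is where the chain-ring hypothesis enters. Over a general local ring the argument breaks down; consistently with this, Remark~\ref{rem:duality_may_fail_even_if_free} shows that the subsequent duality can fail there even for free codes.
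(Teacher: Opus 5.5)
Your proof is correct and follows essentially the same route as the paper. Both reduce via the Smith normal form $PAQ = D$, observe that $A^{\top}$ reduces to $D^{\top} = D$, and check the diagonal case directly. The only difference is how invariance under $P$ and $Q$ is justified: the paper uses the syzygy criterion of Remark~\ref{rem:modular_indep_with_syzygy} together with $\Ker(PAQ) = Q^{-1}\Ker A$ and $Q^{-1}(\mathfrak{m}R^{\ell}) = \mathfrak{m}R^{\ell}$, whereas you use Lemma~\ref{lem:modular_independence_is_invariant_under_isomorphism} for $P$ and the $\mu_{R}$-criterion of Lemma~\ref{lem:modular_indep_minimal_gen} for $Q$, and both are valid.
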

\begin{proof}
  Let $\mathfrak{m} = \langle \theta\rangle$, and
  let $\nu$ be the nilpotency index of $\mathfrak{m}$.
  Since $R$ is a PIR by Lemma~\ref{lem:noetherian_chain_iff_local_PIR},
  the matrix $A$ admits a Smith normal form. Thus there
  exist invertible matrices $P, Q\in R^{\ell \times \ell}$ such that
  \begin{equation*}
    PAQ = D \coloneqq \diag(d_{1}, \dots, d_{s}, 0, \dots, 0),
  \end{equation*}
  where $d_{i} \mid d_{i+1}$ for $1 \leq i < s$.

  By Remark~\ref{rem:modular_indep_with_syzygy},
  the columns of a square matrix $A$ are modular independent
  if and only if $\Ker A \subseteq \mathfrak{m}R^{\ell}$.
  Since $P$ and $Q$ are invertible, they induce automorphisms of $R^{\ell}$
  preserving $\mathfrak{m} R^{\ell}$.
  Hence
  \begin{equation*}
    \Ker A \subseteq \mathfrak{m}R^{\ell}
    \iff
    \Ker(PAQ)\subseteq \mathfrak{m}R^{\ell}.
  \end{equation*}

  We now examine the diagonal matrix $D$.
  If $D$ has a zero diagonal entry,
  then the corresponding standard basis vector belongs to
  $\Ker D\setminus \mathfrak{m}R^{\ell}$.
  Hence the columns of $D$, and therefore those of $A$,
  are modular dependent.

  Conversely, assume that $D$ has no zero diagonal entry.
  Then $s = \ell$, and each $d_{i}$ is non-zero.
  Since $R$ is a finite commutative chain ring,
  every non-zero element has the form
  $u_{i}\theta^{t_{i}}$ with $u_{i} \in R^{\times}$
  and $0 \leq t_{i} \leq \nu - 1$.
  Let $\bm{x} = (x_{1}, \dots, x_{\ell})^{\top} \in \Ker D$.
  Then $d_{i} x_{i} = 0$ for every $i \in \lbrack \ell \rbrack$, so
  if $u_{i} \theta^{t_{i}} x_{i} = 0$, then 
  $x_{i} \in \theta^{\nu - t_{i}}R \subseteq \mathfrak{m}$.
  Thus $\Ker D \subseteq \mathfrak{m}R^{\ell}$.
  Therefore the columns of $A$ are modular independent
  if and only if $D$ has no zero diagonal entry.

  Finally, transposing $PAQ=D$, we obtain
  \begin{equation*}
    Q^{\top} A^{\top} P^{\top} = D.
  \end{equation*}
  Hence $A$ and $A^{\top}$ have the same Smith normal form $D$.
  Therefore the columns of $A^{\top}$ are modular independent
  if and only if $D$ has no zero diagonal entry, equivalently
  if and only if the columns of $A$ are modular independent.
\end{proof}

The next lemma shows that, for matrices in systematic parity-check form
over a finite commutative chain ring, every independent set extends to a basis.

\begin{lem}\label{lem:extend_indep_in_H}
  Let $k^{\ast} \geq 1$ and $H = \lbrack B \mid I_{k^{\ast}} \rbrack \in R^{k^{\ast} \times n}$.
  Then every modular independent set of columns of $H$ is contained
  in a modular independent set of cardinality $k^{\ast}$.
  In particular, every independent set of $M \lbrack H \rbrack$
  is contained in some basis of $M \lbrack H \rbrack$.
\end{lem}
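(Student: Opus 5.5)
The plan is to extend a given modular independent set $S$ of columns of $H$ by columns of the identity block $I_{k^{\ast}}$, and to reduce the whole question to choosing a square row-submatrix. Write $s \coloneqq \abs{S}$ and let $H_{S}$ be the $k^{\ast} \times s$ matrix formed by the columns in $S$. Since $V_{E} = R^{k^{\ast}}$ (the identity columns lie in $E$), Corollary~\ref{cor:rank_when_chain} gives $r_{M[H]}(E) = \mu_{R}(R^{k^{\ast}}) = k^{\ast}$, and likewise $s = \mu_{R}(V_{S}) \leq k^{\ast}$ by Theorem~\ref{thm:monotonic_iff_chain_ring}. So a modular independent superset of size $k^{\ast}$ is a basis, and the ``in particular'' clause follows once the first claim is proved.

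\emph{Step 1 (reduction to a row selection).} For a set $J$ of identity columns of size $k^{\ast}-s$, let $J$ also denote the corresponding row indices. After permuting rows, the $k^{\ast}\times k^{\ast}$ matrix $[H_{S} \mid I_{J}]$ has block form $\begin{pmatrix} A_{1} & 0 \\ A_{2} & I \end{pmatrix}$. Here $A_{1}$ is the $s\times s$ submatrix of $H_{S}$ on the rows outside $J$. A relation $A_{1}\bm{x}_{S}=\bm{0}$, $\bm{x}_{J}=-A_{2}\bm{x}_{S}$ has $\bm{x}_{J}\in\mathfrak{m}R^{J}$ whenever $\bm{x}_{S}\in\mathfrak{m}R^{S}$. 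By Remark~\ref{rem:modular_indep_with_syzygy}, the columns of $[H_{S}\mid I_{J}]$ are therefore modular independent if and only if the columns of $A_{1}$ are. It thus suffices to prove the following claim.

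\emph{Claim.} If a $k\times s$ matrix $N$ over $R$ has modular independent columns, then some $s$ of its rows form an $s\times s$ matrix with modular independent columns.

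\emph{Step 2 (induction on $s$).} The case $s=0$ is trivial. If $N$ were zero and $s\geq1$, its columns would be dependent, so $N\ne 0$. Pick an entry $u\theta^{t}$, $u\in R^{\times}$, of minimal $\ord_{\theta}$, say at position $(i,j)$; then every entry of $N$ lies in $\langle\theta^{t}\rangle$. Using column operations, and row operations that only add multiples of row $i$ to other rows, clear row $i$ and column $j$, obtaining $\diag(u\theta^{t}, N')$.
\begin{itemize}
\item Column operations act by an invertible $Q$ and preserve $\mathfrak{m}R^{s}$, so they preserve modular independence of the columns, as in the proof of Lemma~\ref{lem:col_modular_indep_iff_row_modular_indep}.
\item Row operations preserve it by Lemma~\ref{lem:modular_independence_is_invariant_under_isomorphism}.
\item The kernel of $\diag(u\theta^{t},N')$ lies in $\mathfrak{m}R^{s}$ if and only if $\Ker N'\subseteq\mathfrak{m}R^{s-1}$, because $u\theta^{t}x=0$ forces $x\in\langle\theta^{\nu-t}\rangle\subseteq\mathfrak{m}$.
\end{itemize}
By induction, $N'$ has $s-1$ rows forming a matrix with modular independent columns. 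Take those rows of $N$ together with row $i$. This submatrix is transformed by the same operations, which are legitimate on it since row $i$ is included, into $\diag(u\theta^{t}, N'_{\mathrm{sel}})$, which has modular independent columns.

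The main obstacle is Step 2: keeping the row operations compatible with passing to a row subset. The choice of a minimal-valuation pivot, together with using only row operations that add multiples of the pivot row, is what makes this work. I would double-check that clearing the pivot row by column operations does not disturb the other rows beyond what the induction allows.
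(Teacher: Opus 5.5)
Your argument is correct, but it takes a different route from the paper's.

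\textbf{The paper's route.} It works module-theoretically and greedily. With $W = \langle H_{X}\rangle_{R}$ and $\abs{X} < k^{\ast}$, it shows that some $\langle \bm{e}_{i}\rangle$ meets $W$ trivially. Otherwise the nonzero cyclic modules $W \cap \langle \bm{e}_{i}\rangle$ would form a direct sum inside $W$ with $\mu_{R}$ equal to $k^{\ast}$, contradicting the monotonicity of $\mu_{R}$ from Theorem~\ref{thm:monotonic_iff_chain_ring}. It then uses the supermodularity in Lemma~\ref{lem:muR_supermodular} to show that adjoining that identity column raises $\mu_{R}$ by exactly one, and it iterates one column at a time.

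\textbf{Your route.} You first reduce, via the block-kernel computation that the paper itself uses in the proof of Theorem~\ref{thm:duality_for_free}, to a chain-ring analogue of ``a full-column-rank matrix has a nonsingular maximal minor''. You then prove that analogue by Smith-normal-form pivoting on an entry of minimal $\ord_{\theta}$. Restricting the row operations to adding multiples of the pivot row is exactly what makes them survive passage to row subsets, and it does.

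\textbf{Comparison.} Your approach gives an explicit, one-shot extension: the identity columns indexed by the complement of the selected rows. It also isolates a structural lemma of independent interest, and the bound $s \le k$ falls out of the induction without appealing to monotonicity of $\mu_{R}$. The paper's approach is shorter once the $\mu_{R}$ machinery of Section~\ref{sect:matroid_repr_by_modular_indep} is available. It also avoids the bookkeeping about which operations remain legitimate after selecting rows.

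\textbf{A detail to state explicitly.} You should say why the chosen identity columns $J$ are disjoint from $S$, so that $\abs{S \cup J} = k^{\ast}$. This does follow from your claim. If $S$ contained the identity column $\bm{e}_{i}$ with $i \in J$, its restriction to the selected rows would be a zero column of $A_{1}$. That contradicts the modular independence of the columns of $A_{1}$.
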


\begin{proof}
  Let $X \subseteq \lbrack n \rbrack$ be a modular independent set
  of columns of $H$, and set
  \begin{equation*}
    t \coloneqq \abs{X}
    \quad \text{and} \quad
    W \coloneqq \langle H_{X} \rangle_{R} \leq R^{k^{\ast}}.
  \end{equation*}
  By Lemma~\ref{lem:modular_indep_minimal_gen}, we have $\mu_{R}(W) = t$.
  If $t = k^{\ast}$, there is nothing to prove.
  Assume therefore that $t < k^{\ast}$.
  Let $\bm{e}_{1}, \dots, \bm{e}_{k^{\ast}}$ be the standard basis vectors
  of $R^{k^{\ast}}$, corresponding to the columns of the identity block in $H$.

  We claim that $W \cap \langle \bm{e}_{i} \rangle = 0$ for some $i \in \lbrack k^{\ast} \rbrack$.
  Suppose otherwise that $W \cap \langle \bm{e}_{i} \rangle \neq \{ \bm{0} \}$ for all $i \in \lbrack k^{\ast} \rbrack$.
  Set
  \begin{equation*}
    U_{i} \coloneqq W \cap \langle \bm{e}_{i} \rangle \leq \langle \bm{e}_{i} \rangle \cong R.
  \end{equation*}
  Since $R$ is a finite commutative chain ring, it is a PIR by Lemma~\ref{lem:noetherian_chain_iff_local_PIR}.
  Hence each nonzero $U_{i}$ is cyclic, and therefore $\mu_{R}(U_{i}) = 1$ for all $i \in \lbrack k^{\ast} \rbrack$.
  Moreover, because
  \begin{equation*}
    R^{k^{\ast}} = \bigoplus_{i = 1}^{k^{\ast}} \langle \bm{e}_{i} \rangle,
  \end{equation*}
  the sum $U_{1} + \dots + U_{k^{\ast}}$ is direct. Thus, by \eqref{eq:definition_of_muR},
  \begin{equation*}
    \mu_{R} \left( \bigoplus_{i = 1}^{k^{\ast}} U_{i} \right)
    = \sum_{i = 1}^{k^{\ast}} \mu_{R}(U_{i}) = k^{\ast}.
  \end{equation*}
  Since $\bigoplus_{i = 1}^{k^{\ast}} U_{i} \leq W$, Theorem~\ref{thm:monotonic_iff_chain_ring}
  yields
  \begin{equation*}
    k^{\ast} = \mu_{R} \left( \bigoplus_{i = 1}^{k^{\ast}} U_{i} \right)
    \leq \mu_{R}(W) = t,
  \end{equation*}
  contradicting $t < k^{\ast}$. This proves the claim.

  Choose $i \in \lbrack k^{\ast} \rbrack$ such that $W \cap \langle \bm{e}_{i} \rangle = \{ \bm{0} \}$,
  and let $j \in \lbrack n \rbrack$ be the index of the corresponding
  column of the identity block in $H$.
  Note that $j \notin X$ because $\bm{e}_{i} \notin W$.
  Applying Lemma~\ref{lem:muR_supermodular}
  to the submodules $W$ and $\langle \bm{e}_{i} \rangle$, we obtain
  \begin{equation*}
    \mu_{R}(W) + \mu_{R}(\langle \bm{e}_{i} \rangle)
    \leq \mu_{R}(W + \langle \bm{e}_{i} \rangle) + \mu_{R}(W \cap \langle \bm{e}_{i} \rangle).
  \end{equation*}
  Since $\mu_{R}(\langle \bm{e}_{i} \rangle) = 1$ and $W \cap \langle \bm{e}_{i} \rangle = \{ \bm{0} \}$,
  this gives
  \begin{equation*}
    t + 1 = \mu_{R}(W) + 1 \leq \mu_{R}(W + \langle \bm{e}_{i} \rangle).
  \end{equation*}
  On the other hand, $W + \langle \bm{e}_{i} \rangle = \langle H_{X \cup \{ j \}} \rangle_{R}$
  is generated by $t + 1$ columns, so $\mu_{R}(W + \langle \bm{e}_{i} \rangle) \leq t + 1$.
  Hence
  \begin{equation*}
    \mu_{R}(W + \langle \bm{e}_{i} \rangle) = t + 1.
  \end{equation*}
  By Lemma~\ref{lem:modular_indep_minimal_gen},
  the set $X \cup \{ j \}$ is modular independent.
  Repeating this argument, we obtain a modular independent superset
  $Y \supseteq X$ with $\abs{Y} = k^{\ast}$.

  Finally, the $k^{\ast}$ columns of the identity block are modular independent,
  so $r_{M\lbrack H \rbrack}(\lbrack n \rbrack) \geq k^{\ast}$.
  Conversely, if $Z \subseteq \lbrack n \rbrack$ is modular independent,
  then by Lemma~\ref{lem:modular_indep_minimal_gen} and Theorem~\ref{thm:monotonic_iff_chain_ring},
  \begin{equation*}
    \abs{Z} = \mu_{R}(\langle H_{Z} \rangle_{R}) \leq \mu_{R}(R^{k^{\ast}}) = k^{\ast}.
  \end{equation*}
  Therefore, $r_{M \lbrack H \rbrack}(\lbrack n \rbrack) = k^{\ast}$,
  and thus, every modular independent set of cardinality $k^{\ast}$
  is a basis of $M \lbrack H \rbrack$, and the final assertion follows.
\end{proof}

Remark~\ref{rem:duality_may_fail_even_if_free} also shows that
Lemma~\ref{lem:extend_indep_in_H} fails over a general local ring:
the matrix $H$ in Remark~\ref{rem:duality_may_fail_even_if_free}
has the form $\lbrack B \mid I_{2} \rbrack$,
yet $\{ 1, 2, 3 \}$ is modular independent.

\begin{thm} \label{thm:duality_for_free}
  If $C \leq R^{E}$ is a free $R$-code, then $M(C)^{\ast} = M(C^{\perp})$.
\end{thm}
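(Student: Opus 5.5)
We work in the standing setting of this section: $R$ is a finite commutative chain ring. By Proposition~\ref{prop:systematic_form_for_free_codes} we may permute coordinates and apply row operations so that $G=[I_k\mid A]$ and $H=[-A^\top\mid I_{n-k}]$, where $n=\abs{E}$. Row operations preserve $M(C)$ by Proposition~\ref{prop:modular_independence_is_invariant_under_elementary_row_operations}, and permuting coordinates just relabels both sides consistently. So it suffices to prove $M[G]^\ast=M[H]$.

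Both $G$ and $H$ have the shape $[B\mid I]$ up to column permutation. Lemma~\ref{lem:extend_indep_in_H} therefore shows that $M[G]$ and $M[H]$ are pure, with ranks $k$ and $n-k$. By Lemma~\ref{lem:Iast_via_bases}, the independent sets of $M[G]^\ast$ are the subsets of complements of bases of $M[G]$. Since $M[H]$ is pure, its independent sets are exactly the subsets of its bases. Hence the theorem reduces to the following claim: for $\abs{X}=k$, the set $X$ is a basis of $M[G]$ if and only if $E\setminus X$ is a basis of $M[H]$.

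To prove the claim, I write $X=X_1\cup X_2$ with $X_1\subseteq[k]$ (identity part of $G$) and $X_2$ in the last $n-k$ coordinates. Let $K=[k]\setminus X_1$ and $L=X_2$, so $\abs{K}=\abs{L}$. Expanding along the identity columns, $G_{X}$ is, after row and column permutations, block triangular with an identity block on $X_1$ and the square block $A_{K,L}$. Column modular independence is preserved by deleting or adjoining pivot identity columns together with their rows. I would check this directly via Remark~\ref{rem:modular_indep_with_syzygy}: a kernel vector's coordinates on the pivot columns are determined by the others, so a unit coordinate forces a unit coordinate among the others. It follows that $G_X$ has modular independent columns if and only if $A_{K,L}$ does.

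Similarly, $E\setminus X$ consists of the columns $K$ in the $-A^\top$ part together with the identity columns of $H$ indexed outside $L$. Stripping the identity columns leaves the square block $-(A^\top)_{L,K}=-(A_{K,L})^\top$. So $E\setminus X$ is modular independent in $H$ if and only if the columns of $(A_{K,L})^\top$ are modular independent. Lemma~\ref{lem:col_modular_indep_iff_row_modular_indep} now gives the equivalence, and the claim follows.

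The main obstacle is making the reduction from $G_X$ to the square block rigorous. The identity pivots have their entries in rows that $A_{K,L}$ also touches, so I would handle it by row operations on the selected columns: clear the $A$-entries in the pivot rows using the identity columns of $X_1$. Those rows are then used only by the pivots, which makes the matrix block-diagonal up to permutation. Modular independence is invariant under these operations by Lemma~\ref{lem:modular_independence_is_invariant_under_isomorphism}, and for a block-diagonal matrix the kernel splits.

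A secondary point is to confirm that bases have cardinality exactly $k$ and $n-k$. This uses the rank computation at the end of Lemma~\ref{lem:extend_indep_in_H}.
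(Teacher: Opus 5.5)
Your proposal is correct and follows essentially the paper's proof: systematic form, reduction of both $G_X$ and the complementary columns of $H$ to the square block $A_{K,L}$ and its transpose via the kernel computation of Remark~\ref{rem:modular_indep_with_syzygy}, then Lemma~\ref{lem:col_modular_indep_iff_row_modular_indep}, with purity of $M[H]$ from Lemma~\ref{lem:extend_indep_in_H} to pass from bases to independent sets (the paper finishes via rank functions and Proposition~\ref{prop:dual_is_involutive_iff_pure}, and does not need purity of $M[G]$). One minor slip: clearing $A_{X_1,L}$ with the identity columns is a column operation, so it is justified by Lemma~\ref{lem:modular_indep_minimal_gen} (the span is unchanged), not by Lemma~\ref{lem:modular_independence_is_invariant_under_isomorphism}; your direct kernel argument already makes that step unnecessary.
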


\begin{proof}
  Since a simultaneous permutation of the coordinates merely relabels
  the ground set on both sides, we may identify $E$ with $\lbrack n \rbrack$
  and arrange that $C$ has a systematic generator matrix
  \begin{equation*}
    G = \lbrack I_{k} \mid A \rbrack,
  \end{equation*}
  where $k = \mu_{R}(C)$ and $n = \abs{E}$. 
  Then $H = \lbrack -A^{\top} \mid I_{n - k} \rbrack$
  is a parity-check matrix for $C$, so $M(C^{\perp}) = M\lbrack H \rbrack$.

  We first compare the bases of the two independence systems.
  Let $B \subseteq \lbrack n \rbrack$ with $\abs{B} = k$.
  Write
  \begin{equation*}
    B = P \sqcup (k + T), \quad \text{with }
    P \subseteq \lbrack k \rbrack \text{ and }
    T \subseteq \lbrack n - k \rbrack,
  \end{equation*}
  and set $S \coloneqq \lbrack k \rbrack \setminus P$.
  Then $\abs{S} = \abs{T}$.
  After suitable permutations of rows and columns,
  the submatrix $G_{B}$ is equivalent to
  \begin{equation*}
    \begin{pmatrix}
      I_{\abs{P}} & U \\
      0 & A_{S, T}
    \end{pmatrix}
  \end{equation*}
  for some matrix $U$. Then
  \begin{equation*}
    \Ker
      \begin{pmatrix}
      I_{\abs{P}} & U\\
      0 & A_{S,T}
      \end{pmatrix}
    =
    \left\{
      \begin{pmatrix} -Uy \\ y \end{pmatrix}
      \mathrel{}\middle|\mathrel{}
      y\in \Ker A_{S,T}
    \right\}.
  \end{equation*}
  In particular,
  \begin{equation*}
    \begin{pmatrix} -Uy \\ y \end{pmatrix} \in \mathfrak{m}^{\abs{P} + \abs{T}}
    \iff
    y\in \mathfrak{m}^{\abs{T}},
  \end{equation*}
  so Remark~\ref{rem:modular_indep_with_syzygy} shows that
  the columns of this block matrix are modular independent if and
  only if the columns of $A_{S,T}$ are modular independent.
  Hence $B$ is a basis of $M(C)$ if and only if the columns of
  the square matrix $A_{S, T}$ are modular independent.
  Likewise, the complement is
  \begin{equation*}
    \lbrack n \rbrack \setminus B = S \sqcup (k + (\lbrack n - k \rbrack \setminus T)),
  \end{equation*}
  and,  after suitable permutations of rows and columns,
  the corresponding submatrix of $H$ is equivalent to
  \begin{equation*}
    \begin{pmatrix}
      I_{(n-k) - \abs{T}} & U^{\prime}\\
      0 & -(A_{S,T})^{\top}
    \end{pmatrix}
  \end{equation*}
  for some matrix $U^{\prime}$, and the same kernel calculation
  shows that its columns are modular independent if and only if
  the columns of $(A_{S,T})^{\top}$ are modular independent.
  By Lemma~\ref{lem:col_modular_indep_iff_row_modular_indep},
  this is equivalent to modular independence of the columns of $A_{S, T}$.
  Thus,
  \begin{equation} \label{eq:basis_iff_complement_is_cobasis}
    B \in \mathcal{B}(M(C)) \iff \lbrack n \rbrack \setminus B \in \mathcal{B}(M(C^{\perp})).
  \end{equation}
  Now Lemma~\ref{lem:extend_indep_in_H} applied to $H = \lbrack -A^{\top} \mid I_{n - k} \rbrack$
  shows that every independent set of $M(C^{\perp}) = M\lbrack H \rbrack$
  is contained in a basis. Hence Proposition~\ref{prop:dual_is_involutive_iff_pure}
  also gives
  \begin{equation*}
    r_{M(C)^{\ast}}(X) = \max_{B \in \mathcal{B}(M(C))} \abs{X \setminus B}.
  \end{equation*}
  Therefore, by Lemma~\ref{lem:extend_indep_in_H} and Proposition~\ref{prop:dual_is_involutive_iff_pure}(2),
  \begin{align*}
    r_{M(C^{\perp})}(X)
    &= \max_{B^{\ast} \in \mathcal{B}(M(C^{\perp}))} \abs{X \cap B^{\ast}}
    = \max_{B \in \mathcal{B}(M(C))} \abs{X \cap (\lbrack n \rbrack \setminus B)} \\
    &= \max_{B \in \mathcal{B}(M(C))} \abs{X \setminus B}
    = r_{M(C)^{\ast}}(X).
  \end{align*}
  Hence $M(C^{\perp}) = M(C)^{\ast}$.
\end{proof} 
\section{Matroid Representation over Finite Chain Rings} \label{sect:application}

In this last section, we assume that $R$ is a finite commutative chain ring
with maximal ideal $\mathfrak{m} = \langle \theta \rangle$,
residue field $R/\mathfrak{m} = \mathbb{F}_{q}$, and
nilpotency index $\nu$.
We first define the terminology on representation.
As we have seen in Section~\ref{sect:matroid_repr_codes},
it would be convenient to represent a matroid by using a free $R$-code
because the duality behaves well in the free case.

\begin{dfn} \label{dfn:representability}
  Let $M$ be a matroid, $E$ be a finite set of cardinality $\abs{E(M)}$,
  and $V$ be an $R$-module.
  If $M$ is isomorphic to $M \lbrack A \rbrack$ for $A \in V^{E}$,
  we say $M$ is \emph{representable over $R$} or \emph{$R$-representable};
  and $A$ is a \emph{representation} for $M$ \emph{over} $R$
  or an \emph{$R$-representation} for $M$.
  Furthermore, if $V = R^{k}$ and, after permutations of columns,
  $A$ has the systematic form $\lbrack I_{k} \mid P \rbrack$,
  we say $M$ is \emph{freely representable over $R$} or
  \emph{freely $R$-representable};
  and $A$ is a \emph{free representation}
  or a \emph{free $R$-representation} for $M$.
\end{dfn}

We study the size constraints on representations of simple matroids.
For an $R$-module $V$, define
\begin{equation*}
  \Cyc(V) \coloneqq
  \{ \langle \bm{v} \rangle \mid \bm{v} \in V \}
\end{equation*}
partially ordered by inclusion.
Recall that the width of a partially ordered set $P$ is defined
as the supremum of the size of antichains (or clutters) in $P$:
\begin{equation*}
  \width(P) \coloneqq \sup \{ \abs{A} : A \subseteq P \text{ is an antichain} \}.
\end{equation*}

\begin{lem} \label{lem:size-constraint_simple}
  Let $V$ be an $R$-module,
  and $M$ be a simple matroid represented by $A \in V^{E}$.
  The map $E \to \Cyc(V), \, e \mapsto \langle A(e) \rangle$
  embeds $E$ into an antichain of $\Cyc(V)$.
  In particular, 
  \begin{equation*}
    \abs{E(M)} \leq \width(\Cyc(V)).
  \end{equation*}
\end{lem}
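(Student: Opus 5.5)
The map $e \mapsto \langle A(e) \rangle$ must be injective and have antichain image. Both facts follow from the simplicity of $M$ (no loops, no parallel pairs) together with the two-vector criterion in Corollary~\ref{cor:two_vectors_mod_dep}. The width bound then follows immediately from the definition of $\width$.

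First, no $A(e)$ is zero. Since $M$ has no loops, $\{e\}$ is independent. Hence the single vector $A(e)$ is modular independent: $\alpha A(e) = \bm{0}$ forces $\alpha \in \mathfrak{m}$. Taking $\alpha = 1$ gives $A(e) \neq \bm{0}$. This nonvanishing is exactly what we need in order to apply Corollary~\ref{cor:two_vectors_mod_dep}, which is stated for non-zero vectors.

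Next, take distinct $e, f \in E$. Since $M$ is simple, $\{e,f\}$ is not a circuit. As $\{e\}$ and $\{f\}$ are both independent, this means $\{e,f\}$ is independent in $M$, so $A(e), A(f)$ are modular independent. By Corollary~\ref{cor:two_vectors_mod_dep}, this says precisely that
\[
A(e) \notin \langle A(f) \rangle \quad\text{and}\quad A(f) \notin \langle A(e) \rangle .
\]
These are equivalent to
\[
\langle A(e) \rangle \not\subseteq \langle A(f) \rangle \quad\text{and}\quad \langle A(f) \rangle \not\subseteq \langle A(e) \rangle .
\]
So the two cyclic submodules are incomparable in $\Cyc(V)$. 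In particular they are distinct, which gives injectivity. Hence the image of $E$ is an antichain of cardinality $\abs{E} = \abs{E(M)}$. Since $\width$ is the supremum of antichain sizes, $\abs{E(M)} \le \width(\Cyc(V))$.

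There is essentially no obstacle. The only care needed is the translation between the matroid $M$ and $M[A]$: we use that $M \cong M[A]$ after identifying ground sets, so that loops and parallel pairs correspond. We also need the elementary equivalence $\bm{v} \in \langle \bm{w} \rangle \iff \langle \bm{v} \rangle \subseteq \langle \bm{w} \rangle$.
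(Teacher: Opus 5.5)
Your proposal is correct and takes essentially the same route as the paper: the absence of loops gives $A(e)\neq\bm{0}$, and Corollary~\ref{cor:two_vectors_mod_dep} applied to each modular independent pair shows that the cyclic submodules $\langle A(e)\rangle$ are pairwise incomparable, hence distinct. Your version is slightly more careful in two places: you justify that $\{e,f\}$ is independent because it is not a circuit while both singletons are, and you obtain injectivity directly from incomparability rather than treating the equality case separately as the paper does.
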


\begin{proof}
  Write $\bm{v}_{e} \coloneqq A(e)$ for each $e \in E$.
  Since $M$ is simple, it has no loops.
  Therefore $\bm{v}_{e} \neq \bm{0}$ for all $e \in E$.
  Also, $M$ has no parallel elements, so for distinct $e, f \in E$,
  the pair $\{ \bm{v}_{e}, \bm{v}_{f} \}$ is modular independent.
  By Corollary~\ref{cor:two_vectors_mod_dep},
  neither $\bm{v}_{e} \in \langle \bm{v}_{f} \rangle$ nor
  $\bm{v}_{f} \in \langle \bm{v}_{e} \rangle$ can occur.
  Hence $\langle \bm{v}_{e} \rangle$ and $\langle \bm{v}_{f} \rangle$
  are incomparable in $\Cyc(V)$.

  If $\langle \bm{v}_{e} \rangle = \langle \bm{v}_{f} \rangle$,
  then it contradicts modular independence of $\{ \bm{v}_{e}, \bm{v}_{f} \}$.
  Thus, $e \mapsto \langle \bm{v}_{e} \rangle$ is injective,
  and its image is an antichain; hence, $\abs{E(M)} \leq \width(\Cyc(V))$.
\end{proof}

Next, we study the representation of uniform matroids.

\begin{prop} \label{prop:uniform_matroid_freely_representable}
  The uniform matroid $U_{k, n}$ is freely $R$-representable
  if it is $R$-representable.
\end{prop}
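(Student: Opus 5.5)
Suppose $U_{k,n}$ is represented by $A \in V^{E}$ for some $R$-module $V$ and $\abs{E}=n$. We dispose of the degenerate cases first. If $k=0$, every column is a loop and the zero matrix works; we accept the empty systematic form $\lbrack I_{0} \mid P\rbrack$ as a convention. If $k=n$, the identity $I_{n}$ gives $U_{n,n}$, and this is free. So assume $1\le k<n$.

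Pick a basis $B$ of $U_{k,n}$; any $k$-subset will do. The vectors $\bm{v}_{b}\coloneqq A(b)$, $b\in B$, are modular independent. Set $W\coloneqq V_{E}=\langle A(E)\rangle$. By Proposition~\ref{prop:rank_function} and Corollary~\ref{cor:rank_when_chain}, $\mu_{R}(W)=r(E)=k$. By Lemma~\ref{lem:modular_indep_minimal_gen}, $\mu_{R}(V_{B})=k=\mu_{R}(W)$, so $\{\bm{v}_{b}\}_{b\in B}$ is a minimal generating set of $W$ by Theorem~\ref{thm:number_of_minimal_generators}. It is tempting to change coordinates so that $\bm{v}_{b}$ become standard basis vectors, but $W$ need not be free, so this does not work directly.

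Instead we construct a free representation and compare circuits. Let $F\coloneqq R^{B}$ and consider the surjection $\varphi\colon F\to W$ with $\bm{e}_{b}\mapsto \bm{v}_{b}$. For each $e\notin B$, choose a lift $\bm{p}_{e}\in F$ with $\varphi(\bm{p}_{e})=A(e)$. Define $A'\colon E\to F$ by $A'(b)=\bm{e}_{b}$ and $A'(e)=\bm{p}_{e}$. Then $A'=\lbrack I_{k}\mid P\rbrack$ is systematic. It remains to show that $M\lbrack A'\rbrack=U_{k,n}$.

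For any $X\subseteq E$, modular independence passes from $A'_{X}$ to $A_{X}$ in one direction only. If $A_{X}$ is modular independent, then any relation among the $A'(x)$ maps under $\varphi$ to a relation among the $A(x)$, so its coefficients lie in $\mathfrak{m}$; hence $A'_{X}$ is modular independent. Therefore every $k$-subset is independent in $M\lbrack A'\rbrack$.

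For the converse, every $(k+1)$-subset must be dependent. This follows from Theorem~\ref{thm:monotonic_iff_chain_ring}: $\mu_{R}(\langle A'(X)\rangle)\le\mu_{R}(F)=k<\abs{X}$, so Lemma~\ref{lem:modular_indep_minimal_gen} gives dependence. Hence $M\lbrack A'\rbrack$ has independent sets exactly the sets of size at most $k$, that is, $M\lbrack A'\rbrack=U_{k,n}$, and it is freely represented.

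The main obstacle should be the case analysis at $k=0$ or when $A$ has zero columns. For $k\ge1$, $U_{k,n}$ has no loops, so every column of $A$ is nonzero and no issue arises there. The only genuine point is to make sure the map-back argument is stated correctly in the independence direction, and, as checked above, it is.
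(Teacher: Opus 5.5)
\paragraph{Gap: the columns of an arbitrary basis need not generate $W$.}
The error is in the step where you conclude that $\{\bm{v}_{b}\}_{b\in B}$ is a minimal generating set of $W$. The equality $\mu_{R}(V_{B}) = k = \mu_{R}(W)$ does not give $V_{B} = W$. Theorem~\ref{thm:number_of_minimal_generators} only describes sets already known to generate $W$. Over a chain ring, a proper submodule can have the same minimal number of generators as the module containing it. The paper notes exactly this: $\langle (1,0),(1,2)\rangle = \mathbb{Z}_{4}\oplus 2\mathbb{Z}_{4} \subsetneq \mathbb{Z}_{4}^{2}$.

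So for an arbitrary basis $B$, your map $\varphi\colon F\to W$ need not be surjective, and the lifts $\bm{p}_{e}$ need not exist. A concrete failure: over $\mathbb{Z}_{4}$, the matrix $A = \begin{pmatrix} 2 & 1 \end{pmatrix}$ represents $U_{1,2}$.
\begin{itemize}
\item Each single column is modular independent, since $2\alpha = 0$ forces $\alpha \in \mathfrak{m}$.
\item The pair is dependent, since $1\cdot 2 + 2\cdot 1 = 0$ with a unit coefficient.
\end{itemize}
Taking $B=\{1\}$, which ``any $k$-subset will do'' permits, gives $\varphi(R) = \langle 2\rangle$, and $A(2)=1$ has no preimage. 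The same failure occurs for $U_{2,3}$ with columns $(1,0)^{\top},(1,2)^{\top},(0,1)^{\top}$ and $B$ the first two columns.

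\paragraph{The fix.}
Do not choose $B$ arbitrarily. Take $B$ to be a minimal subset of $E$ whose columns generate $W$. Such a subset exists because $E$ is finite and $A(E)$ generates $W$. By Theorem~\ref{thm:number_of_minimal_generators}(ii) it has exactly $\mu_{R}(W)=k$ elements. With this choice $\varphi$ is surjective and the lifts exist.

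Your two remaining steps then go through unchanged:
\begin{itemize}
\item For sets of size at most $k$, push a dependence relation among the columns of $A'$ through $\varphi$ to get one among the columns of $A$.
\item For $(k+1)$-sets, use $\mu_{R}(R^{k})=k$ together with Theorem~\ref{thm:monotonic_iff_chain_ring}.
\end{itemize}
With this choice of $B$, your argument is exactly the paper's proof.
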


\begin{proof}
  Let $A \colon V^{E}$ be a (not necessarily free) representation of $U_{k, n}$.
  Since $R$ is a chain ring, Corollary~\ref{cor:rank_when_chain} gives
  \begin{equation*}
    \mu_{R}(V_{E}) = r_{M \lbrack A \rbrack}(E) = k.
  \end{equation*}
  Choose a minimal subset $B = \{ b_{1}, \dots, b_{k} \} \subseteq E$
  such that $A(B)$ generates $V_{E}$.
  By Theorem~\ref{thm:number_of_minimal_generators},
  $\abs{B} = \mu_{R}(V_{E}) = k$.
  Define a surjection
  \begin{equation*}
    \varphi \colon R^{k} \twoheadrightarrow V_{E}, \;
    \bm{e}_{i} \mapsto A(b_{i}) \quad (i \in \lbrack k \rbrack),
  \end{equation*}
  where $\bm{e}_{1}, \dots, \bm{e}_{k}$ are the standard basis vectors of $R^{k}$.
  For each $e \in E$, choose $\bm{g}_{e} \in R^{k}$ with
  $\varphi(\bm{g}_{e}) = A(e)$, and set $G \coloneqq (\bm{g}_{e})_{e \in E} \in (R^{k})^{E}$.
  After relabelling $E$, we may assume $B = \lbrack k \rbrack$, so that 
  $\bm{g}_{b_{i}} = \bm{e}_{i}$ for $i \in \lbrack k \rbrack$.
  Hence, $G = \lbrack I_{k} \mid P \rbrack$ for some matrix $P$,
  and therefore the row span $C \leq R^{E}$ of $G$ is a free $R$-code
  of rank $k$. In particular, Proposition~\ref{prop:systematic_form_for_free_codes}
  applies to $C$.

  We claim that $M(C) = M\lbrack G \rbrack \cong U_{k, n}$.
  Let $I \subseteq E$ with $\abs{I} \leq k$.
  If the columns of $G_{I}$ were modular dependent,
  then applying $\varphi$ to a dependence relation would give
  a modular dependence among the columns of $A_{I}$,
  contradicting the fact that every subset of size at most $k$
  is independent in $M\lbrack A \rbrack \cong U_{k, n}$.
  Thus every subset of at most $k$ columns of $G$ is modular independent.

  Conversely, if $D \subseteq E$ satisfies $\abs{D} > k$, then
  \begin{equation*}
    \mu_{R}(\langle G(D) \rangle_{R}) \leq \mu_{R}(R^{k}) = k
  \end{equation*}
  by Theorem~\ref{thm:monotonic_iff_chain_ring}.
  Hence Lemma~\ref{lem:modular_indep_minimal_gen} implies that
  the columns of $G_{D}$ are modular dependent.
  Therefore $M(C) = M \lbrack G \rbrack \cong U_{k, n}$.
\end{proof}

\begin{lem} \label{lem:free_representation_size_constraint}
  Let $C \leq R^{E}$ be a free $R$-code with $\mu_{R}(C) = k$.
  If $M(C)$ is a simple matroid, then
  \begin{equation*}
    \abs{E} \leq
    \frac{q^{\nu k} - q^{(\nu-1) k}}{q^{\nu} - q^{\nu-1}}
    =
    q^{(\nu - 1)(k - 1)}\frac{q^{k} - 1}{q - 1}.
  \end{equation*}
\end{lem}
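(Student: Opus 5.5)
The plan is to bound $\abs{E}$ by injecting $E$ into the set of cyclic submodules $\langle \bm{w} \rangle \leq R^{k}$ generated by primitive vectors $\bm{w} \in R^{k} \setminus \theta R^{k}$. Corollary~\ref{cor:counting_formula} says this set has exactly $q^{(\nu-1)(k-1)}\frac{q^{k}-1}{q-1}$ elements, so an injection gives the stated bound.

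First I fix the setting. Since $C$ is free with $\mu_{R}(C) = k$, Proposition~\ref{prop:systematic_form_for_free_codes} gives a generator matrix $G \in (R^{k})^{E}$. By Proposition~\ref{prop:modular_independence_is_invariant_under_elementary_row_operations}, $M(C) = M\lbrack G \rbrack$, so the matroid is represented by the columns $\bm{v}_{e} \coloneqq G(e) \in R^{k}$. Because $M(C)$ is simple, it has no loops, so every $\bm{v}_{e}$ is non-zero. It also has no parallel pairs, so Corollary~\ref{cor:two_vectors_mod_dep} gives, for distinct $e, f$, that $\bm{v}_{e} \notin \langle \bm{v}_{f} \rangle$ and $\bm{v}_{f} \notin \langle \bm{v}_{e} \rangle$. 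This is exactly the antichain property from Lemma~\ref{lem:size-constraint_simple}.

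Next, for each $e$ I extract a primitive direction. Let $s_{e} \coloneqq \min_{j} \ord_{\theta}((\bm{v}_{e})_{j}) \leq \nu - 1$, which is well defined because $\bm{v}_{e} \neq \bm{0}$. Using the valuation description of elements in a finite chain ring, each entry of $\bm{v}_{e}$ can be written as $\theta^{s_{e}}$ times an element of $R$. At a coordinate attaining the minimum, the corresponding factor is a unit. Hence $\bm{v}_{e} = \theta^{s_{e}} \bm{w}_{e}$ for some primitive $\bm{w}_{e} \in R^{k}$. I fix one such choice of $\bm{w}_{e}$ and define
\begin{equation*}
  \Phi \colon E \to \{ \langle \bm{w} \rangle \leq R^{k} : \bm{w} \notin \theta R^{k} \}, \qquad \Phi(e) \coloneqq \langle \bm{w}_{e} \rangle.
\end{equation*}

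The key step is injectivity of $\Phi$. Suppose $\langle \bm{w}_{e} \rangle = \langle \bm{w}_{f} \rangle$ with $e \neq f$. Both vectors are primitive generators of the same cyclic module, so $\bm{w}_{f} = u \bm{w}_{e}$ for a unit $u$. This uses the fact, noted in the proof of Corollary~\ref{cor:counting_formula}, that primitive generators of the same cyclic submodule differ by a unit. Without loss of generality $s_{e} \leq s_{f}$. Then
\begin{equation*}
  \bm{v}_{f} = \theta^{s_{f}} u \bm{w}_{e} = u\theta^{s_{f}-s_{e}} \bm{v}_{e} \in \langle \bm{v}_{e} \rangle,
\end{equation*}
which contradicts the antichain property above. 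Hence $\Phi$ is injective, and Corollary~\ref{cor:counting_formula} yields the displayed inequality.

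I expect the main obstacle to be justifying that two primitive generators of one cyclic submodule differ by a unit. The argument is as follows. If $\bm{w}' = a\bm{w}$ and $\bm{w} = b\bm{w}'$, then $(1-ab)\bm{w} = \bm{0}$. Since $\bm{w}$ is primitive, some coordinate of $\bm{w}$ is a unit, so $\Ann_{R}(\bm{w}) = 0$ and therefore $ab = 1$. It is also worth noting that freeness enters only to place the columns in $R^{k}$ with the same $k$ as $\mu_{R}(C)$.
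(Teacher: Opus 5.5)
Your proposal is correct and follows the paper's proof essentially step for step: factor each non-zero column as $\theta^{m_{e}}$ times a primitive vector, use the unit-multiple argument and Corollary~\ref{cor:two_vectors_mod_dep} to show the resulting primitive cyclic submodules are pairwise distinct, and conclude with Corollary~\ref{cor:counting_formula}. One small correction to your closing remark: freeness is not needed even to obtain $k$ rows, since any generator matrix of $C$ has exactly $\mu_{R}(C)=k$ rows by Theorem~\ref{thm:number_of_minimal_generators}.
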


\begin{proof}
  Let $G = (\bm{g}_{e})_{e\in E} \in (R^{k})^{E}$ be
  a generator matrix for $C$.
  Since $M(C)$ is simple, it has no loops, and hence $\bm{g}_{e} \neq 0$
  for every $e\in E$. For each $e\in E$,
  let $m_{e}$ be the largest integer such that $\bm{g}_{e}\in \theta^{m_{e}}R^{k}$,
  and write $\bm{g}_{e} = \theta^{m_{e}} \bm{u}_{e}$
  with $\bm{u}_{e} \notin \theta R^{k}$. Then $\bm{u}_{e}$ is primitive.

  We claim that the cyclic submodules $\langle \bm{u}_{e} \rangle$
  are pairwise distinct.
  Suppose $\langle \bm{u}_{e} \rangle = \langle \bm{u}_{f} \rangle$
  for some distinct $e, f \in E$.
  Then $\bm{u}_{f} = \varepsilon \bm{u}_{e}$ for some unit $\varepsilon \in R^\times$.
  Since $R$ is a chain ring, the ideals $\langle \theta^{m_{e}}\rangle$
  and $\langle \theta^{m_{f}}\rangle$ are comparable;
  without loss of generality, assume $m_{e} \leq m_{f}$. Then
  \begin{equation*}
    \bm{g}_{f} = \theta^{m_{f}}\bm{u}_{f}
    = \varepsilon \theta^{m_{f} - m_{e}} \bm{g}_{e}
    \in \langle \bm{g}_{e} \rangle.
  \end{equation*}
  Hence $\{ \bm{g}_{e}, \bm{g}_{f} \}$ is modular dependent
  by Corollary~\ref{cor:two_vectors_mod_dep}.
  This means that $\{ e, f \}$ is dependent in $M(C)$,
  contradicting simplicity.

  Therefore the submodules $\langle \bm{u}_{e} \rangle$ are
  pairwise distinct. By Corollary~\ref{cor:counting_formula},
  the number of cyclic submodules of $R^{k}$ generated
  by primitive vectors is
  \begin{equation*}
    \frac{q^{\nu k}-q^{(\nu - 1)k}}{q^{\nu} - q^{\nu-1}}
    =
    q^{(\nu - 1)(k - 1)}\frac{q^{k} - 1}{q - 1}.
  \end{equation*}
  Hence the desired inequality follows.
\end{proof}

The following theorem shows that representability over a finite chain ring
can be substantially richer than representability over a field
of the same cardinality.

\begin{thm} \label{thm:excluded_minor_uniform}
  The uniform matroid $U_{2, n}$ is representable over $R$
  if and only if $n \leq q^{\nu} + q^{\nu - 1}$.
  In particular, neither $U_{2,q^{\nu} + q^{\nu - 1} + 1}$ nor
  $U_{q^{\nu} + q^{\nu - 1} - 1, q^{\nu} + q^{\nu - 1} + 1}$
  is representable over $R$.
\end{thm}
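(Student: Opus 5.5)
The plan is to prove both directions by reducing to cyclic submodules of $R^{2}$ generated by primitive vectors, and then to get the two ``in particular'' claims from the first one together with duality for free codes.

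\emph{Necessity.} Suppose $U_{2,n}$ is $R$-representable with $n \geq 2$; the cases $n \leq 1$ are trivial, since $U_{2,0}$ and $U_{2,1}$ are represented by the empty matrix and by a single column $(1,0)^{\top}$. By Proposition~\ref{prop:uniform_matroid_freely_representable} there is a free code $C \leq R^{E}$ with $\mu_{R}(C)=2$ and $M(C)\cong U_{2,n}$. Because $n \geq 2$, this matroid is simple: there are no loops, and every pair is independent. Lemma~\ref{lem:free_representation_size_constraint} with $k=2$ then gives
\[
n \leq q^{\nu-1}(q+1) = q^{\nu}+q^{\nu-1}.
\]

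\emph{Sufficiency.} Let $\bm{u}_{1},\dots,\bm{u}_{N}$ be one primitive generator for each of the $N=q^{\nu}+q^{\nu-1}$ distinct cyclic submodules $\langle \bm{u}\rangle \leq R^{2}$ with $\bm{u}$ primitive, as counted in Corollary~\ref{cor:counting_formula}. I claim the matrix with these columns represents $U_{2,N}$, so any $n \leq N$ is obtained by deleting columns (Lemma~\ref{lem:deletion_is_puncturing}).
\begin{itemize}
\item Singletons are independent, because a primitive vector has a unit coordinate and so lies in $\psi(R^{2})$.
\item Any three columns are dependent, by Lemma~\ref{lem:modular_indep_minimal_gen} and Theorem~\ref{thm:monotonic_iff_chain_ring}, since $\mu_{R}\leq \mu_{R}(R^{2})=2$.
\item For pairs, suppose $\bm{u}_{i} \in \langle \bm{u}_{j}\rangle$, say $\bm{u}_{i}=\lambda\bm{u}_{j}$. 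If $\lambda\in\mathfrak{m}$, then $\bm{u}_{i}\in\mathfrak{m}R^{2}$, contradicting primitivity. So $\lambda$ is a unit and $\langle\bm{u}_{i}\rangle=\langle\bm{u}_{j}\rangle$, forcing $i=j$.
\end{itemize}
Hence every pair is modular independent by Corollary~\ref{cor:two_vectors_mod_dep}. Alternatively, one can check pairs directly: after a unit change of coordinates we may take $\bm{u}_{j}=(1,0)^{\top}$, and then $\bm{u}_{i}$ has nonzero second coordinate, so the $2\times 2$ minor is nonzero and Proposition~\ref{prop:nonzero_minor_implies_modular_indep} applies.

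\emph{In particular.} $U_{2,N+1}$ is not representable by the first part. For $U_{N-1,N+1}=U_{2,N+1}^{\ast}$, I argue by contradiction. If it were representable, Proposition~\ref{prop:uniform_matroid_freely_representable} gives a free code $C$ with $M(C)\cong U_{N-1,N+1}$. Then $C^{\perp}$ is free by Proposition~\ref{prop:systematic_form_for_free_codes}, and Theorem~\ref{thm:duality_for_free} gives $M(C^{\perp})=M(C)^{\ast}\cong U_{2,N+1}$, which is impossible. Here the dual is the ordinary matroid dual, by Remark~\ref{rem:difference_of_dual}.

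The main step to verify carefully is the sufficiency pair-independence argument, in particular that distinct cyclic submodules generated by primitive vectors are incomparable and not merely distinct. The primitivity argument in the third bullet settles this. The remaining steps are direct citations of earlier results.
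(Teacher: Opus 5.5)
Your proof is correct. The necessity direction and the deduction for $U_{N-1,N+1}$, via Proposition~\ref{prop:uniform_matroid_freely_representable} and Theorem~\ref{thm:duality_for_free}, coincide with the paper's. The difference lies in sufficiency.

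\textbf{The paper's route.} It writes down the explicit transversal
\[
\mathbb{P}^{1}(R)=\{(1,r)^{\top} : r\in R\}\cup\{(l,1)^{\top} : l\in\mathfrak{m}\}
\]
and counts it directly as $\abs{R}+\abs{\mathfrak{m}}$. It then checks pairwise independence by computing $2\times 2$ determinants ($r'-r\neq 0$, $l-l'\neq 0$, $1-lr\in R^{\times}$) and invoking Proposition~\ref{prop:nonzero_minor_implies_modular_indep}.

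\textbf{Your route.} You take an arbitrary transversal of the cyclic submodules of $R^{2}$ generated by primitive vectors, and get the count from Corollary~\ref{cor:counting_formula}. You get pairwise independence from Corollary~\ref{cor:two_vectors_mod_dep}, together with the observation that containment between cyclic submodules generated by primitive vectors forces equality.

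\textbf{Comparison.} Your argument is essentially the converse of the one in Lemma~\ref{lem:free_representation_size_constraint}. It shows the bound is attained by a transversal of exactly the objects counted there, which makes the two directions visibly match and avoids the case analysis. The paper's version instead produces concrete matrices, such as those in Example~\ref{eg:representation_of_uniform_matroids_of_rank2}, using only the elementary determinant criterion.

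Your ``alternative'' check via a change of coordinates is not independent of your third bullet. The nonvanishing of the second coordinate is precisely the statement $\bm{u}_{i}\notin\langle\bm{u}_{j}\rangle$. It is harmless, though, since the main argument already suffices.
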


\begin{proof}
  Suppose that $U_{2,n}$ is representable over $R$.
  By Proposition~\ref{prop:uniform_matroid_freely_representable},
  there exists a free $R$-code $C \leq R^{E}$ with $\mu_{R}(C) = 2$
  such that
  \begin{equation*}
    M(C) \cong U_{2, n}.
  \end{equation*}
  Since $U_{2, n}$ is simple, Lemma~\ref{lem:free_representation_size_constraint}
  gives $n \leq q^{\nu} + q^{\nu - 1} \eqqcolon N$.

  Conversely, assume that $n \leq N$. Consider the set
  \begin{equation*}
    \mathbb{P}^{1}(R) \coloneqq
    \{ (1,r)^\top \mid r\in R \} \cup \{(l,1)^\top \mid l\in \mathfrak{m}\}
    \subseteq R^{2}
  \end{equation*}
  a standard set of representatives of the projective line over $R$ (see, e.g., \cite[Theorem~3.5.5]{Havlicek12}).
  Then $\abs{\mathbb{P}^{1}(R)} = \abs{R} + \abs{\mathfrak{m}} = q^{\nu} + q^{\nu - 1} = N$.
  Let $\bm{u}, \bm{v} \in \mathbb{P}^{1}(R)$ be distinct.
  \begin{itemize}[itemsep=0ex]
    \item If $\bm{u} = (1,r)^{\top}$ and $\bm{v} = (1, r^{\prime})^{\top}$,
      then $\det(\bm{u}, \bm{v}) = r^{\prime} - r \neq 0$.
    \item If $\bm{u} = (l,1)^{\top}$ and $\bm{v} = (l^{\prime}, 1)^{\top}$,
      then $\det(\bm{u}, \bm{v}) = l - l^{\prime} \neq 0$.
    \item If $\bm{u} = (1,r)^{\top}$ and $\bm{v} = (l,1)^{\top}$ with $l \in \mathfrak{m}$,
      then $\det(\bm{u}, \bm{v}) = 1 - lr \in R^{\times}$.
  \end{itemize}
  Thus every two-element subset of $\mathbb{P}^{1}(R)$ is
  modular independent by Proposition~\ref{prop:nonzero_minor_implies_modular_indep}.

  If $X \subseteq \mathbb{P}^{1}(R)$ is modular independent,
  then Lemma~\ref{lem:modular_indep_minimal_gen} and
  Theorem~\ref{thm:monotonic_iff_chain_ring} give
  \begin{equation*}
    \abs{X} = \mu_{R}(\langle X \rangle_{R}) \leq \mu_{R}(R^{2}) = 2.
  \end{equation*}
  Hence every three-element subset of $\mathbb{P}^{1}(R)$ is modular dependent.
  Therefore any $n$-element subset of $\mathbb{P}^{1}(R)$ represents
  $U_{2, n}$.

  The non-representability of $U_{2,N+1}$ is immediate from the first assertion.
  If $U_{N-1,N+1}$ were representable over $R$, then Proposition~\ref{prop:uniform_matroid_freely_representable}
  would yield a free $R$-code $C \leq R^{N+1}$ such that $M(C)\cong U_{N-1,N+1}$.
  By Theorem~\ref{thm:duality_for_free},
  $M(C^{\perp}) = M(C)^{\ast} \cong U_{2,N+1}$,
  contradicting the first assertion.
\end{proof}

Note that, if $R = \mathbb{F}_{q}$ is a finite field, then $\nu = 1$;
and so Theorem~\ref{thm:excluded_minor_uniform} reduces
to the classical fact that $U_{2,q+2}$ and $U_{q, q+2}$ are
not $\mathbb{F}_{q}$-representable.

\begin{eg} \label{eg:representation_of_uniform_matroids_of_rank2}
  \begin{equation*}
    \begin{pmatrix}
      1 & 0 & 1 & 1 & 1 & 2 \\
      0 & 1 & 1 & 2 & 3 & 1
    \end{pmatrix} \in \mathbb{Z}_{4}^{2 \times 6}
  \end{equation*}
  gives a representation of $U_{2, 6}$ over $\mathbb{Z}_{4}$;
  in particular, $U_{2, 6}$ is not $\mathbb{F}_{4}$-representable
  but $\mathbb{Z}_{4}$-representable.
  By Theorem~\ref{thm:excluded_minor_uniform}, 
  $U_{2, 7}$ and $U_{5, 7}$ are not $\mathbb{Z}_{4}$-representable.
  \begin{equation*}
    \left( \begin{array}{cccccccccccc}
      1 & 0 & 1 & 1 & 1 & 1 & 1 & 1 & 1 & 2 & 4 & 6 \\
      0 & 1 & 1 & 2 & 3 & 4 & 5 & 6 & 7 & 1 & 1 & 1
    \end{array} \right) \in \mathbb{Z}_{8}^{2 \times 12}
  \end{equation*}
  gives a representation of $U_{2, 12}$ over $\mathbb{Z}_{8}$;
  in particular, $U_{2, 12}$ is not $\mathbb{F}_{8}$-representable
  but $\mathbb{Z}_{8}$-representable.
  By Theorem~\ref{thm:excluded_minor_uniform}, 
  $U_{2, 13}$ and $U_{11, 13}$ are not $\mathbb{Z}_{8}$-representable.
\end{eg}

We now study the $\mathbb{Z}_{4}$-representability,
comparing with the $\mathbb{F}_{4}$-representability.
$\mathbb{F}_{4}$-representable matroids are characterised
by seven excluded minors in an important paper of
Geelen, Gerards, and Kapoor \cite{ExcludedMinorGF4};
see also \cite[Section~6.5]{Oxley}.
The previous examples already show that $\mathbb{Z}_{4}$-
and $\mathbb{F}_{4}$-representability diverge on uniform matroids.
The next proposition shows that this difference is also visible from
the excluded-minor viewpoint: none of the seven excluded minors
for $\mathbb{F}_{4}$-representability remains an obstruction over $\mathbb{Z}_{4}$.

\begin{prop} \label{prop:all_excluded_minors_GF4_are_Z4_representable}
  All excluded minors for the class of
  $\mathbb{F}_{4}$-representable matroids,
  namely $U_{2, 6}$, $U_{4, 6}$, $P_{6}$, $F_{7}^{-}$,
  $(F_{7}^{-})^{\ast}$, $P_{8}$ and $P_{8}^{=}$,
  are representable over $\mathbb{Z}_{4}$.
\end{prop}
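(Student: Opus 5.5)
The plan is to exhibit explicit matrices over $\mathbb{Z}_{4}$ and certify them, exploiting duality to halve the work. The seven excluded minors come in dual pairs or are self-dual: $U_{2,6}$ and $U_{4,6}$ are dual, $F_{7}^{-}$ and $(F_{7}^{-})^{\ast}$ are dual, and $P_{6}$, $P_{8}$, $P_{8}^{=}$ are self-dual. For $U_{2,6}$ I take the matrix in Example~\ref{eg:representation_of_uniform_matroids_of_rank2}. It is already in systematic form $\lbrack I_{2} \mid P \rbrack$, so its row space $C$ is free. Theorem~\ref{thm:duality_for_free} then gives $M(C^{\perp}) = M(C)^{\ast} \cong U_{4,6}$, and $C^{\perp}$ is represented by $\lbrack -P^{\top} \mid I_{4} \rbrack$ by Proposition~\ref{prop:systematic_form_for_free_codes}. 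In the same way, once $F_{7}^{-}$ is given a free representation $\lbrack I_{3} \mid P \rbrack$, its dual comes for free. Hence only $P_{6}$, $F_{7}^{-}$, $P_{8}$ and $P_{8}^{=}$ need explicit free representations, of ranks $3,3,4,4$.

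For each of these I would write down a systematic matrix over $\mathbb{Z}_{4}$. A natural first attempt is to start from a representation of a nearby matroid over $\mathbb{F}_{2}$, $\mathbb{F}_{3}$ or $\mathbb{Z}$ (for instance the real representation of $P_{6}$ or $F_{7}^{-}$), reduce it mod $4$, and then perturb entries by elements of $\mathfrak{m} = \{0,2\}$. The aim is to make exactly the required sets dependent. Since $\mathbb{Z}_{4}$ is a chain ring, Corollary~\ref{cor:rank_when_chain} says the rank function is $\mu_{R}(V_{-})$. Also, every set of more than $k$ columns is dependent, because $\mu_{R}$ is bounded by $\mu_{R}(R^{k}) = k$ (Theorem~\ref{thm:monotonic_iff_chain_ring}). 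So it suffices to check the $k$-subsets and smaller ones.

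For independence of a $k$-subset I use Proposition~\ref{prop:nonzero_minor_implies_modular_indep}: a nonzero $k \times k$ determinant mod $4$ suffices. Most bases should be certified this way, and the rare bases whose determinant vanishes mod $4$ can be handled via the Smith normal form criterion in the proof of Lemma~\ref{lem:col_modular_indep_iff_row_modular_indep}. For dependence of a prescribed circuit (a $3$-point line in rank $3$, or a $4$-point plane in rank $4$) I exhibit an explicit relation with a unit coefficient, using Lemma~\ref{lem:modular_dep_lin_comb}. Simplicity, meaning no loops and no parallel pairs, I check through Corollary~\ref{cor:two_vectors_mod_dep}. Finally, I confirm that the independence system obtained is exactly the target matroid; this also shows it is a matroid.

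The main obstacle is finding the matrices for $P_{8}$ and $P_{8}^{=}$. These are rank-$4$ matroids on $8$ elements with $70$ four-subsets to verify, and their defining hyperplane structure is delicate. Naive lifts of field representations tend to create spurious dependences, because $2 \cdot 2 = 0$ in $\mathbb{Z}_{4}$. My first attempt would be to write the matrix as $\lbrack I_{4} \mid P \rbrack$ with $P$ having the symmetric/antisymmetric shape of the standard real representation of $P_{8}$. I would choose its entries among units $\{1,3\}$ and $2$ so that the required $4\times 4$ minors of $P$ vanish mod $4$ while all others are nonzero mod $4$. Freeness reduces basis checks to minors of $P$ via the kernel computation in Theorem~\ref{thm:duality_for_free}. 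I expect to locate the entries by a small computer search and then record the matrices in the appendix for routine verification.
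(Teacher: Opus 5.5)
Your reduction is the same as the paper's. You take $U_{2,6}$ from Example~\ref{eg:representation_of_uniform_matroids_of_rank2}, give explicit $\mathbb{Z}_{4}$-matrices for $P_{6}$, $F_{7}^{-}$, $P_{8}$, $P_{8}^{=}$, and apply Theorem~\ref{thm:duality_for_free} to systematic matrices to get $U_{4,6}$ and $(F_{7}^{-})^{\ast}$. Your self-duality bookkeeping is also correct.

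\textbf{The gap is that you never produce the four matrices.} The proposition is a pure existence claim, so the witnesses \emph{are} the proof. A computer search you ``expect'' to succeed proves nothing, and nothing in your argument guarantees it will. This is exactly the non-obvious part: $F_{7}^{-}$ and $P_{8}$ are not representable over any field of characteristic two, yet the residue field of $\mathbb{Z}_{4}$ is $\mathbb{F}_{2}$. As written, you have established the statement only for $U_{2,6}$ and $U_{4,6}$.

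For $F_{7}^{-}$, your own heuristic closes the gap at once. Reduce the standard $0/1$ matrix $\lbrack I_{3} \mid \bm{e}_{1}+\bm{e}_{2},\ \bm{e}_{1}+\bm{e}_{3},\ \bm{e}_{2}+\bm{e}_{3},\ \bm{e}_{1}+\bm{e}_{2}+\bm{e}_{3} \rbrack$ modulo $4$.
\begin{itemize}
\item Every $3\times 3$ minor of a $0/1$ matrix lies in $\{-2,\dots,2\}$.
\item Hence every triple independent over $\mathbb{Q}$ has a minor that is nonzero mod $4$, and stays modular independent by Proposition~\ref{prop:nonzero_minor_implies_modular_indep}. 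This includes $\{\bm{e}_{1}+\bm{e}_{2},\bm{e}_{1}+\bm{e}_{3},\bm{e}_{2}+\bm{e}_{3}\}$, whose determinant is $-2$.
\item The other six lines keep their relations with coefficients $\pm 1$, so they remain dependent.
\item The matrix is systematic, so $(F_{7}^{-})^{\ast}$ follows by duality as well.
\end{itemize}
For $P_{6}$, $P_{8}$ and $P_{8}^{=}$ you still have to write down and check explicit matrices, such as those in Appendix~\ref{sect:appendix}.

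A secondary issue concerns your search filter for $P_{8}$ and $P_{8}^{=}$ (``required minors vanish mod $4$, all others nonzero mod $4$''). It treats a vanishing determinant as dependence, which is false over $\mathbb{Z}_{4}$. For instance, $\diag(2,2)$ has determinant $0$ but modular independent columns; see the example following Proposition~\ref{prop:nonzero_minor_implies_modular_indep}.
\begin{itemize}
\item A prescribed plane needs a relation with a unit coefficient. Equivalently, the corresponding square submatrix $P_{S,T}$ must have a zero diagonal entry in its Smith normal form.
\item Conversely, requiring nonzero determinants on all bases is sufficient but not necessary.
\item By the kernel computation in the proof of Theorem~\ref{thm:duality_for_free}, the submatrices $P_{S,T}$ that matter come in all sizes, not only $4\times 4$.
\end{itemize}
Your final verification step uses the correct tests. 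The search should use them too, or it will return spurious candidates and may miss valid ones.
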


\begin{proof}
  A $\mathbb{Z}_{4}$-representation of $U_{2,6}$ is given in
  Example~\ref{eg:representation_of_uniform_matroids_of_rank2},
  and $\mathbb{Z}_{4}$-representations of $P_{6}$, $F_{7}^{-}$, $P_{8}$, and $P_{8}^{=}$
  are listed in Appendix~\ref{sect:appendix}.
  Since each displayed matrix is in systematic form
  after a suitable permutation of columns,
  Theorem~\ref{thm:duality_for_free} yields $\mathbb{Z}_{4}$-representations
  of the dual matroids $U_{4, 6} = U_{2, 6}^{\ast}$ and $(F_{7}^{-})^{\ast}$.
\end{proof}

Thus, Proposition~\ref{prop:all_excluded_minors_GF4_are_Z4_representable}
should be read as a comparison with the $\mathbb{F}_{4}$-theory,
rather than as an excluded minor statement for $\mathbb{Z}_{4}$-representability.
Moreover, we have the following proposition from Appendix~\ref{sect:appendix}.

\begin{prop} \label{prop:non-representable}
  Non-representable matroids $F_{8}$ and $\AG(3, 2)^{\prime}$
  are freely $\mathbb{Z}_{4}$-representable.
\end{prop}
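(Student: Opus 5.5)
The plan is to exhibit, for each of $F_{8}$ and $\AG(3,2)^{\prime}$, an explicit matrix $G=\lbrack I_{4}\mid P\rbrack\in\mathbb{Z}_{4}^{4\times 8}$ with $M\lbrack G\rbrack$ isomorphic to the given matroid. Freeness is then automatic from the systematic form (Definition~\ref{dfn:representability}). The proof therefore has two parts: constructing $P$, and certifying that $M\lbrack G\rbrack$ is the intended matroid. These are the matrices that Appendix~\ref{sect:appendix} will record.

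To construct $P$, I would start from the familiar near-binary structure of both matroids. $\AG(3,2)^{\prime}$ is obtained from the binary affine geometry $\AG(3,2)=M\lbrack I_{4}\mid J-I\rbrack$ over $\mathbb{F}_{2}$ by relaxing one circuit-hyperplane. $F_{8}$ is likewise a rank-$4$, $8$-element matroid whose $4$-point planes form a known list. I would take the $0/1$ lift of the binary matrix to $\mathbb{Z}_{4}$ and perturb entries of $P$ by adding elements of $\mathfrak{m}=\langle 2\rangle$. The goal of the perturbation is to kill exactly the unwanted dependencies: for each $4$-set that must become a basis, make the corresponding $4\times 4$ submatrix have modular independent columns, while every $4$-point plane of the target stays dependent. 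If hand search fails, a computer search over the perturbations $P+2E$ with $E\in\{0,1\}^{4\times 4}$ is small ($2^{16}$ cases, fewer after row/column symmetries). If that also fails, I would fall back to a search over all of $\mathbb{Z}_{4}^{4\times 4}$ up to symmetry.

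To certify a candidate, note that $\mathbb{Z}_{4}$ is a chain ring, so Corollary~\ref{cor:rank_when_chain} gives $r_{M\lbrack G\rbrack}(X)=\mu_{R}(V_{X})$. It therefore suffices to check that $\mu_{\mathbb{Z}_{4}}(V_{X})$ agrees with the target rank function on every $X\subseteq\lbrack 8\rbrack$. In practice it is enough to check the independent and dependent sets of size at most $4$, because subsets of size $\ge 5$ are dependent by Theorem~\ref{thm:monotonic_iff_chain_ring}. For a $4$-subset $B$ the relevant submatrix is square. By Lemma~\ref{lem:col_modular_indep_iff_row_modular_indep} and its proof, $B$ is independent iff the Smith normal form of $G_{B}$ has no zero diagonal entry. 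A sufficient condition is $\det G_{B}\neq 0$, by Proposition~\ref{prop:nonzero_minor_implies_modular_indep}; since $\det G_{B}$ is nonzero exactly when the determinant is $1$, $2$ or $3$ modulo $4$, this covers most bases quickly, and only the remaining cases need an SNF computation. Smaller sets can be handled through the block reduction used in the proof of Theorem~\ref{thm:duality_for_free}. Alternatively, one can list circuits via Proposition~\ref{prop:circuits}, as minimal supports of vectors in $\psi(C^{\perp})$ using the parity-check matrix $\lbrack -P^{\top}\mid I_{4}\rbrack$.

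I expect the main obstacle to be the construction step, not the verification. The perturbations must simultaneously keep every required $4$-point plane dependent and remove the extra binary dependency. Non-representability of both matroids over every field shows that no "generic" choice works, so the search and the case bookkeeping are where the effort lies. Once the matrices are fixed, the verification is a finite check of $\binom{8}{4}=70$ quadruples plus smaller sets, which I would state in the appendix as a table of circuits.
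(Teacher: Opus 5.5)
Your route is the paper's route. There, the proposition rests entirely on the explicit $\mathbb{Z}_{4}$-matrices in the appendix, which have identity blocks on $\{c,d,e,g\}$ for $F_{8}$ and on $\{a,c,d,e\}$ for $\AG(3,2)'$; the modular-independence checks are omitted as routine. Your certification scheme is a correct way to carry out those checks: all $5$-sets are dependent because the rank is at most $4$, and a $4$-set is independent iff the Smith form of $G_{B}$ has no zero diagonal entry, with $\det G_{B}\neq 0$ as a quick sufficient test. The gap is that, for an existence statement like this, the witness \emph{is} the proof, and you never produce one. Worse, the construction you actually propose cannot produce one.

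\textbf{Why your search fails.} Let $G$ be any $4\times 8$ matrix over $\mathbb{Z}_{4}$ whose reduction mod $2$ represents $\AG(3,2)$. By uniqueness of binary representations and invertible row operations, $G=\lbrack I_{4}\mid J-I+2E\rbrack$ up to relabelling, which is precisely your search space.
\begin{itemize}
\item Every $3$-set, and every $4$-set other than the $14$ planes, stays independent, because a relation with a unit coefficient reduces to a nontrivial binary one.
\item A plane $H$ stays dependent iff $\det G_{H}\equiv 0 \pmod 4$, since its Smith form is $\diag(1,1,1,d)$ with $d\in\{0,2\}$.
\item Write $A$ for the plain $0/1$ lift of $G_{H}$ and $X$ for the corresponding columns of $E$. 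Then $\det(A+2X)\equiv\det A+2\,\bm{w}^{\top}X\bm{u}\pmod 4$, where $\bm{u}=\bm{1}$ and $\bm{w}$ span the right and left kernels of $A$ modulo $2$.
\item From this formula, the plain $0/1$ lift relaxes exactly the four planes $\{\bm{e}_{i}\}\cup\{\bm{1}-\bm{e}_{j} : j\neq i\}$, whose determinants are $\pm 2$.
\item Changing any single entry of $E$ toggles the status of exactly four planes.
\end{itemize}
Hence every such lift relaxes an even number of planes. $\AG(3,2)'$ needs exactly one relaxed plane, so it never occurs, and your $2^{16}$-case search comes back empty. A finer version of the same linear bookkeeping also rules out relaxing exactly two planes that meet in a line. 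That is exactly what the paper's $F_{8}$ matrix realises: $\AG(3,2)$ with the planes $\{c,d,g,h\}$ and $\{e,f,g,h\}$ relaxed.

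\textbf{How the paper escapes.} It leaves the family of binary lifts altogether.
\begin{itemize}
\item For $\AG(3,2)'$ it \emph{scales} the binary column $(1,1,1,0)^{\top}$ by $\theta=2$ instead of perturbing it additively, taking $b=(2,2,2,0)^{\top}$. This makes the three determinant-$\pm 2$ planes through $b$ dependent again and leaves exactly one relaxed plane. With the paper's own labels, that plane is the face $\{e,f,g,h\}$, while $\{a,c,f,h\}$ is dependent ($a-c+f-h=\bm{0}$). So the matrix matches the drawn cube only up to isomorphism, which is all the proposition needs.
\item For $F_{8}$ it uses the two columns $(2,0,2,2)^{\top}$ and $(0,2,2,2)^{\top}$, both in $2\mathbb{Z}_{4}^{4}$.
\end{itemize}
Until you write down such matrices and run your check on them, only your unexecuted fallback search over all of $\mathbb{Z}_{4}^{4\times 4}$ remains, and that is not yet a proof.
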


The matrices in Example~\ref{eg:representation_of_uniform_matroids_of_rank2} and Appendix~\ref{sect:appendix}
are intended as explicit witnesses of representability.
Verifying that a displayed matrix realises the claimed matroid
amounts to a routine check of modular (in)dependence
on small subsets of columns; we omit these checks in the interest of space.
As a representative example, we include one verification in full:
Theorem~\ref{thm:Vamos_over_Z8} proves that the well-known smallest
non-representable matroid $V_{8}$ is freely $\mathbb{Z}_{8}$-representable.

\begin{thm} \label{thm:Vamos_over_Z8}
  The V\'{a}mos matroid $V_{8}$ is freely $\mathbb{Z}_{8}$-representable.
\end{thm}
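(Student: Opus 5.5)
The plan is to exhibit an explicit matrix $G = \lbrack I_{4} \mid P \rbrack \in \mathbb{Z}_{8}^{4 \times 8}$, with $P \in \mathbb{Z}_{8}^{4\times 4}$, and to verify that $M\lbrack G \rbrack \cong V_{8}$. Label the ground set $\{a,a',b,b',c,c',d,d'\}$ so that $V_{8}$ has rank $4$, every set of size at most $3$ is independent, and the only dependent $4$-sets are the five ``planes''
\[
\{a,a',b,b'\},\ \{a,a',c,c'\},\ \{a,a',d,d'\},\ \{b,b',c,c'\},\ \{c,c',d,d'\}.
\]
In particular $\{b,b',d,d'\}$ is a basis, which is what rules out representations over fields. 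I would place the identity block on a basis, say $\{a,b,c,d\}$, and put $a',b',c',d'$ in $P$. Then $G$ generates a free code, and the representation is free in the sense of Definition~\ref{dfn:representability}.

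The first step is a reduction of the verification to $4$-subsets. Since $\mathbb{Z}_{8}$ is a chain ring, Lemma~\ref{lem:modular_indep_minimal_gen} and Theorem~\ref{thm:monotonic_iff_chain_ring} show that any modular independent set of columns has size at most $\mu_{R}(\mathbb{Z}_{8}^{4}) = 4$. Hence every set of size at least $5$ is dependent, as it should be. Every $3$-subset of the ground set of $V_{8}$ lies in some basis of $V_{8}$, and modular independence is hereditary (Proposition~\ref{prop:indep_sys_from_modular_indep}). So it suffices to check two things:
\begin{itemize}
\item each of the $65$ bases of $V_{8}$ is modular independent in $G$;
\item each of the five planes is modular dependent in $G$.
\end{itemize}
For this I reuse the kernel computation in the proof of Theorem~\ref{thm:duality_for_free}. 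A $4$-set $B = P_{0} \sqcup (4+T)$ is modular independent if and only if the square submatrix $P_{S,T}$ has modular independent columns, where $S = \lbrack 4 \rbrack \setminus P_{0}$. By the Smith normal form argument of Lemma~\ref{lem:col_modular_indep_iff_row_modular_indep}, this holds if and only if the Smith normal form of $P_{S,T}$ has no zero diagonal entry. A nonzero minor suffices for independence by Proposition~\ref{prop:nonzero_minor_implies_modular_indep}. Dependence is certified by exhibiting a relation with a unit coefficient, for instance one column of $P_{S,T}$ lying in the span of the others (Lemma~\ref{lem:modular_dep_lin_comb}). The whole check therefore becomes a finite list of $k\times k$ submatrices of $P$, for $0\le k\le 4$, each tested by a determinant or a Smith normal form.

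The second step is to choose $P$. Each plane imposes dependence on a specific square submatrix of $P$. With the identity on $\{a,b,c,d\}$, the plane $\{a,a',b,b'\}$ corresponds to the $2\times 2$ block of $P$ with rows $\{c,d\}$ and columns $\{a',b'\}$; the planes $\{a,a',c,c'\}$, $\{a,a',d,d'\}$, $\{b,b',c,c'\}$ and $\{c,c',d,d'\}$ give analogous $2\times 2$ blocks. The key requirement is that the block for $\{b,b',d,d'\}$, with rows $\{a,c\}$ and columns $\{b',d'\}$, stays modular independent. Over a field the five vanishing $2\times2$ determinants force the sixth to vanish. Over $\mathbb{Z}_{8}$ I would instead make the five planes dependent through genuine relations with unit coefficients, so that one column of the block is a multiple of the other, while arranging the sixth block to have determinant $4$ (or $2$), which is nonzero because of the zero divisors. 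I would first try $P$ with unit entries, so that no $1\times 1$ block fails, and with the five forced $2\times 2$ blocks of rank-one shape $\begin{pmatrix} x & \lambda x\\ y & \lambda y\end{pmatrix}$. The remaining free entries are then tuned, by hand or by a small search over $\mathbb{Z}_{8}$, so that all other $2\times2$, $3\times3$ minors and $\det P$ are nonzero. Once found, $P$ would be recorded as an explicit matrix.

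The main obstacle is this construction step: the five rank-one constraints are exactly those that collapse the sixth minor over a field, so the choice must exploit $2$-torsion carefully. Simultaneously, all $60$ other basis submatrices must avoid a zero in their Smith normal form. I would handle the $3\times 3$ and $4\times 4$ cases by checking that the determinant is nonzero, which is sufficient. Only for the five planes do I need the finer Smith normal form or explicit-relation argument. Once $P$ is fixed, the remaining verification is the routine finite check described above, which I would present as a table of minors.
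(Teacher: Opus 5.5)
\paragraph{The gap: no witness matrix.} Your reduction is sound and matches the paper's verification scheme. Rank at most $4$ kills all $5$-sets. Every $3$-set lies in a basis, so heredity handles the small sets (the paper checks $3\times 3$ minors directly instead). Bases are certified by nonzero minors of the blocks $P_{S,T}$, and planes by relations with a unit coefficient. The problem is that the theorem is an existence statement whose whole content is the witness matrix, and you never produce one.

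\paragraph{Your starting heuristic cannot work.} The search you propose to start from is empty. Suppose every entry of $P$ is a unit. Then a $2\times 2$ block is modular dependent exactly when its determinant vanishes: two unit-entry columns are unit multiples of each other iff the determinant is $0$, and a nonzero determinant gives independence by Proposition~\ref{prop:nonzero_minor_implies_modular_indep}. Unit scalings of rows and columns do not change $M[G]$, so you may assume row $a$ and column $a'$ are all ones:
\[
P=\begin{pmatrix}1&1&1&1\\ 1&p&q&r\\ 1&s&t&u\\ 1&v&w&z\end{pmatrix}
\qquad(\text{rows } a,b,c,d;\ \text{columns } a',b',c',d').
\]
The five planes then force $v=s$, $w=q$, $u=r$, $w=v$ and $r=q$. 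Hence $u=s$, and the $\{b,b',d,d'\}$ block $\begin{pmatrix}1&1\\ s&u\end{pmatrix}$ is singular. This is exactly the field argument, which only divides by entries. So a determinant $2$ or $4$ on the sixth block is impossible unless zero divisors occur as entries of $P$ itself.

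\paragraph{How to repair it.} The missing idea is to place non-units such as $4$ in $P$. Since $4\varepsilon=4$ for every odd $\varepsilon$, a rank-one relation in one block no longer propagates to the others. The paper does this. In its labelling, with pairs $\{a,b\},\{c,d\},\{e,f\},\{g,h\}$, it puts the identity on $\{a,b,c,e\}$ and writes down five explicit unit relations. It then asserts, by a direct computation it omits, that all other $4$-subsets have nonzero determinant.

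In your coordinates the following works; it is the paper's matrix row-reduced onto its basis $\{a,c,e,g\}$, which has unit determinant:
\[
P=\begin{pmatrix}2&3&3&5\\ 4&5&4&4\\ 5&5&6&1\\ 3&3&3&6\end{pmatrix}.
\]
\begin{itemize}
\item The five plane blocks are $\begin{pmatrix}5&5\\3&3\end{pmatrix}$, $\begin{pmatrix}4&4\\3&3\end{pmatrix}$, $\begin{pmatrix}4&4\\5&1\end{pmatrix}$, $\begin{pmatrix}3&3\\3&3\end{pmatrix}$ and $\begin{pmatrix}3&5\\4&4\end{pmatrix}$. In each, one column is a unit multiple of the other; for the third and fifth, $5\cdot(4,1)=(4,5)$ and $7\cdot(3,4)=(5,4)$.
\item The $\{b,b',d,d'\}$ block $\begin{pmatrix}3&5\\5&1\end{pmatrix}$ has determinant $2$.
\item All sixteen entries, the remaining thirty-one $2\times 2$ minors, all sixteen $3\times 3$ minors, and $\det P=6$ are nonzero in $\mathbb{Z}_{8}$.
\end{itemize}
With this witness, your reduction completes the proof.
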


\begin{proof}
  Set $E \coloneqq \{ a, b, c, d, e, f, g, h \}$.
  Throughout this proof, we use the standard labelling of the V\'{a}mos matroid $V_{8}$ on $E$
  in which the $4$-circuits are
  \begin{equation} \label{eq:4circuits_of_V8} 
    \{ a, b, c, d \},\; \{ a, b, e, f \},\; \{ c, d, e, f \},\;
    \{ a, b, g, h \},\; \{ c, d, g, h \},
  \end{equation}
  and the $5$-circuits are precisely the $5$-subsets $D \subseteq E$
  that contain none of these five $4$-sets.
  Let $G \in (\mathbb{Z}_{8}^{4})^{E}$ be the following matrix:
  \begin{table}[H]
    \begin{tabular}{cc}
      \begin{minipage}{.6\hsize}
        \begin{equation*}
          G \coloneqq \bordermatrix{
            & a & b & c & d & e & f & g & h \cr
            & 1 & 0 & 0 & 1 & 0 & 1 & 2 & 1 \cr
            & 0 & 1 & 0 & 1 & 0 & 1 & 3 & 2 \cr
            & 0 & 0 & 1 & 1 & 0 & 0 & 1 & 7 \cr
            & 0 & 0 & 0 & 0 & 1 & 1 & 4 & 4 
          } \in (\mathbb{Z}_{8}^{4})^{E}
        \end{equation*}
      \end{minipage}
      &
      \begin{minipage}{.3\hsize}
        \centering
        \begin{tikzpicture}[scale=0.45, thick, line cap=round, line join=round]
          \coordinate (a) at (0.00,  0.00);
          \coordinate (d) at (3.63,  0.00);
          \coordinate (b) at (3.62,  2.31);
          \coordinate (e) at (1.83,  2.90);
          \coordinate (f) at (5.53,  5.14);
          \coordinate (c) at (7.32,  2.29);
          \coordinate (h) at (5.51, -0.60);
          \coordinate (g) at (1.82, -2.86);

          \foreach \U/\V in {a/e,e/f,f/c,c/h,h/g,g/a,a/d,d/g,e/d,a/b,b/c,b/h,c/d,f/b}{
            \draw (\U) -- (\V);
          }

          \fill (a) circle (5pt) node[left]        {$a$};
          \fill (e) circle (5pt) node[above left]  {$e$};
          \fill (b) circle (5pt) node[above left]  {$b$};
          \fill (c) circle (5pt) node[right]       {$c$};
          \fill (d) circle (5pt) node[below]       {$d$};
          \fill (f) circle (5pt) node[above right] {$f$};
          \fill (g) circle (5pt) node[below]       {$g$};
          \fill (h) circle (5pt) node[below right] {$h$};
        \end{tikzpicture}
      \end{minipage}
    \end{tabular}
  \end{table}
  We verify that the associated independence system $M\lbrack G \rbrack$
  is $V_{8}$.

  First, the submatrix $G_{\{ a, b, c, e \}}$ is the $4\times 4$
  identity matrix, so $\{ a, b, c, e \}$ is modular independent.
  On the other hand, since $\mathbb{Z}_{8}$ is a commutative chain ring,
  any modular independent subset of columns in $\mathbb{Z}_{8}^{4}$
  has cardinality at most $4$ (Lemma~\ref{lem:modular_indep_minimal_gen}
  and Theorem~\ref{thm:monotonic_iff_chain_ring}).
  Hence $r(M\lbrack G \rbrack) = 4$.
  In particular, after permuting the columns so that $\{ a, b, c, e\}$
  forms the identity block, the matrix is in systematic form and
  thus gives a \emph{free} $\mathbb{Z}_{8}$-representation.

  Next, we claim that $M\lbrack G \rbrack$ has no circuits of size at most $3$.
  Indeed, a direct check shows that for every $3$-subset $X \subseteq E$,
  the $4 \times 3$ submatrix $G_{X}$ has a non-zero $3 \times 3$ minor;
  therefore $X$ is modular independent by
  Proposition~\ref{prop:nonzero_minor_implies_modular_indep}.

  We now determine the $4$-circuits.
  Write $\bm{g}_{i} \coloneqq G(i)$ for each $i \in E$.
  In $R^{4}$, the following relations hold:
  \begin{gather*}
    \bm{g}_{d} = \bm{g}_{a} + \bm{g}_{b} + \bm{g}_{c}, \qquad
    \bm{g}_{f} = \bm{g}_{a} + \bm{g}_{b} + \bm{g}_{e}, \qquad
    \bm{g}_{c} + \bm{g}_{f} = \bm{g}_{d} + \bm{g}_{e}, \\
    \bm{g}_{a} + 5\bm{g}_{g} + 5 \bm{g}_{h} = \bm{g}_{b}, \qquad
    \bm{g}_{c} + \bm{g}_{d} + \bm{g}_{h} = \bm{g}_{g}.
  \end{gather*}
  Since the displayed coefficients are units of $\mathbb{Z}_{8}$, each of
  \begin{equation*}
    \{ a, b, c, d \},\; \{ a, b, e, f \},\; \{ c, d, e, f \},\; \{ a, b, g, h \},\; \{ c, d, g, h \}
  \end{equation*}
  is modular dependent (Lemma~\ref{lem:modular_dep_lin_comb}),
  and by the previous paragraph it is a $4$-circuit.

  On the other hand, for every other $4$-subset $X\subseteq E$,
  a direct computation shows that $\det(G_{X})\neq 0$ over $\mathbb{Z}_{8}$.
  Hence $X$ is modular independent by
  Proposition~\ref{prop:nonzero_minor_implies_modular_indep}.
  Therefore, the $4$-circuits of $M\lbrack G \rbrack$ are \emph{exactly}
  $\{ a, b, c, d \}$, $\{ a, b, e, f \}$, $\{ c, d, e, f \}$, 
  $\{ a, b, g, h \}$, $\{ c, d, g, h \}$.

  Finally, let $D\subseteq E$ with $\abs{D} = 5$.
  If $D$ contains one of the above $4$-circuits, then $D$ is dependent
  but not minimal, hence not a circuit.
  If $D$ contains none of them, then every $4$-subset of $D$
  is independent, while $D$ itself is dependent because $r(M\lbrack G \rbrack) = 4$.
  Hence such $D$ is a $5$-circuit.
  Therefore the $5$-circuits of $M\lbrack G \rbrack$ are precisely
  the $5$-subsets of $E$ containing none of
  $\{ a, b, c, d \}$, $\{ a, b, e, f \}$, $\{ c, d, e, f \}$,
  $\{ a, b, g, h \}$, $\{ c, d, g, h \}$, exactly as in \eqref{eq:4circuits_of_V8}.

  We conclude that $M\lbrack G \rbrack = V_{8}$,
  and thus $V_{8}$ is freely $\mathbb{Z}_{8}$-representable.
\end{proof} 
\appendix
\section{Appendix} \label{sect:appendix}

In this appendix we list explicit representations of
several well-known matroids; see \cite[Appendix]{Oxley}
for further details on each of them.

\subsection*{$P_{6}$}

$\mathbb{F}$-representable if and only if $\abs{\mathbb{F}} \geq 5$.
An excluded minor for $\mathbb{F}_{4}$-representability.

\begin{table}[H]
  \begin{tabular}{cc}
    \begin{minipage}{.45\hsize}
      \centering
      \begin{tikzpicture}[auto]
        \fill (-1.5, 0)     coordinate (a) circle (3pt) node [below=2mm]   {$a$};
        \fill ( 0, 0)       coordinate (b) circle (3pt) node [below=2mm]   {$b$};
        \fill ( 1.5, 0)     coordinate (c) circle (3pt) node [below=2mm]   {$c$};
        \fill (-1.06, 1.06) coordinate (d) circle (3pt) node [above left]  {$d$};
        \fill ( 0, 1.5)     coordinate (e) circle (3pt) node [above=1mm]   {$e$};
        \fill ( 1.06, 1.06) coordinate (f) circle (3pt) node [above right] {$f$};
        \draw (a) -- (c);
      \end{tikzpicture}
    \end{minipage}
    &
    \begin{minipage}{.45\hsize}
      \begin{equation*}
        \bordermatrix{
          & a & b & c & d & e & f \cr
          & 1 & 0 & 1 & 0 & 1 & 3 \cr
          & 0 & 1 & 1 & 0 & 2 & 1 \cr
          & 0 & 0 & 0 & 1 & 1 & 1
        } \quad \text{over } \mathbb{Z}_{4}
      \end{equation*}
    \end{minipage}
  \end{tabular}
\end{table}

\subsection*{$F_{7}^{-}$}

$\mathbb{F}$-representable if and only if the characteristic of $\mathbb{F}$ is not two.
An excluded minor for $\mathbb{F}_{4}$-representability.

\begin{table}[H]
  \begin{tabular}{cc}
    \begin{minipage}{.45\hsize}
      \centering
      \begin{tikzpicture}[scale=2]
        \coordinate (A) at ( 90:1);
        \coordinate (L) at (210:1);
        \coordinate (R) at (330:1);
        \coordinate (LR) at ($(L)!0.5!(R)$);
        \coordinate (AR) at ($(R)!0.5!(A)$);
        \coordinate (AL) at ($(A)!0.5!(L)$);
        \coordinate (O) at (0,0);           

        \draw (A)--(L)--(R)--cycle;
        \draw (A)--(LR);
        \draw (L)--(AR);
        \draw (R)--(AL);

        \fill (A) circle (1.4pt) node[above] {$a$};
        \fill (LR) circle (1.4pt) node[below] {$b$};
        \fill (O) circle (1.4pt) node[above right] {$c$};
        \fill (AL) circle (1.4pt) node[left] {$d$};
        \fill (L) circle (1.4pt) node[below left] {$e$};
        \fill (AR) circle (1.4pt) node[right] {$f$};
        \fill (R) circle (1.4pt) node[below right] {$g$};
      \end{tikzpicture}
    \end{minipage}
    &
    \begin{minipage}{.45\hsize}
      \begin{equation*}
        \bordermatrix{
          & a & b & c & d & e & f & g \cr
          & 1 & 0 & 1 & 0 & 1 & 0 & 1 \cr
          & 0 & 1 & 1 & 0 & 0 & 1 & 1 \cr
          & 0 & 0 & 0 & 1 & 1 & 3 & 1  
        } \quad \text{over } \mathbb{Z}_{4}
      \end{equation*}
    \end{minipage}
  \end{tabular}
\end{table}

\subsection*{$P_{8}^{=}$}

$\mathbb{F}$-representable if and only if $\abs{\mathbb{F}} \geq 5$.
An excluded minor for $\mathbb{F}_{4}$-representability.

\begin{table}[H]
  \begin{tabular}{cc}
    \begin{minipage}{.45\hsize}
      \begin{equation*}
        \bordermatrix{
          & a & b & d & e & g & h & f & c \cr
          & 1 & 0 & 0 & 0 & 1 & 1 & 1 & 1 \cr
          & 0 & 1 & 0 & 0 & 1 & 1 & 4 & 2 \cr
          & 0 & 0 & 1 & 0 & 1 & 2 & 0 & 2 \cr
          & 0 & 0 & 0 & 1 & 1 & 4 & 1 & 0
        } \quad \text{over } \mathbb{F}_{5},
      \end{equation*}
    \end{minipage}
    &
    \begin{minipage}{.45\hsize}
      \begin{equation*}
        \bordermatrix{
          & a & b & c & d & e & f & g & h \cr
          & 1 & 0 & 0 & 1 & 0 & 2 & 1 & 0 \cr
          & 0 & 1 & 0 & 2 & 0 & 1 & 0 & 3 \cr
          & 0 & 0 & 1 & 1 & 0 & 0 & 3 & 1 \cr
          & 0 & 0 & 0 & 0 & 1 & 1 & 1 & 1
        } \quad \text{over } \mathbb{Z}_{4}
      \end{equation*}
    \end{minipage}
  \end{tabular}
\end{table}

\subsection*{$P_{8}$}

$\mathbb{F}$-representable if and only if the characteristic of $\mathbb{F}$ is not two.
An excluded minor for $\mathbb{F}_{4}$-representability.

\begin{table}[H]
  \begin{tabular}{cc}
    \begin{minipage}{.45\hsize}
      \centering
      \begin{tikzpicture}[auto]
        \fill (-1.213,-1.2) coordinate (a) circle (3pt) node [below left]  {$a$}; \fill ( 0.8, 0.2)   coordinate (b) circle (3pt) node [below left]  {$b$}; \fill (0, 1.4)      coordinate (c) circle (3pt) node [above left]  {$c$}; \fill (-1.213, 0.8) coordinate (d) circle (3pt) node [above left]  {$d$}; \fill (0, 0)        coordinate (e) circle (3pt) node [below]       {$e$}; \fill ( 1.213, 0.8) coordinate (f) circle (3pt) node [above right] {$f$}; \fill (-0.8, 0.2)   coordinate (g) circle (3pt) node [below right] {$g$}; \fill ( 1.213,-1.2) coordinate (h) circle (3pt) node [below right] {$h$}; 

        \draw (a) to (d);
        \draw (a) to (e);
        \draw[dashed] (a) to (g);
        \draw (a) to (h);
        \draw[dashed] (b) to (c);
        \draw[dashed] (b) to (f);
        \draw[dashed] (b) to (g);
        \draw[dashed] (b) to (h);
        \draw (c) to (d);
        \draw (c) to (f);
        \draw[dashed] (c) to (g);
        \draw (d) to (e);
        \draw[dashed] (d) to (g);
        \draw (e) to (f);
        \draw (e) to (h);
        \draw (f) to (h);
      \end{tikzpicture}
    \end{minipage}
    &
    \begin{minipage}{.45\hsize}
      \begin{equation*}
        \bordermatrix{
          & a & b & c & d & e & f & g & h \cr
          & 1 & 1 & 0 & 0 & 0 & 0 & 3 & 1 \cr
          & 0 & 2 & 0 & 1 & 0 & 1 & 0 & 2 \cr
          & 0 & 0 & 1 & 1 & 0 & 0 & 1 & 1 \cr
          & 0 & 0 & 0 & 0 & 1 & 1 & 1 & 1 
        } \quad \text{over } \mathbb{Z}_{4}
      \end{equation*}
    \end{minipage}
  \end{tabular}
\end{table}

\subsection*{$F_{8}$}

A smallest non-representable and non-algebraic matroid.

\begin{table}[H]
  \begin{tabular}{cc}
    \begin{minipage}{.45\hsize}
      \centering
      \begin{tikzpicture}[
          scale=0.8,
          x=1cm,y=1cm,
          line cap=round,line join=round,
          line width=0.5pt
        ]
        \coordinate (BL)  at (0.00,0.00);
        \coordinate (TL)  at (0.00,4.09);
        \coordinate (TOP) at (4.10,4.60);
        \coordinate (TR)  at (8.15,4.07);
        \coordinate (BR)  at (8.15,0.05);
        \coordinate (BOT) at (4.05,0.53);

        \coordinate (SpTop) at (4.09,4.12);
        \coordinate (SpMid) at (4.13,2.93);
        \coordinate (SpBot) at (4.02,0.94);

        \coordinate (LO) at (1.11,2.07);
        \coordinate (LU) at (2.76,3.21);
        \coordinate (LI) at (3.08,2.62);
        \coordinate (LL) at (2.42,1.56);

        \coordinate (RU) at (5.43,3.21);
        \coordinate (RI) at (5.11,2.62);
        \coordinate (RL) at (5.78,1.56);
        \coordinate (RO) at (7.08,2.07);

        \draw (TL)--(TOP)--(TR)--(BR)--(BOT)--(BL)--cycle;
        \draw[gray] (TOP)--(SpMid);
        \draw      (SpMid)--(BOT);

        \draw (LO)--(LU);
        \draw (LO)--(LI);
        \draw (LO)--(LL);
        \draw (LU)--(LI);
        \draw (LI)--(LL);
        \draw (LU) to[out=-140,in=-110] (LL); \draw (LL) to[out=-10,in=-120] (SpMid); 

        \draw (RO)--(RU);
        \draw (RO)--(RI);
        \draw (RO)--(RL);
        \draw (RU)--(RI);
        \draw (RI)--(RL);

        \draw (LU)--(SpTop);
        \draw (LI)--(SpTop);
        \draw (RU)--(SpTop);
        \draw (RI)--(SpTop);

        \draw (LI)--(SpMid);
        \draw (RI)--(SpMid);

        \draw (LI)--(SpBot);
        \draw (LL)--(SpBot);
        \draw (RI)--(SpBot);
        \draw (RL)--(SpBot);

        \def\r{5pt}
        \fill (LO) circle[radius=\r];
        \fill (LU) circle[radius=\r];
        \fill (LI) circle[radius=\r];
        \fill (LL) circle[radius=\r];
        \fill (RU) circle[radius=\r];
        \fill (RI) circle[radius=\r];
        \fill (RL) circle[radius=\r];
        \fill (RO) circle[radius=\r];

        \def\off{3pt} \node[font=\scriptsize,anchor=west, xshift= \off] at (RO) {$a$};
        \node[font=\scriptsize,anchor=north,yshift=-\off] at (RI) {$b$};
        \node[font=\scriptsize,anchor=east, xshift=-\off] at (LO) {$c$};
        \node[font=\scriptsize,anchor=north,yshift=-\off] at (LI) {$d$};
        \node[font=\scriptsize,anchor=south,yshift= \off] at (LU) {$e$};
        \node[font=\scriptsize,anchor=north,yshift=-\off] at (LL) {$f$};
        \node[font=\scriptsize,anchor=south,yshift= \off] at (RU) {$g$};
        \node[font=\scriptsize,anchor=north,yshift=-\off] at (RL) {$h$};
      \end{tikzpicture}
    \end{minipage}
    &
    \begin{minipage}{.45\hsize}
      \begin{equation*}
        \bordermatrix{
          & a & b & c & d & e & f & g & h \cr
          & 2 & 0 & 1 & 0 & 0 & 1 & 0 & 1 \cr
          & 0 & 2 & 0 & 1 & 0 & 1 & 0 & 3 \cr
          & 2 & 2 & 0 & 0 & 1 & 1 & 0 & 2 \cr
          & 2 & 2 & 0 & 0 & 0 & 0 & 1 & 1 
        } \quad \text{over } \mathbb{Z}_{4}
      \end{equation*}
    \end{minipage}
  \end{tabular}
\end{table}

\subsection*{$\AG(3, 2)^{\prime}$}

A smallest non-representable and non-algebraic matroid.
$4$-point planes are the six faces of the cube,
the following six diagonal planes:
\begin{equation*}
  \{ a, b, g, h \}, \; \{ c, d, e, f \}, \; \{ a, d, f, g \}, \;
  \{ b, c, e, h \}, \; \{ a, c, e, g \}, \; \{ b, d, f, h \},
\end{equation*}
and one twisted plane, $\{ b, d, e, g \}$.

\begin{table}[H]
  \begin{tabular}{cc}
    \begin{minipage}{.45\hsize}
      \centering
      \begin{tikzpicture}[
        x=1cm,y=1cm,
        line cap=round,line join=round,
        line width=1pt
      ]
        \coordinate (v1) at (0,0);
        \coordinate (v2) at (3,0);
        \coordinate (v3) at (4,1);
        \coordinate (v4) at (1,1);
        \coordinate (v5) at (0,3);
        \coordinate (v6) at (3,3);
        \coordinate (v7) at (4,4);
        \coordinate (v8) at (1,4);

        \def\gap{0.18}

        \draw (v1)--(v2);
        \draw (v2)--(v6);
        \draw (v6)--(v5);
        \draw (v5)--(v1);

        \draw (v1)--(v4);
        \draw (v2)--(v3);
        \draw (v5)--(v8);
        \draw (v6)--(v7);

        \draw (v8)--(v7);
        \draw (v7)--(v3);

        \draw (v8)--(1,3+\gap);
        \draw (1,3-\gap)--(v4);

        \draw (v4)--(3-\gap,1);
        \draw (3+\gap,1)--(v3);

        \def\r{5pt}
        \fill (v2) circle[radius=\r] node[below=3pt]       {$a$};
        \fill (v6) circle[radius=\r] node[right=3pt]       {$b$};
        \fill (v5) circle[radius=\r] node[left=3pt]        {$c$};
        \fill (v1) circle[radius=\r] node[below left=2pt]  {$d$};
        \fill (v3) circle[radius=\r] node[right=3pt]       {$e$};
        \fill (v7) circle[radius=\r] node[above right=2pt] {$f$};
        \fill (v8) circle[radius=\r] node[above left=2pt]  {$g$};
        \fill (v4) circle[radius=\r] node[left=3pt]        {$h$};
      \end{tikzpicture}
    \end{minipage}
    &
    \begin{minipage}{.45\hsize}
      \begin{equation*}
        \bordermatrix{
          & a & b & c & d & e & f & g & h \cr
          & 1 & 2 & 0 & 0 & 0 & 0 & 1 & 1 \cr
          & 0 & 2 & 1 & 0 & 0 & 1 & 1 & 0 \cr
          & 0 & 2 & 0 & 1 & 0 & 1 & 0 & 1 \cr
          & 0 & 0 & 0 & 0 & 1 & 1 & 1 & 1 
        } \quad \text{over } \mathbb{Z}_{4}
      \end{equation*}
    \end{minipage}
  \end{tabular}
\end{table} 

\subsection*{Declaration of generative AI and AI-assisted technologies in writing process}

During the preparation of this work, the authors used ChatGPT,
a large language model developed by OpenAI,
in order to improve the grammar, readability, clarity and correctness
of the English in the manuscript and to find and correct typos.
After using this tool, the authors reviewed and edited the content
as needed and take full responsibility for the content of the publication.

\subsection*{Acknowledgements}

This work was supported by the Japan Society for the Promotion of Science
(JSPS) KAKENHI Grants JP25K17298 and JP25K07103.

  \bibliography{reference}
  \bibliographystyle{abbrv}
\end{document}